\newcommand{\ld}{\lambda}
\newcommand{\Ga}{\Gamma}
\newcommand{\ga}{\gamma}
\newcommand{\al}{\alpha}
\newcommand{\de}{\delta}
\newcommand{\ep}{\epsilon}
\newcommand{\Om}{\Omega}
\newcommand{\tht}{\theta}
\newcommand{\beqq}{\begin{equation*}}
\newcommand{\eeqq}{\end{equation*}}
\newcommand{\beq}{\begin{equation}}
\newcommand{\eeq}{\end{equation}}
\newtheorem{theorem}{Theorem}[section]
\newtheorem{lemma}{Lemma}[section]
\newtheorem{definition}{Definition}[section]
\newtheorem{corollary}{Corollary}[section]
\newtheorem{question}{Question}[section]
\newtheorem{remark}{Remark}[section]
\numberwithin{equation}{section}
\begin{document}
\title[A transcendental Julia set]{A generalized family of transcendental functions with one dimensional Julia sets}



\maketitle

\begin{center}
Xu Zhang\footnote{
Email: xu$\_$zhang$\_$sdu@mail.sdu.edu.cn\\
2010 Mathematics Subject Classification. 37F10, 30D05, 37F35, 37C45.\\
Key words and phrases: Fatou set, Hausdorff dimension, Julia set, packing dimension, transcendental function
}\\
Department of Mathematics\\
Shandong University, Weihai, Shandong, 264209,  China\\
\end{center}

\begin{abstract}
A generalized family of transcendental (non-polynomial entire) functions is constructed, where the Hausdorff dimension and the packing dimension of the Julia sets are equal to one. Further, there exist multiply connected wandering domains, the dynamics can be completed described, and for any $s\in(0,+\infty]$, there is a function taken from this family with the order of growth $s$. Baker proved that the Hausdorff dimension of the transcendental function is no less than one in 1975, the minimum value was obtained via an elegant construction by Bishop in 2018. The order of growth is zero in Bishop's construction, the family of functions here have arbitrarily positive or even infinite order of growth.
\end{abstract}

\section{Introduction}

A holomorphic (analytic) function $f:\mathbb{C}\to\mathbb{C}$ defined on the whole complex plane is called entire. The entire functions include polynomials and transcendental (non-polynomial) functions. Examples of transcendental functions include the exponential function, the logarithm, and the trigonometric functions.
The dynamical behavior of $f$ is the study of the iteration of $f$ on the complex plane. Let $f^n$ denote the $n$-th iterate of $f$, $n\in\mathbb{N}$. The Fatou set $\mathcal{F}(f)$ of an entire function $f$ is the set where the
iterates $f^n$
form a normal family (sequences having convergent subsequences in the sense of Montel, i.e., $\widehat{\mathbb{C}}=\mathbb{C}\cup\{\infty\}$ with the spherical metric). The Julia set $\mathcal{J}(f)$ is the
complement of the Fatou set \cite{Milnor2006}.

 The escaping set of an entire function is defined by \cite{Eremenko1989}:
\beqq
I(f)=\{z\in\mathbb{C}:\ f^{n}(z)\to\infty\ \mbox{as}\ n\to\infty\}.
\eeqq
The Julia set is the boundary of the escaping set by a result of Er\"{e}menko \cite{Eremenko1989}. And, Baker proved that the multiply connected components of the Fatou set are in the escaping set \cite{Baker1976}.

In transcendental dynamics, the rates of escape for the escaping set is very useful. The fast escaping set \cite{BergweilerHinkkanen1999} is given by
\beq\label{fastescap8-11-1}
A(f)=\{z\in\mathbb{C}:\ \mbox{there is a}\ k\geq0\ \mbox{so that}\ |f^{n+k}(z)|\geq S_n\ \mbox{for all}\ n\geq0\},
\eeq
where $S_0$ is a fixed large number, and $S_{n+1}=\max_{|z|=S_n}|f(z)|$ inductively. For $|z|\leq S_0$, $S_n$ is an upper bound for $|f^n(z)|$, and the fast escaping set contains the points that almost achieve the upper bound. Rippon and Stallard verified that the closure of each Fatou component is in $A(f)$ \cite{RipponStallard2005b}.

The geometric structure of the Julia sets might illustrate fractal structure, the fractal dimension is a useful index for the description of the geometric objects, three useful definitions are Minkowskii dimension, Hausdorff dimension, and packing dimension (See Subsection \ref{preliminaryresult-2} for more details).

The study of the fractal dimension is an interesting topic in complex dynamics. McMullen obtained the Hausdorff dimension of the Julia for the polynomial $e^{2\pi i\al}z+z^2$ with the rotation number $\al$ of bounded type is strictly less than two \cite{McMullen1998}. Shishikura studied the Huasdorff dimension of the Mandelbrot set generated by quadratic polynomials \cite{Shishikura1998}. Baker proved that the Fatou set of a transcendental function has no unbounded, multiply connected components, implying that the Julia set can not be totally disconnected, contains a non-trivial continuum, and has Hausdorff dimension at least one \cite{Baker1975}.
Misiurewicz used an elegant argument to show that the Julia set of the exponential function $e^z$ is the whole plane, implying the Hausdorff dimension is two \cite{Misiurewicz1981}. McMullen studied an exponential family and a sine family, showed that the Julia set of any member of the exponential family has Hausdorff dimension two, and the Julia set of any member of the sine family has positive area \cite{McMullen1987}. Stallard constructed many transcendental functions such that the Hausdorff dimension can be any number in the interval $(1,2]$ \cite{Stallard1997, Stallard2000}. Later, Christopher Bishop provided an infinite product construction of a transcendental function with Hausdorff dimension and packing dimension one \cite{Bishop2018}, which solved an open problem of Baker since 1975.  The method of Bishop was also extended in many other examples, including Baker's original example on the existence of wandering Fatou domains \cite{Baumgartner2015}. There are many work on the study of Hausdorff dimension \cite{BaranskiKarpinskaZdunik2009, BergweilerKarpinska2010, BergweilerKarpinskaStallard2009, RempeStallard2010, Sixsmith2015}.

The singular value can be used in the classification of the dynamics of the transcendental functions. Let $f$ be an entire function and $\al\in\widehat{\mathbb{C}}=\mathbb{C}\cup\{\infty\}$. The number $\al$ is called a singular value if $f$ is not a smooth covering map over any neighborhood of $\al$. We denote the set of all singular values by $\mbox{sing}(f^{-1})$. In other words, if $\al$ is a non-singular value of $f$, then there exists a neighborhood $V$
 of $\al$, where every branch of $f^{-1}$ in $V$ is well defined and is a conformal map of $V$. In the case of a rational function $f$, $\mbox{sing}(f^{-1})$ is nothing but the set of all critical values, that is the images of critical points.
Based on the singular values, three special classes of entire functions can be defined for transcendental functions:
\beqq
\mathcal{B}=\{f:\ \mbox{sing}(f^{-1})\ \mbox{is a bounded set}\},
\eeqq
\beqq
\mathcal{S}=\{f:\ \mbox{sing}(f^{-1})\ \mbox{is a finite set}\},
\eeqq
\beqq
\mathcal{C}=\{f:\ d(S^{+}(f),\mathcal{J}(f))>0\}\ \mbox{with}\ S^{+}(f)=\cup_{n\geq0}f^n(\mbox{sing}(f^{-1})).
\eeqq
The class of functions $\mathcal{B}$ is called entire functions of bounded singular type or Er\"{e}menko-Lyubich class introduced by Er\"{e}menko and Lyubich \cite{EremenkoLyubich1987}, where the singular sets are bounded (but possibly infinite). A transcendental entire function in $\mathcal{S}$ is said to be of finite singular type or to belong to the Speiser class, where the Fatou sets of the functions in $\mathcal{S}$ does not have wandering domains or Baker domains, similar with dynamics for polynomials. The Hausdorff dimension of the Julia sets of Er\"{e}menko-Lyubich functions is strictly larger than $1$ \cite{Stallard1996}, whereas the packing dimension is always $2$ \cite{RipponStallard2005}. Hence, the examples with both Hausdorff dimension and packing dimension $1$ are not in the Er\"{e}menko-Lyubich class.

The order of growth is given by
\beqq
\rho(f)=\limsup_{z\to\infty}\frac{\log\log|f(z)|}{\log|z|}.
\eeqq
 The order of growth has important applications in dynamics via geometric function theory by Rottenfusser et al. \cite{RottRuckRempSchl2011}. A conjecture of Baker is that, ``if
an entire function has order less than $1/2$, does this imply that the Fatou set has no unbounded Fatou components?". A lot of work has contributed to this problem \cite{RipponStallard2013}.
The order of growth can be used in the study of the area of the escaping set and the Julia set for entire functions \cite{Bergweiler2017}.

In this paper, we provide the construction of transcendental functions with positive or even infinite order of growth:
\begin{theorem}\label{maindim8-13-1}
For any $s\in(0,+\infty]$, there is a transcendental entire function $f$ such that the Julia set has
finite $1$-dimensional spherical Hausdorff measure and the order of growth is $s$.
\end{theorem}
\begin{theorem}\label{maindim8-13-2}
 For any $s\in(0,+\infty]$, there is a transcendental function a function $f$ with the order of growth $s$ satisfying the following properties:
\begin{itemize}
\item[(1)] Every Fatou component $\Om$ is a bounded, infinitely connected domain whose boundary consists of a countable number of $C^1$ curves, and the accumulation set of these curves is the outer component of $\partial \Om$, where this boundary separates $\Om$ from $\infty$.
\item[(2)] The fast escaping set, $A(f)$, is the union of the closure of all the Fatou components, and $A(f)\cap\mathcal{J}(f)$ is the union of boundaries of the Fatou components.
\item[(3)] $\mathcal{J}(f)$ has Hausdorff dimension and packing dimension $1$.
\item[(4)] Given any $\al>0$, $f$ may be chosen so that $\mbox{dim}(\mathbb{C}\setminus A(f))<\al$.
\item[(5)] $\mbox{dim}(I(f)\setminus A(f))=0$.
\end{itemize}
\end{theorem}

Since the Hausdorff and packing dimension is $1$ in the work of Bishop \cite{Bishop2018}, these results are the generalization of Bishop's work.

We also provide examples to illustrate a criteria provided by Bergweiler is sharp. In \cite{Bergweiler2012}, Bergweiler obtained a criteria on the estimation of the packing dimension:
\begin{theorem}\cite[Theorem 1.1]{Bergweiler2012}
Let $f$ be a transcendental entire function satisfying
\beq\label{est-3}
\liminf_{r\to\infty}\frac{\log\log(\max_{|z|=r}|f(z)|)}{\log\log r}=\infty.
\eeq
If $\mathcal{F}(f)$ has no multiply connected component, then
\beqq
\mbox{Pdim}(I(f)\cap \mathcal{J}(f))=2.
\eeqq
\end{theorem}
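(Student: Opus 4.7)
The plan is to establish that $\mbox{Pdim}(I(f)\cap \mathcal{J}(f))=2$ by proving the stronger \emph{local} statement that for every $z_0\in \mathcal{J}(f)$ and every neighborhood $U$ of $z_0$, the upper box-counting dimension of $I(f)\cap \mathcal{J}(f)\cap U$ equals $2$. Since packing dimension is countably stable and is bounded below by the upper box dimension of any bounded piece, such a uniform local statement immediately yields the theorem.

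First I would exploit the hypothesis \eqref{est-3}, which forces $\log M(r,f)$ to grow faster than any iterated polynomial in $\log r$, to produce a rich supply of ``tracts'' at infinity. Concretely, a Wiman--Valiron type argument combined with Ahlfors' theory of covering surfaces should yield a sequence $r_n\to\infty$ and points $\zeta_n$ with $|\zeta_n|\asymp r_n$ for which $f$ restricted to a disk $D_n$ around $\zeta_n$ behaves like the monomial $z^{N_n}$, with $N_n$ growing faster than any power of $\log r_n$. In particular, the preimage of a fixed large round annulus $A=\{2<|w|<R\}$ under $f$ contains at least $N_n$ disjoint simply connected components, each of which $f$ maps univalently onto $A$.

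Next, I would transport these components into a neighborhood of $z_0$. Since $\mathcal{J}(f)$ is perfect and completely invariant and backward orbits are dense in it, fix a branch of $f^{-m}$ sending $\zeta_n$ close to $z_0$; Koebe distortion then carries the $N_n$ disjoint tract-preimages to $N_n$ roughly round disjoint subdomains of $U$ of comparable diameter $\delta_n$. The ``no multiply connected Fatou component'' hypothesis is used here in an essential way: any such subdomain lying entirely inside $\mathcal{F}(f)$ would, after forward iteration, create a Fatou component surrounding an inner preimage and so force a multiply connected wandering domain. Hence each subdomain must meet $\mathcal{J}(f)$; and since the local degree $N_n$ is enormous, in each one can in fact select a point escaping to infinity, giving $N_n$ disjoint pieces of $I(f)\cap \mathcal{J}(f)\cap U$. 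Sending $n\to\infty$ and matching $N_n$ against $\delta_n$ should deliver a covering lower bound of the form $\delta_n^{-2+o(1)}$, i.e.\ upper box dimension $2$.

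The main obstacle is the distortion control that converts the combinatorial count ``$N_n$ components of diameter $\delta_n$'' into a genuine box-dimension-$2$ bound. Without the class $\mathcal{B}$ assumption one cannot invoke standard expansion on logarithmic tracts; a quantitative form of the Ahlfors island theorem is required to produce simply connected islands over large disks with univalent inverse branches of uniformly bounded hyperbolic distortion, and one must tie $N_n$ to the inner radii of the $D_n$ through a Cartwright--Heins-type minimum-modulus lower bound that actually uses \eqref{est-3} (the finite-order Wiman--Valiron estimate alone is insufficient). Once these quantitative ingredients are assembled, a Moran-type covering argument over the nested scales $\delta_n$ closes out the packing dimension~$2$ conclusion.
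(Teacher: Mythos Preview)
This theorem is not proved in the present paper at all: it is quoted verbatim from \cite{Bergweiler2012} as background, and the author's own contribution goes in the opposite direction (building examples with multiply connected Fatou components and packing dimension $1$, to show that the hypothesis cannot be dropped). There is thus no proof here against which to compare your proposal.

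That said, your sketch does line up with the architecture of Bergweiler's original argument in \cite{Bergweiler2012}: Wiman--Valiron theory is used to approximate $f$ by a power map $z\mapsto cz^{N}$ near points of maximum modulus, yielding a large number of univalent islands; these are pulled back by inverse branches with Koebe distortion into any neighbourhood of a Julia point; and the absence of multiply connected Fatou components is invoked to guarantee that each pulled-back island meets $\mathcal{J}(f)$ (indeed, Bergweiler shows that under this hypothesis every annulus of sufficiently large modulus surrounding the origin meets the Julia set). Your diagnosis that the hard work lies in the quantitative distortion control---matching the island count $N_n$ against the scale $\delta_n$ to extract the exponent $2$---is accurate; Bergweiler handles this by an explicit iterated construction of nested families of disks rather than a direct Ahlfors-islands argument, but the underlying mechanism is the one you describe.
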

In our work, we show that there is a transcendental function, which satisfies \eqref{est-3} and $\mathcal{F}(f)$ has a multiply connected component, but the packing dimension is $1$. Further, for any $s\in(0,+\infty)$, there is a function $f$ satisfying
\beqq
\liminf_{r\to\infty}\frac{\log\log(\max_{|z|=r}|f(z)|)}{\log\log r}=s,
\eeqq
the Fatou set of this function has a multiply  connected component, and the packing dimension of the Julia set in the escaping set is $1$ (See Remark \ref{criteria-Ber}).

This solves an open problem of Bishop:
\begin{question} \cite[Problem 4]{Bishop2018}
 The examples constructed in Bishop's work \cite{Bishop2018} have order of growth zero, where the order of growth is zero, moreover, the construction there can be as ``close to" polynomial growth as we wish. Can we build examples of positive or infinite order of growth?  Can we use such constructions to show the conditions in Bergweiler's paper \cite{Bergweiler2012} implying $\mbox{Pdim}(\mathcal{J})=2$ are sharp?
\end{question}
This also provides a solution to a problem of Baker since 1975 with Hausdorff and packing dimension $1$, and any order of growth. This kind of examples also provide evidence on the correctness of Baker's conjecture on the order of growth. Further, by combing the techniques used in our present work and the recent work of \cite{Burkart2019}, one could construct transcendental functions with packing dimensions dense in the interval $(1,2)$, and finite or even infinite order of growth.
And, these examples are not in the Er\"{e}menko-Lyubich class or Speiser class.

The rest of this paper is organized as follows. In Section \ref{preliminaryresult}, some useful concepts and lemmas are introduced, this section is divided into three parts. In Subsection \ref{preliminaryresult-1}, some concepts and results in complex dynamics are given; in Subsection \ref{preliminaryresult-2}, some results on fractal dimension are introduced; in Subsection \ref{hypdy-1}, a useful class of hyperbolic polynomials is introduced.
The main idea of the construction of this kind of functions is contained in Section \ref{functiondef}, The order of growth is estimated in Subsection \ref{ordergrowth1}.
The details of the construction of the transcendental functions are provided in Section \ref{funconstr}, the whole construction is divided into several steps.
The packing dimension is obtained in Section \ref{packingdim-1}.

\section{Preliminaries}\label{preliminaryresult}

In this section, some useful results are introduced. This section is split into three parts. In Subsection \ref{preliminaryresult-1}, some concepts and results in complex dynamics are given; in Subsection \ref{preliminaryresult-2}, some results on fractal dimension are introduced; in Subsection \ref{hypdy-1}, a useful class of hyperbolic polynomials is introduced.

\subsection{Basic concepts}\label{preliminaryresult-1}

\begin{definition} \cite{Milnor2006}
Let $\widehat{\mathbb{C}}=\mathbb{C}\cup\{\infty\}$ be the extended complex plane (or the one-point compactification of $\mathbb{C}$), the spherical metric $\chi$ defined on
$\widehat{\mathbb{C}}$ is given by
\beqq
\chi(z,z')=\frac{|z-z'|}{\sqrt{1+|z|^2}\sqrt{1+|z'|^2}}\ \forall z,z'\in\mathbb{C};\ \ \chi(z,\infty)=\frac{1}{\sqrt{1+|z|^2}}.
\eeqq
Let $D$ be a domain in the complex plane $\mathbb{C}$. A family $\mathcal{F}$ of meromorphic functions on $D$ is said to be normal on $D$ if each sequence $\{f_n\}\subset \mathcal{F}$ has a convergent subsequence on compact subsets of $D$ with respect to the spherical metric.
\end{definition}

\begin{definition} \cite{Milnor2006}
 The Fatou set $\mathcal{F}(f)$ of an entire function $f$ is the set where the
iterates $f^n$ locally
form a normal family. The Julia set $\mathcal{J}(f)$ is the
complement of the Fatou set.
\end{definition}

\begin{lemma} (Cauchy Formula) \label{cauchyformula8-23-1}
Let $f$ be an analytic function defined on $B(a,R)$. Assume for any $z\in B(a,R)$, $|f(z)|\leq M$.
Then, for any $n\in\mathbb{N}$, one has
\beqq
|f^{(n)}(a)|\leq\frac{n!M}{R^n}.
\eeqq
\end{lemma}

\begin{lemma}\cite[Theorem 3.17]{Morosawa2000}
For any entire function, a multiply connected component of the Fatou set is a wandering domain.
\end{lemma}

\subsection{On the definitions of dimensions}\label{preliminaryresult-2}

In this section, the concept of Minkowski, Hausdorff, packing dimensions on fractal geometry are introduced, and the Whitney decomposition is also introduced, which is a useful tool in the estimate of dimension \cite{BishopPeres2017, Falconer1990}.

\begin{definition} (Minkowski dimesnion)
Let $X$ be a metric space. For a bounded set $K\subset X$ and any positive number $\ep$, consider the family of all the subsets of $X$ with diameter no larger than $\ep$, let $N(K,\ep)$ be the minimal number of subsets from this family such that the union of these subsets covers $K$. The upper and lower Minkowski dimension are defined respectively by
\beqq
\overline{\mbox{Mdim}}(K)=\limsup_{\ep\to0}\frac{\log N(K,\ep)}{\log 1/\ep}
\eeqq
and
\beqq
\underline{\mbox{Mdim}}(K)=\liminf_{\ep\to0}\frac{\log N(K,\ep)}{\log 1/\ep}.
\eeqq
If $\overline{\mbox{Mdim}}(K)=\underline{\mbox{Mdim}}(K)$, then this is called the Minkowski dimension of $K$, denoted by $\mbox{Mdim}(K)$.
\end{definition}

\begin{definition} (Hausdorff dimension)
Let $X$ be a metric space, $K$ be a subset of $X$, and $\al$ be a positive number.
\begin{itemize}
\item the $\al$-dimensional Hausdorff content is
\beqq
\mathcal{H}^{\al}_{\infty}(K)=\inf_{\mathcal{U}}\bigg\{\sum_i \mbox{diam}(U_i)^{\al}:\ K\subset\bigcup_{i} U_i\bigg\},
\eeqq
where the infimum is with respect to all the countable cover $\mathcal{U}=\{U_i\}_{i\in\mathbb{N}}$ of $K$.
\item The Hausdorff dimension of $K$ is
\beqq
\mbox{dim}(K)=\inf\{\al:\ \mathcal{H}^{\al}_{\infty}(K)=0\}.
\eeqq
\item For any positive number $\ep$, set
\beqq
\mathcal{H}^{\al}_{\ep}(K):=\inf_{\mathcal{U}}\bigg\{\sum_i\mbox{diam}(U_i)^{\al}:\ K\subset\bigcup U_i,\ \mbox{diam}(U_i)<\ep\bigg\},
\eeqq
where the infimum is with respect to all
the countable cover $\mathcal{U}=\{U_i\}_{i\in\mathbb{N}}$ with diameter less than $\ep$.
The $\al$-dimensional Hausdorff measure of $K$ is
\beqq
\mathcal{H}^{\al}(K)=\lim_{\ep\to0}\mathcal{H}^{\al}_{\ep}(K).
\eeqq
\end{itemize}
\end{definition}

\begin{definition} (Packing dimension)
Let $X$ be a metric space, $K$ be a subset of $X$, $\al$ be a positive number, and $\ep$ be a positive number. For any positive number $r$, let $B(x,r)=\{y\in X:\ y\in X, \mbox{dist}(x,y)<r\}$. Let $\{B(x_j,r_j)\}_{j\in\mathbb{N}}$ be a collection of disjoint open balls with center contained in $K$ and radius less than $\ep$, that is, $x_j\in K$ and $r_j<\ep$ for any $j\in\mathbb{N}$, and $\{B(x_j,r_j)\}_{j\in\mathbb{N}}$ be a cover of $K$.
\begin{itemize}
\item The $\al$-dimensional packing premeasure is
\beqq
\widetilde{\mathcal{P}}^{\al}(E)=\lim_{\ep\to0}\bigg(\sup\sum^{\infty}_{j=1}(2r_j)^{\al}\bigg),
\eeqq
where the supremum is taken over all the collection of disjoint open balls.
\item The packing measure in dimension $\al$ is
\beqq
\mathcal{P}^{\al}(K)=\inf\bigg\{\sum^{\infty}_{i=1}\widetilde{\mathcal{P}}^{\al}(K_i):\ K\subset\bigcup^{\infty}_{i=1}K_i\bigg\}.
\eeqq
\item The packing dimension of $K$ is
\beqq
\mbox{Pdim}(K)=\inf\{\al:\ \mathcal{P}^{\al}(K)=0\}.
\eeqq
\end{itemize}
\end{definition}

\begin{lemma}\cite[Proposition 2.7]{BishopPeres2017}\label{equest-2}
The packing dimension of any set $K$ in a metric space may be expressed in terms of upper Minkowski dimensions:
$$
\mbox{Pdim}(K)=\inf\bigg\{\sup_{j\geq1}\overline{\mbox{Mdim}}(K_j):\ K\subset\cup^{\infty}_{j=1}K_j\bigg\},
$$
where the infimum is over all countable covers of $K$. Since the upper Minkowski dimension of a set and its closure are the same, we can assume that all the sets $\{K_j\}$ above are closed.
\end{lemma}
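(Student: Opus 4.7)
The plan is to pass through the packing premeasure $\widetilde{\mathcal{P}}^{\al}$ and exploit its intimate link with the upper Minkowski dimension, in the spirit of Tricot, and then apply the Munroe (method II) construction that produces $\mathcal{P}^{\al}$ from $\widetilde{\mathcal{P}}^{\al}$. The key intermediate step I would prove for every (bounded) set $E$ is the identification
\beq\label{eqn:prem-mink}
\overline{\mbox{Mdim}}(E)=\inf\bigl\{\al>0:\ \widetilde{\mathcal{P}}^{\al}(E)<\infty\bigr\}=\inf\bigl\{\al>0:\ \widetilde{\mathcal{P}}^{\al}(E)=0\bigr\}.
\eeq
To obtain this, fix $\al>\overline{\mbox{Mdim}}(E)$, pick $\be\in(\overline{\mbox{Mdim}}(E),\al)$ so that $N(E,\de)\le\de^{-\be}$ for all small $\de$, and for any admissible disjoint family $\{B(x_k,r_k)\}$ with $x_k\in E$ and $r_k\le\ep$ partition the balls by dyadic scale $r_k\in(2^{-j-1},2^{-j}]$; the centers in a given bucket form a $2^{-j}$-separated subset of $E$, so their number is bounded by $N(E,2^{-j})\le 2^{j\be}$, and $\sum_k(2r_k)^{\al}$ is controlled by the convergent geometric series $\sum_j 2^{j\be}\cdot 2^{-(j-1)\al}$ since $\be<\al$, yielding $\widetilde{\mathcal{P}}^{\al}(E)<\infty$. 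For the converse, if $\overline{\mbox{Mdim}}(E)>\al$ there exists $\be>\al$ and a sequence $\de_n\to 0$ with $N(E,\de_n)\ge\de_n^{-\be}$; a maximal $\de_n$-separated set in $E$ produces a disjoint packing of $(\de_n/2)$-balls whose $\al$-sum is at least $\de_n^{\al-\be}\to\infty$, forcing $\widetilde{\mathcal{P}}^{\al}(E)=\infty$. Finally the one-line estimate $\sum_k(2r_k)^{\al'}\le(2\ep)^{\al'-\al}\sum_k(2r_k)^{\al}$ upgrades ``finite at $\al$" to ``zero at any $\al'>\al$".

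With \eqref{eqn:prem-mink} in hand both inequalities of the lemma are formal. For the $(\le)$ direction, given any countable cover $A\subset\cup_j A_j$ and any $\al>\sup_j\overline{\mbox{Mdim}}(A_j)$, one has $\widetilde{\mathcal{P}}^{\al}(A_j)=0$ for every $j$ by \eqref{eqn:prem-mink}, so the Munroe definition of $\mathcal{P}^{\al}$ gives $\mathcal{P}^{\al}(A)\le\sum_j\widetilde{\mathcal{P}}^{\al}(A_j)=0$ and hence $\mbox{Pdim}(A)\le\al$; letting $\al$ decrease to $\sup_j\overline{\mbox{Mdim}}(A_j)$ and then taking the infimum over covers yields $\mbox{Pdim}(A)\le\inf\{\sup_j\overline{\mbox{Mdim}}(A_j)\}$. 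For the $(\ge)$ direction, given $\al>\mbox{Pdim}(A)$ we have $\mathcal{P}^{\al}(A)=0$, so by the infimum in the definition of $\mathcal{P}^{\al}$ there is a countable cover $A\subset\cup_j A_j$ with $\sum_j\widetilde{\mathcal{P}}^{\al}(A_j)<\infty$; every summand is then finite, so \eqref{eqn:prem-mink} gives $\overline{\mbox{Mdim}}(A_j)\le\al$ for each $j$, and therefore $\inf\{\sup_j\overline{\mbox{Mdim}}(A_j)\}\le\al$. Letting $\al$ decrease to $\mbox{Pdim}(A)$ closes the inequality.

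For the closure statement, observe that every disjoint packing centered in $\overline{A_j}$ can, after an arbitrarily small perturbation of centers and radii, be realized by a disjoint packing centered in the dense subset $A_j$, so $\widetilde{\mathcal{P}}^{\al}(A_j)=\widetilde{\mathcal{P}}^{\al}(\overline{A_j})$, and then \eqref{eqn:prem-mink} gives $\overline{\mbox{Mdim}}(A_j)=\overline{\mbox{Mdim}}(\overline{A_j})$; replacing each $A_j$ in any optimizing cover by its closure leaves the infimum in the lemma invariant. The only genuinely technical step is the premeasure/Minkowski comparison \eqref{eqn:prem-mink}; once it is in place the rest of the argument is a routine exercise in the monotonicity and countable subadditivity of the Munroe construction, and I do not anticipate any serious obstacle beyond bookkeeping absolute constants in the covering/packing number comparison.
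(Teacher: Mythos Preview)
The paper does not prove this lemma; it is quoted verbatim from \cite[Proposition~2.7]{BishopPeres2017} and used as a black box, so there is no in-paper argument to compare against. Your sketch is the standard Tricot argument and is correct: the identification \eqref{eqn:prem-mink} between $\overline{\mbox{Mdim}}$ and the critical exponent of $\widetilde{\mathcal{P}}^{\al}$ via dyadic bucketing of radii, followed by the Munroe method-II passage from $\widetilde{\mathcal{P}}^{\al}$ to $\mathcal{P}^{\al}$, is exactly how this is proved in \cite{BishopPeres2017} and in Tricot's original paper.

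Two minor remarks. First, your derivation of \eqref{eqn:prem-mink} tacitly assumes $E$ is totally bounded (so that $N(E,\de)<\infty$); in a general metric space the $A_j$ in the cover need not be bounded, so strictly speaking one should first refine any cover by intersecting with a fixed countable family of bounded sets, which does not increase $\sup_j\overline{\mbox{Mdim}}(A_j)$. Second, for the closure assertion it is cleaner to observe directly that $N(E,\de)$ and $N(\overline{E},\de)$ agree (any $\de$-cover of $E$ has closures that $\de$-cover $\overline{E}$), rather than perturbing packings; this avoids the small bookkeeping needed to keep perturbed balls disjoint.
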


\begin{lemma}\label{equest21-1}
By the definitions above, one has
\beqq
\mbox{dim}(K)\leq\underline{\mbox{Mdim}}(K)\leq\overline{\mbox{Mdim}}(K)
\eeqq
and
$$
\mbox{dim}(K)\leq \mbox{Pdim}(K)\leq\overline{Mdim}(K).
$$
\end{lemma}

Now, the Whitney decomposition is introduced \cite{Stein1970}. The dyadic cubes and Whitney covers can be used in the definition of the upper Minkowski dimension. For $n\in\mathbb{Z}$, the collection of $n$-th generation of closed dyadic intervals $Q=[j2^{-n},(j+1)2^{-n}]$, the length is $l(Q)=2^{-n}$. Denote $\mathcal{D}=\cup_{n\in\mathbb{Z}}\mathcal{D}_n$. A dyadic cube in $\mathbb{R}^d$ is any product of dyadic intervals that all have the same length, the length of a square is $l(Q)$ and the diameter is $|Q|=\sqrt{d}|Q|$. Each dyadic cube is contained in a unique dyadic cube $Q^{\uparrow}$ with $|Q^{\uparrow}|=2|Q|$, this $|Q^{\uparrow}|$ is called the parent of $Q$.

Let $\Om\subset\mathbb{R}^d$ be an open subset. Every point of $\Om$ is contained in a dyadic cube $Q$ with $Q\subset\Om$ and $|Q|\leq\mbox{dist}(Q,\partial\Om)$. By maximality, there is a collection of dyadic cubes satisfying $\mbox{dist}(Q^{\uparrow},\partial\Om)\leq|Q^{\uparrow}|$, implying that $\mbox{dist}(Q,\partial\Om)\leq|Q^{\uparrow}|+|Q|=3|Q|$. This collection of dyadic cubes is called a Whitney decomposition, that is, a collection of dyadic cubes $\{Q_j\}$ in $\Om$ disjoint except along their boundaries, whose union covers $\Om$ and
\beqq
\frac{1}{\ld}\mbox{dist}(Q_j,\partial\Om)\leq|Q_j|\leq\ld\mbox{dist}(Q_j,\partial\Om)
\eeqq
for some constant $\ld>1$ (see Theorem 3 in \cite{Stein1970}).

\begin{definition}
For any compact set $K\subset\mathbb{R}^d$, a Whitney decomposition $\mathcal{W}$
is for $\Om=\mathbb{R}^d\setminus K$ that are within distance $1$ of $K$, the exponent
of convergence is defined by
\beqq
\al=\al(K)=\inf\bigg\{\al:\ \sum_{Q\in\mathcal{W}}|Q|^{\al}<\infty\bigg\}.
\eeqq
\end{definition}

\begin{lemma}\cite[Lemma 2.6.1]{BishopPeres2017}\label{equest-3}
For any compact set $K\subset\mathbb{R}^d$, one has $\al(K)\leq\overline{\mbox{Mdim}}(K)$. If the Lebesgue measure of $K$ is zero, then $\al(K)=\overline{\mbox{Mdim}}(K)$.
\end{lemma}

The Minkowski dimension of the system constructed here is verified to be $1$ by the Whitney decomposition. This, together with Lemmas \ref{equest-2}, \ref{equest21-1}, and \ref{equest-3}, implies the packing and Hausdorff dimension is $1$.

\subsection{Hyperbolic dynamics of polynomial maps}\label{hypdy-1}

In this section, a polynomial map with hyperbolic dynamics and related properties are introduced \cite{Morosawa2000}.

For a polynomial
\beqq
 p(z)=a_{k_0}z^{k_0}+a_{k_0-1}z^{k_0-1}+\cdots+a_0,
\eeqq
the set of escaping points is
\beqq
I(p)=\{z\in\mathbb{C}:\ \lim_{n\to\infty}p^n(z)=\infty\};
\eeqq
the set of critical points is
\beqq
C(p)=\{z\in\mathbb{C}:\ p^{\prime}(z)=0\};
\eeqq
the post-critical set is
\beqq
\overline{C^{+}(p)}=\overline{\bigcup^{\infty}_{n=1}p^n(C(p))};
\eeqq
the set $\mathcal{K}(p)=\mathbb{C}\setminus I(p)$ is called the filled Julia set, the boundary of $\mathcal{K}(p)$ is said to be the Julia set, denoted by $\mathcal{J}(p)$. The polynomial $p$ with degree no less than $2$ is said to be hyperbolic, if $\mathcal{J}(p)\cap\overline{C^{+}(p)}=\emptyset$.

A simple model for the construction of the transcendental function is the polynomial $p_{\ld}(z)=\ld(2z^2-1)$ with the real parameter $\ld\geq1$. For $\ld=1$, $1$ is a fixed point of $p_{\ld}(z)$. For $\ld>1$, the orbit of $0$ is divergent to $\infty$. So, for any positive integer $n$ and $\ld\geq1$, $|p^n_{\ld}(0)|\geq1$.

\begin{lemma}\cite[Lemma 4.1]{Bishop2018}\label{equ2021-2-13-1}
For the polynomial $p_{\ld}(z)=\ld(2z^2-1)$ with $\ld\geq1$, the Julia set is a Cantor subset of $[-1,1]$, and
the upper Minkowski dimension tends to zero as $\ld\to\infty$.
\end{lemma}

\begin{proof}
An outline of the arguments is provided. A conjugacy map $\tfrac{1}{2}(z+\tfrac{1}{z})$ conjugates the action of $z^2$ on $\mathbb{D}=\{z:\ |z|>1\}$ to the action of $T_2(z)=2z^2-1$ on $U=\mathbb{C}\setminus[-1,1]$. So, the Julia set for $T_2$ is contained in $[-1,1]$, and the iteration of points off $[-1,1]$ escapes to $\infty$.  For $\ld>1$, the Julia set of $p_{\ld}$ is a Cantor set contained in the following two intervals depending on $\ld$:
$$\bigg[-\sqrt{\frac{1}{2}+\frac{1}{2\ld}},-\sqrt{\frac{1}{2}-\frac{1}{2\ld}}\bigg]\cup
\bigg[\sqrt{\frac{1}{2}-\frac{1}{2\ld}},\sqrt{\frac{1}{2}+\frac{1}{2\ld}}\bigg].$$
\end{proof}

\begin{lemma}\cite[Lemma 4.2]{Bishop2018}\label{equ2021-2-13-2}
Consider the polynomial $p_{\ld}(z)=\ld(2z^2-1)$ with $\ld\geq1$. For any $r\geq2$ and $n\in\mathbb{N}$,
let $\ga_n$ be a connected component of $\{z:\ |p^n_{\ld}(z)|=r\}$. There is a constant
$C_{\ld}$ such that $\text{diam}(p_{\ld}(\ga_n))\geq C_{\ld}\text{diam}(\ga_n)$ and $C_{\ld}$ may
be chosen as large as we wish by taking $\ld$ large enough.
\end{lemma}

\section{The outline of the construction of the function}\label{functiondef}

The main idea of the construction of the function is provided in this section.

Recall that an annulus is the bounded area between two concentric circles, and the width of an annulus is the difference between the radii of its outer and inner bounding circles.

Consider a function
\beq\label{equ2021-7-25-1}
F_0(z)=p_{\ld}^N(z),
\eeq
where $F_0$ is the $N$-th iterates of the polynomial $p_{\ld}(z)$.

\begin{remark}\label{equ2021-7-29-1}
For the polynomial $F_0$, the critical values are the first iteration of the critical points. By \eqref{equ2021-7-25-1}, $F_0$ has $2^N-1$ critical points. Since $1+2+2^2+\cdots+\cdots+2^{N-2}+2^{N-1}=2^N-1$, the critical points consist of this set
$\{0,\ p^{-1}_{\ld}(0),\ p^{-2}_{\ld}(0),...,p^{-{(N-1)}}_{\ld}(0)\}$, that is, the critical point $0$, the two pre-images of $0$ under $p_{\ld}$, and so on. Hence, the critical values of $F_0$ are the first $N$ iterates of $0$ under $p_{\ld}$. Further, it follows from \eqref{equ2021-7-25-1} that the $F_0$-images of the critical values of $F_0$ are from the $(N+1)th$ to $(2N)th$ iterates of $0$.
\end{remark}

Let $m=2^N$, by \eqref{equ2021-7-25-1}, $F_0$ is a polynomial with degree $m$, and has the leading term (the highest degree term) is $(2\ld)^{m-1}z^m$. So,  there is a sufficiently large positive number $R\geq32$ such that
\beq\label{critical-1}
\frac{1}{2}\leq\bigg|\frac{F_0(z)}{(2\ld)^{m-1}z^{m}}\bigg|\leq\frac{3}{2}\ \mbox{for}\ |z|\geq R.
\eeq
For convenience, take a positive constant $m^*=\tfrac{(m-1)\log(2\ld)}{\log(\ld)}$ such that
\beqq
(2\ld)^{m-1}=\ld^{m^*}.
\eeqq

Assume $\ld$ is sufficiently large such that the dimension of the Cantor set near the origin is sufficiently small by Lemma \ref{equ2021-2-13-1}, assume $R$ is sufficiently large such that the dimension of the Cantor set for the perturbation map of $F_0$ is also sufficiently small.

Fix a positive constant $L_0>1$.  Take $R$ large enough,  set
\beq
R_1:=2R,
\eeq
choose a positive integer $n_1$, and define
\beq
F_1(z)=1-\frac{1}{2}\bigg(\frac{z}{R_1}\bigg)^{n_1}.
\eeq
Define
\beqq
f_0(z)=F_0(z)\ \text{and}\ f_1(z)=F_0(z)F_1(z).
\eeqq
Inductively, define
\beq
f_k(z)=f_{k-1}(z)\cdot F_k(z)=\prod^{k}_{j=0}F_j(z).
\eeq
that is, suppose the polynomial $F_k$ has been defined, denote
\beq\label{inequ-12a}
R_{k+1}:=M(f_k,2R_k)=\max\{|f_k(z)|:\ |z|=2R_k\},
\eeq
choose a positive integer $n_{k+1}$, and set
\beq
F_{k+1}(z):=1-\frac{1}{2}\bigg(\frac{z}{R_{k+1}}\bigg)^{n_{k+1}}
\eeq
and
\beqq
f_{k+1}(z):=f_{k}(z)\cdot F_{k+1}(z)=\prod^{k+1}_{j=0}F_j(z).
\eeqq
Define the function $f$ as
\beq\label{equ2021-1-26-3a}
f(z)=\lim_{k\to\infty}f_{k}(z)=\prod^{\infty}_{k=0}F_k(z)=F_0(z)\cdot\bigg[\prod^{\infty}_{k=1}F_k(z)\bigg].
\eeq
For the above chosen sequence of positive integers $\{n_{k}\}_{k\in\mathbb{N}}$, if
\begin{itemize}
\item[(A1)]  $\sum^{\infty}_{k=1}\tfrac{1}{(L_0)^{n_k}}<\infty,$
\end{itemize}
then this function $f$ is well-defined by Lemma \ref{convergrad}.

\begin{remark}
In \cite{Bishop2018}, the parameters $n_{k}$ are preassigned numbers depending on a function controlling the order of growth, such that the order of growth is zero, and the growth is as close to polynomial growth as we wish.

The freedom of the choices of the parameters $n_{k}$ is used to change the order of the growth, implying that the order can be positive or even infinite. So, from this point of view, the construction of this article can be thought of as a complementary of the work in \cite{Bishop2018}, where the order of growth is zero there. If $n_k=(\lfloor R_k\rfloor)^k$, then the order is $+\infty$; if  $n_k=\lfloor R_k^s\rfloor$, then the order is $s\in(0,+\infty)$ (For more details, see Subsection \ref{ordergrowth1}).
\end{remark}

In Assumptions (A1),  the convergence of $\sum^{\infty}_{k=1}\tfrac{1}{L_0^{n_k}}$ requires that $n_k$ should grow to infinity as $k$ goes to $+\infty$.
For clarity of the discussions, assume $L_0=\tfrac{3}{2}$, a simpler assumption instead of (A1) is introduced:
\begin{itemize}
\item[(A*)] $n_0=7$, $n_1\geq n_0+1$, and $n_{i+1}>n_{i}$ for any $i\geq1$.
\end{itemize}

The constant $R$ should be large enough, see Lemmas \ref{seriescon-8-4-2}, \ref{boundaryfatou}, \ref{inequ-15}, \ref{criticalfatou-8-17-3}, and \ref{equ-2021-2-13-3}.
For convenience, introduce the following assumption:
\begin{itemize}
\item[(A**)] $m\geq 2^4=16$, $R\geq 2^5$, where $R$ should be large enough.
\end{itemize}

\begin{remark}
The derivation of some useful properties of the function $f$ needs the assumption $n_{k}\geq8$. So, we assume $n_0=7$.
\end{remark}

By direct calculation, one has
\beq\label{est-8}
m_k:=\mbox{deg}(f_k)=\sum^{k}_{j=0}\mbox{deg}(F_j)=2^N+\sum^{k}_{j=1}n_j=m+\sum^{k}_{j=1}n_j,\ k\geq1.
\eeq

The zeros of $F_k$ are evenly spaced near a circle of radius
\beq\label{valueest-18}
r_k=R_k\bigg(1+\frac{\log2}{n_k}+O(n^{-2}_k)\bigg).
\eeq

\begin{lemma}\cite[Lemma 4.3]{Bishop2018}\label{lemm2021725-1}
For any positive integer $n$, denote by $p^n_{\ld}(z)$ the $n$-th iterate of $p_{\ld}$. Then $|(p^{n}_{\ld})^{\prime\prime}(0)|\geq(4\ld)^n$.
\end{lemma}

\begin{lemma}
Suppose $\ld\geq1$, one has $\lim_{k\to\infty}R_k\to+\infty$ and
\beq\label{inequ-1}
R_{k+1}\geq 4 R_k^2.
\eeq
\end{lemma}

\begin{proof}
It follows from the product rule of derivatives, $F_k(0)=1$,  $F^{\prime}_k(0)=F^{\prime\prime}_k(0)=0$ for $k\geq1$, and Lemma \ref{lemm2021725-1}, that
\beqq
f^{\prime\prime}_k(0)=\sum^{k}_{j=0}F^{\prime\prime}_j(0)\prod^{k}_{\substack{l=0\\
l\neq j}}F_l(0)+\sum^{k}_{j=0}\sum^k_{\substack{n=0\\ n\neq j}}\bigg(F^{\prime}_j(0)F^{\prime}_n(0)\prod^k_{\substack{l=0\\ l\neq j,n}}F_l(0)\bigg)=F^{\prime\prime}_0(0).
\eeqq
So, $|f^{\prime\prime}_k(0)|=|F^{\prime\prime}_0(0)|\geq(4\ld)^N$.

By the Cauchy formula, one has
\beqq
\ld\leq|f^{\prime\prime}_k(0)|\leq\frac{2M(f_k,r)}{r^2}\ \mbox{for}\ r>0.
\eeqq
Let $r=2R_k$, one has
\beqq
R_{k+1}\geq\frac{1}{2} (2R_k)^24\ld\geq8 R_k^2\ld> 4 R_k^2.
\eeqq
\end{proof}

\begin{remark}
The notation ``big O" will be used, where $a_k=O(b_k)$ means that there is a constant $C>0$ such that $a_k\leq C b_k$ for all $k\geq1$.
\end{remark}

\begin{lemma}\label{convergrad}
If the infinite product $f(z)$ in \eqref{equ2021-1-26-3a} satisfies (A1),
then the infinite product $f(z)$
converges uniformly on any compact subset of $\mathbb{C}$.
\end{lemma}

\begin{proof}
Given any $s>0$, take the minimal positive integer $j$ such that $R_j>L_0s$ by \eqref{inequ-1}. For $|z|\leq s$ and $k\geq j$, one has
\begin{align*}
|F_k(z)|=&\bigg|\bigg(1-\frac{1}{2}\bigg(\frac{z}{R_k}\bigg)^{n_k}\bigg)\bigg|\leq
\exp\bigg(\log\bigg(1-\frac{1}{2}\bigg|\frac{z}{R_k}\bigg|^{n_k}\bigg)\bigg)\\
\leq&
\exp\bigg( O\bigg(  \frac{1}{2}\bigg(\bigg|\frac{z}{R_k}\bigg|^{n_k}\bigg)\bigg)\bigg)
\leq
 \exp \bigg(O\bigg( \frac{1}{L_0^{n_k}}\bigg)\bigg).
\end{align*}
So,
\beqq
\bigg|\prod^{\infty}_{k=j}F_k(z)\bigg|\leq \prod^{\infty}_{k=j}|F_k(z)|\leq  \exp \bigg(O\bigg( \sum^{\infty}_{k=j}\frac{1}{L_0^{n_k}}\bigg)\bigg)<\infty.
\eeqq
Hence, the infinite product converges uniformly on the compact set $\{z:\ |z|\leq s\}$ for any $s>0$.

\end{proof}

Therefore,
\beqq
f(z)=\prod^{\infty}_{k=0}F_k(z)=F_0(z)\cdot\bigg[\prod^{\infty}_{k=1}F_k(z)\bigg]=\lim_{k\to\infty}f_{k}(z)
\eeqq
defines an entire function on the complex plane.

Set
\beq\label{equ2021-1-26-1}
A_k:=\bigg\{z:\ \frac{1}{4} R_k\leq|z|\leq 4 R_k\bigg\},\ B_k:=\bigg\{z:\ 4R_k\leq|z|\leq \frac{1}{4}R_{k+1}\bigg\},
\eeq
and
\beq
D_k:=\bigg\{z:\ |z|<\frac{1}{4} R_k\bigg\}.
\eeq

\begin{figure}
\begin{center}
\begin{tikzpicture}[scale=0.9]

\fill[lightgray] (0.5, 0) -- (1, 0) arc (0:180:1) -- (-1, 0) -- (-0.5, 0) arc (180:0:0.5);

  \fill[lightgray] (0.5, 0) -- (1, 0) arc (360:180:1) -- (-1, 0) -- (-0.5, 0) arc (180:360:0.5);

\fill[lightgray] (2, 0) -- (2.5, 0) arc (0:180:2.5) -- (-2.5, 0) -- (-2, 0) arc (180:0:2);

  \fill[lightgray] (2, 0) -- (2.5, 0) arc (360:180:2.5) -- (-2.5, 0) -- (-2, 0) arc (180:360:2);

  \fill[lightgray] (4, 0) -- (4.5, 0) arc (0:180:4.5) -- (-4, 0) -- (-4, 0) arc (180:0:4);

  \fill[lightgray] (4, 0) -- (4.5, 0) arc (360:180:4.5) -- (-4, 0) -- (-4, 0) arc (180:360:4);

  \draw (0, 0) circle (0.5);
  \draw (0, 0) circle (1);
  \draw (0, 0) circle (2);
  \draw (0, 0) circle (2.5);

  \draw (0, 0) circle (4);
  \draw (0, 0) circle (4.5);
  \draw (0, 0) circle (7);
\node at (0,0) {$D_k$};
\node at  (0.72,0) {$A_k$};
\node at  (1.5,0) {$B_k$};
\node at  (2.3,0) {\tiny{$A_{k+1}$}};
\node at  (3.2,0) {$B_{k+1}$};
\node at  (4.3,0) {\tiny{$A_{k+2}$}};
\node at  (5.5,0) {$B_{k+2}$};
\end{tikzpicture}
\end{center}
\caption{An illustration diagram of $D_k$, $A_k$, and $B_k$, where each $A_k$ has bounded modulus, the moduli of $B_k$ become bigger and bigger}
\end{figure}
By the definitions of $A_k$, $B_k$, and $D_k$, each $A_k$ has bounded modulus, the moduli of $B_k$ become bigger and bigger because of \eqref{inequ-1}, and $D_k$ is the bounded complementary component of $A_k$. We will show that (see Subsection \ref{inclusion8-26-1})
\beq\label{equ2021-1-26-2}
f(B_k)\subset B_{k+1}\ \text{and}\ A_{k+1}\subset f(A_k)\subset D_{k+2},\ k\geq1.
\eeq

It follows from $f(B_k)\subset B_{k+1}$ that the points in $B_k$ iterate locally uniformly to $\infty$. This, together with the fact that the Julia set is the boundary of the escaping set by a result of Er\"{e}menko \cite{Eremenko1989}, yields that all $B_k$ are in the Fatou set. So, the Julia set is contained in $D_1\bigcup\cup_{k\geq1} A_k$.

On $D_1$, $f$ could be thought of as a small perturbation of $F_0$. This is derived by $F_k(z)\approx 1$ for $z\in D_1$ and $k\geq1$. Since $F_0$ is an iterate of $p_{\ld}(z)$, $p_{\ld}(z)=\ld p(z)$ is hyperbolic, and the Julia set of $p(z)$ is a Cantor set, one has that the Julia set of $F_0$ is a Cantor set with small dimension for sufficiently large $\ld>0$. So, $f$ also has an invariant Cantor set with small dimension near the origin, denoted by $E$. So, the points in $D_1\setminus E$ will escape out of $D_1$ under forward iteration.

For the Julia set $\mathcal{J}(f)$ for $f$, there are some points that iterate into $E$, denoted by $\widetilde{E}$. Set
\beqq
X:=\mathcal{J}(f)\setminus\widetilde{E},
\eeqq
where this set consists of points whose orbits are in $\cup_{k\in\mathbb{N}}A_k$ infinitely many often.

Points that are mapped under $f$ into $\mathcal{J}(f)\cap(D_1\setminus E)$ eventually re-enter $A_1$, that is, they are in some pre-images of $A_1$. So, we could define the non-negative indices as follows:
\beq\label{equ2021-1-26-6}
A_{-k}=f^{-k-1}(A_1)\cap D_1,\ k\geq0.
\eeq
For the point $z\in X$, the orbit of $z$ is contained in the set $A=\bigcup_{k\in\mathbb{Z}}A_k$, a sequence of integers $k(z,n)$ can be defined such that $f^n(z)\in A_{k(z,n)}$,
\beq\label{equ2021-1-26-4}
k(z,n+1)\leq k(z,n)+1\ \mbox{for}\ k(z,n)\geq1
\eeq
and
\beq\label{equ2021-1-26-5}
k(z,n+1)= k(z,n)+1\ \mbox{for}\ k(z,n)\leq0,
\eeq
where \eqref{equ2021-1-26-4} is derived by \eqref{equ2021-1-26-2} and $f(A_k)\cap A_j=\emptyset$ for $j>k+1\geq2$ (see Lemma \ref{fatou8-2-1}), and \eqref{equ2021-1-26-5} is derived by \eqref{equ2021-1-26-6}.

Hence, the integer sequences $\{k(z,n)\}$ can be classified into two parts: the sequence is eventually strictly increasing or is not, denoted by $Z$ and $Y$, respectively:
\begin{itemize}
\item[(1)] $k(z,n+1)\leq k(z,n)$ infinitely often, denoted by $Y$, corresponding to small dimension;
\item[(2)] $k(z,n+1)=k(z,n)+1$ for all sufficiently large $n$, denoted by $Z$, corresponding to dimension $1$.
\end{itemize}
By definitions of $Y$ and $Z$, $Y$ and $Z$ are subsets of $X$. The set $Y$ contains points that do not escape very quickly, that is, the orbits with initial points in $Y$ might escape slowly, remain bounded, or oscillate. And, the dimension of $Y$ can be as small as possible. The set $Z$ is contained in the fast escaping part of the Julia set, i.e., $Z=\mathcal{J}(f)\cap A(f)$, which consist of the boundary of the components of the Fatou set, and is closed $C^1$ curves, where $\mathcal{J}(f)$ is the Julia set and $A(f)$ is the fast escaping set in \eqref{fastescap8-11-1}.  The dimensions of these two subsets will be studied in Lemmas \ref{packone} and \ref{smalldim-1}.

An illustration diagram of a connected component of the Fatou set is provided in Figure \ref{fatoucomp}.
The outer boundary of this component is smooth in the sense of $C^1$, this smooth curve separates the
this component from $\infty$ and is the accumulation set of other boundary curves, which are
grouped according to levels of curves which are roughly  cocentric with the outer boundary. The levels of
boundary curves lie in some annulus $A_k$ of bounded modulus, but the component contains the
annulus $B_{k-1}$ of huge modulus.

\begin{figure}
\begin{center}
\begin{tikzpicture}[scale=0.5]
\fill[lightgray] (0:0) -- (0:8) arc (0:360:8) -- cycle;
\fill[white] (0, 0) -- (0:1) arc (0:360:1) -- cycle;

\fill[white] (0, 4) -- (0:5) arc (0:360:0.5) -- cycle;
\fill[white] (0, -3) -- (0:-4) arc (0:360:0.5) -- cycle;

\fill[white]  (90:4.5)--(90:5) arc (90:450:0.5) -- cycle;
\fill[white]  (270:4.5)--(270:5) arc (-90:270:0.5) -- cycle;

\foreach \j in {1,2,3,...,12}
{
  \fill[white] (30*\j:6.3) circle (0.2);
};

\foreach \j in {1,2,3,...,30}
{
  \fill (12*\j:7.5) circle (2pt);
};
\end{tikzpicture}
\end{center}
\caption{An illustration diagram of a connected component of the Fatou set (adopted from Figure 1 in \cite{Bishop2018}). The outer boundary curve is $C^1$, which is the accumulation set of other boundary curves; these curves are grouped into levels which lie on curves roughly parallel to the outer boundary. This component contains an annulus $B_{k-1}$ with large modulus, and the outer boundary curve is contained in an annulus $A_k$ with bounded modulus.}\label{fatoucomp}
\end{figure}

\section{The construction of the function}\label{funconstr}

In this section, the construction of entire functions is provided. This section is divided into several steps.

\subsection{Product estimates}

In this subsection, two useful product estimates are given.

\begin{lemma}\label{est-7}
Suppose the assumption (A*) holds, for $R_k$ given as above, one has
\beq\label{valueest-1}
\bigg|\prod^{\infty}_{j=k+1}F_j(z)\bigg|=1+O(R^{-1}_k)\ \mbox{for}\ |z|\leq 4R_k.
\eeq
\end{lemma}

\begin{proof}
 Now, we show the inequality by induction:
\beq\label{inequ-8}
R_j\geq 4^{1+2+2^2+\cdots+2^{j-k-1}}R^{2^{j-k}}_k=4^{(2^{j-k})-1}R^{2^{j-k}}_k\ \mbox{for}\ j\geq k+1.
\eeq
The case $j=k+1$ is derived by \eqref{inequ-1}.  Suppose the above inequality holds for $j=l$. For $j=l+1$, by \eqref{inequ-1}, one has
\begin{align*}
&R_{l+1}\geq 4R_l^2\geq 4\cdot (4^{1+2+2^2+\cdots+2^{l-k-1}}R^{2^{l-k}}_k)^2\\
=&4^{1+2+2^2+\cdots+2^{(l+1)-k-1}}R^{2^{(l+1)-k}}_k=4^{(2^{(l+1)-k})-1}R^{2^{(l+1)-k}}_k.
\end{align*}
So, one has
\begin{align*}
&\bigg|\prod^{\infty}_{j=k+1}F_j(z)\bigg|
=\bigg|\prod^{\infty}_{j=k+1}\bigg(1-\frac{1}{2}\bigg(\frac{z}{R_j}\bigg)^{n_j}\bigg)\bigg|
=\bigg|\exp\bigg(\sum^{\infty}_{j=k+1}\log\bigg(1-\frac{1}{2}\bigg(\frac{z}{R_j}\bigg)^{n_j}\bigg)\bigg)\bigg|\\
=&\bigg|\exp\bigg(\sum^{\infty}_{j=k+1}\log\bigg(1-\frac{1}{2}\bigg(\frac{z}{R_j}\bigg)^{n_j}\bigg)\bigg)\bigg|
\leq\exp\bigg(\sum^{\infty}_{j=k+1}\log\bigg(1+\frac{1}{2}\bigg|\frac{z}{R_j}\bigg|^{n_j}\bigg)\bigg)\\
\leq&\exp\bigg(\sum^{\infty}_{j=k+1}\frac{1}{2}\bigg(\frac{4R_k}{R_j}\bigg)^{n_j}\bigg)
\leq\exp\bigg(\sum^{\infty}_{j=k+1}\frac{4R_k}{2}\bigg(\frac{4R_k}{R_j}\bigg)^{n_j-1}\bigg)\\
=&\exp\bigg(\sum^{\infty}_{j=k+1}2R_k\bigg(\frac{4R_k}{R_j}\bigg)^{n_j-1}\bigg)\leq
\exp\bigg(\sum^{\infty}_{j=k+1}2R_k\bigg(\frac{4R_k}{R_j}\bigg)^{2}\bigg)\\
\leq&\exp\bigg(\sum^{\infty}_{j=k+1}2R_k\bigg(\frac{1}{4^{2^{j-k}-2}R^{2^{j-k}-1}_k}\bigg)^{2}\bigg)
\leq\exp\bigg(\frac{4}{R_k}\bigg)\leq 1+\frac{8}{R_k},
\end{align*}
where $e^x\leq 1+2x$ for $0\leq x\leq 1$ is used in the last step.
\end{proof}

\begin{lemma}\label{est-8-22-1}
Suppose the assumption (A*) holds, for $R_k$ given as above, one has
\beq\label{valueest-2}
\prod^{k-1}_{j=1}\bigg(1+\bigg(\frac{R_j}{R_k}\bigg)^{n_j}\bigg)=1+O(R_k^{-n_0/2}).
\eeq
\end{lemma}

\begin{proof}
Direct calculation gives us that
\begin{align*}
&\prod^{k-1}_{j=1}\bigg(1+\bigg(\frac{R_j}{R_k}\bigg)^{n_j}\bigg)
=\exp\bigg(\log\prod^{k-1}_{j=1}\bigg(1+\bigg(\frac{R_j}{R_k}\bigg)^{n_j}\bigg)\bigg)\\
=&\exp\bigg(\sum^{k-1}_{j=1}\log\bigg(1+\bigg(\frac{R_j}{R_k}\bigg)^{n_j}\bigg)\bigg)
\leq\exp\bigg(\sum^{k-1}_{j=1}\bigg(\frac{R_j}{R_k}\bigg)^{n_j}\bigg)\\
\leq&\exp\bigg(\sum^{k-1}_{j=1}\bigg(\frac{R_j^2}{R_k}\bigg)\bigg(\frac{R_j}{R_k}\bigg)^{n_j-1}\bigg)
\leq\exp\bigg(\sum^{k-1}_{j=1}\bigg(\frac{R_j}{R_k}\bigg)^{n_j-1}\bigg)\\
\leq& \exp\bigg(\bigg(\frac{1}{2\sqrt{R_k}}\bigg)^{n_0}\bigg(1+\frac{1}{2}+\frac{1}{4}\cdots\bigg)^{n_0}\bigg)
\leq1+2R_k^{-n_0/2},
\end{align*}
where $R_{k-1}\leq\sqrt{R_k}/2$, $R_j\leq R_{j+1}/2$ for $1\leq j\leq k-2$ by \eqref{inequ-1}, and $e^x\leq1+2x$ for $0\leq x\leq1$ are used.
\end{proof}

\subsection{The growth of $\{R_k\}$}

In this subsection, the growth of the $\{R_k\}$ is obtained by direct computation.

The classical triangle inequality gives
\beq\label{inequ-2}
\bigg|\bigg(\frac{1}{2}\bigg(\frac{|z|}{R_k}\bigg)^{n_k}-1\bigg)\bigg|\leq|F_k(z)|\leq\bigg|\bigg(\frac{1}{2}\bigg(\frac{|z|}{R_k}\bigg)^{n_k}+1\bigg)\bigg|.
\eeq

\begin{lemma}
Suppose the assumption (A*) holds, for the above $\{R_k\}$ with $k\geq1$, one has
\beq\label{inequ-3}
R_{k+1}\geq\ld^{m^{*}}\cdot2^{(\sum^{k-1}_{j=1}(2n_j-2))+(n_k-2)+m-1}\cdot R_k^{(\sum^{k-1}_{j=1}n_j)/2+m},
\eeq
\beq\label{valueest-8}
R_{k+1}\geq\ld^{m^{*}}\cdot 2^{m-1+\sum^{k}_{j=1}(n_{j}-2)} \cdot  R_{k}^{m+\sum^{k-1}_{j=1}n_{j}}\cdot\bigg[\prod^{k-1}_{j=1}{R_j}^{-n_j}\bigg],
\eeq
and
\beq\label{valueest-11}
 R_{k+1}\leq  \frac{3}{2}\ld^{m^{*}}\cdot(2R_k)^{m+\sum^k_{j=1}n_j}\cdot\bigg[\prod^{k}_{j=1} R^{-n_j}_j\bigg].
\eeq
\end{lemma}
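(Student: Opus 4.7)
The three estimates are obtained from the product representation $f_k = F_0 \cdot \prod_{j=1}^k F_j$ by picking a convenient point (or just any point) on $\{|z|=2R_k\}$ and applying the triangle inequality factor by factor. For \eqref{valueest-8} I would fix $z$ with $|z|=2R_k$ and estimate each factor separately:
\begin{itemize}
\item For $F_0$: use \eqref{critical-1} to get $|F_0(z)| \geq \tfrac12 \ld^{m^*}(2R_k)^m$.
\item For $F_k$: since $|z/R_k|=2$, the triangle inequality gives $|F_k(z)| \geq (2^{n_k-1}-1)^{l_k} \geq 2^{(n_k-2)l_k}$.
\item For $F_j$ with $1\leq j\leq k-1$: since $|z/R_j|=2R_k/R_j \geq 4$ by \eqref{inequ-1}, the triangle inequality yields $\tfrac12|z/R_j|^{n_j}-1 \geq \tfrac14 |z/R_j|^{n_j}$, so $|F_j(z)| \geq 2^{(n_j-2)l_j}(R_k/R_j)^{n_j l_j}$.
\end{itemize}
Multiplying these three bounds and collecting the $2^m$ coming from $(2R_k)^m$ into the power of $2$ gives \eqref{valueest-8} directly; this is the ``raw'' form of the lower bound, still containing the factors $R_j^{-n_j l_j}$ with $j<k$.

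To pass from \eqref{valueest-8} to the cleaner \eqref{inequ-3} I would eliminate the $R_j$'s using the super-exponential growth \eqref{inequ-8}. The key observation is that $R_k \geq 4R_{k-1}^2$ implies $R_k^{1/2} \geq 2R_{k-1} \geq 2R_j$ for every $j\leq k-1$, hence
\beqq
\frac{R_k}{R_j} \;=\; \frac{R_k^{1/2}}{R_j}\cdot R_k^{1/2} \;\geq\; 2\,R_k^{1/2}.
\eeqq
Raising to the $n_j l_j$-th power converts each ratio $(R_k/R_j)^{n_j l_j}$ into $2^{n_j l_j}R_k^{n_j l_j/2}$. Substituting this into \eqref{valueest-8}, the extra factor $2^{n_j l_j}$ combines with the $2^{(n_j-2)l_j}$ already present to yield $2^{(2n_j-2)l_j}$, while the $R_k$-exponent becomes $m+\tfrac12\sum_{j=1}^{k-1}n_j l_j$; bookkeeping the $2$'s then produces \eqref{inequ-3} exactly.

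For the upper bound \eqref{valueest-11} I would again work on the circle $|z|=2R_k$ but use the triangle inequality in the opposite direction. The factor $F_0$ is at most $\tfrac32 \ld^{m^*}(2R_k)^m$ by \eqref{critical-1}. For each $j\geq 1$, since $\tfrac12|z/R_j|^{n_j} = 2^{n_j-1}(R_k/R_j)^{n_j}\geq 1$, the bound $1+\tfrac12|z/R_j|^{n_j}\leq 2\cdot\tfrac12|z/R_j|^{n_j}=(2R_k/R_j)^{n_j}$ gives
\beqq
|F_j(z)| \;\leq\; (2R_k)^{n_j l_j}\,R_j^{-n_j l_j}.
\eeqq
Multiplying through produces \eqref{valueest-11}.

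The only real obstacle is the exponent bookkeeping, especially the trick needed to pass from \eqref{valueest-8} to \eqref{inequ-3}: one has to be careful that the bound $R_k^{1/2} \geq 2 R_j$ used in the ratio $R_k/R_j$ requires $j \leq k-1$ (it fails for $j=k$), which is why the $F_k$-factor contributes only $2^{(n_k-2)l_k}$ in \eqref{inequ-3} while the earlier factors contribute the strengthened $2^{(2n_j-2)l_j}$. Everything else is direct substitution and the two product estimates from the previous section are not needed here.
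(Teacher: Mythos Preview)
Your proposal is correct and follows essentially the same approach as the paper: both argue factor by factor on $|z|=2R_k$, using \eqref{critical-1} for $F_0$, the triangle inequality \eqref{inequ-2} for each $F_j$, and the growth estimate $R_k^{1/2}\geq 2R_j$ (from \eqref{inequ-1}) to handle the ratios $R_k/R_j$. The only cosmetic difference is ordering---the paper derives \eqref{inequ-3} and \eqref{valueest-8} in parallel from the common intermediate bound $\tfrac12\lambda^{m^*}(2R_k)^m(2^{n_k-1}-1)^{l_k}\prod_{j<k}\bigl(\tfrac12(2R_k/R_j)^{n_j}-1\bigr)^{l_j}$, whereas you first obtain \eqref{valueest-8} and then deduce \eqref{inequ-3} from it; the arithmetic is identical.
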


\begin{proof}
By \eqref{critical-1}, \eqref{inequ-1}, \eqref{inequ-2}, one has $\sqrt{R_{k}}\geq 2R_j$, $\tfrac{R_k}{R_j}\geq2\sqrt{R_k}$, $k> j\geq1$, and
\begin{align*}
 R_{k+1}&=\max_{|z|=2R_k}|f_k(z)|\nonumber\\
&\geq\max_{|z|=2R_k}|F_0(z)|\cdot \bigg[\prod^{k}_{j=1}\min_{|z|=2R_k}|F_j(z)|\bigg]\nonumber\\
&\geq \frac{1}{2}\ld^{m^{*}}\cdot(2R_k)^{m} \cdot \bigg[\prod^{k}_{j=1}\bigg(\frac{1}{2}\bigg(\frac{2R_k}{R_j}\bigg)^{n_j}-1\bigg)\bigg]\nonumber\\
&\geq  \frac{1}{2}\ld^{m^{*}}\cdot(2R_k)^{m}\cdot(2^{n_k-1}-1)\cdot\bigg[\prod^{k-1}_{j=1}\bigg(\frac{1}{2}\bigg(\frac{2R_k}{R_j}\bigg)^{n_j}-1\bigg)\bigg]\nonumber\\
&\geq\frac{1}{2}\ld^{m^{*}}\cdot(2R_k)^{m}\cdot2^{(n_k-2)}\cdot \bigg[\prod^{k-1}_{j=1}\bigg(2^{2n_j-1}R_k^{n_j/2}-1\bigg)\bigg]\nonumber\\
&\geq\frac{1}{2}\ld^{m^{*}}\cdot(2R_k)^{m}\cdot2^{(n_k-2)}\cdot \bigg[\prod^{k-1}_{j=1}(2^{2n_j-2}R_k^{n_j/2})\bigg]\nonumber\\
&=\ld^{m^{*}}\cdot2^{(\sum^{k-1}_{j=1}(2n_j-2))+(n_k-2)+m-1}\cdot R_k^{(\sum^{k-1}_{j=1}n_j)/2+m},
\end{align*}
and

\begin{align*}
 R_{k+1}&\geq  \frac{1}{2}\ld^{m^{*}}\cdot(2R_k)^{m}\cdot(2^{n_k-1}-1)\cdot\bigg[\prod^{k-1}_{j=1}\bigg(\frac{1}{2}\bigg(\frac{2R_k}{R_j}\bigg)^{n_j}-1\bigg)\bigg]\nonumber\\
&\geq\frac{1}{2}\ld^{m^{*}}\cdot(2R_k)^{m}\cdot(2^{n_k-2})\cdot\bigg[\prod^{k-1}_{j=1}\bigg(\frac{1}{4}\bigg(\frac{2R_k}{R_j}\bigg)^{n_j}\bigg)\bigg]\nonumber\\
&=\frac{1}{2} \ld^{m^{*}}\cdot \bigg(\frac{1}{4}\bigg)^{k}\cdot (2R_{k})^{m+\sum^{k-1}_{j=1}n_{j}} \cdot 2^{n_{k}}\cdot\bigg[\prod^{k-1}_{j=1}{R_j}^{-n_j}\bigg]\\
&= \ld^{m^{*}}\cdot 2^{m-1+\sum^{k}_{j=1}(n_{j}-2)} \cdot  R_{k}^{m+\sum^{k-1}_{j=1}n_{j}}\cdot\bigg[\prod^{k-1}_{j=1}{R_j}^{-n_j}\bigg].
\end{align*}
On the other hand, one has
\begin{align*}
 R_{k+1}&=\max_{|z|=2R_k}|f_k(z)|\\
&\leq \max_{|z|=2R_k}|F_0(z)|\cdot\bigg[ \prod^{k}_{j=1}\max_{|z|=2R_k}|F_j(z)|\bigg]\\
&\leq \frac{3}{2}\ld^{m^{*}}\cdot(2R_k)^{m} \cdot \bigg[\prod^{k}_{j=1}\bigg(\frac{1}{2}\bigg(\frac{2R_k}{R_j}\bigg)^{n_j}+1\bigg)\bigg]\\
&\leq \frac{3}{2}\ld^{m^{*}}\cdot(2R_k)^{m} \cdot \bigg[\prod^{k}_{j=1}\bigg(\bigg(\frac{2R_k}{R_j}\bigg)^{n_j}\bigg)\bigg]\\
&\leq  \frac{3}{2}\ld^{m^{*}}\cdot(2R_k)^{m+\sum^k_{j=1}n_j}\cdot\bigg[\prod^{k}_{j=1} R^{-n_j}_j\bigg].
\end{align*}

\end{proof}

\begin{corollary}\label{degreeest-8-4-1}
Suppose the assumption (A*) holds, for the above $\{R_k\}$, one has
\beq\label{inequ-19}
m_{k-1}=m+\sum^{k-1}_{j=1}n_j\leq2\frac{\log R_{k+1}}{\log R_{k}}
\eeq
and
\beq\label{inequ-20}
m_{k}=m+\sum^{k}_{j=1}n_j<\frac{\log R_{k+1}}{\log 2}.
\eeq
\end{corollary}

\begin{proof}
By \eqref{inequ-3}, one has
\beq
R_{k+1}\geq\ld^{m^{*}}\cdot2^{(\sum^{k-1}_{j=1}(2n_j-2))+(n_k-2)+m-1}\cdot R_k^{(\sum^{k-1}_{j=1}n_j)/2+m},
\eeq
so,
\begin{align*}
\log R_{k+1}&\geq \log\ld^{m^{*}}+\log 2^{(\sum^{k-1}_{j=1}(2n_j-2))+(n_k-2)+m-1}+\log R_k^{(\sum^{k-1}_{j=1}n_j)/2+m}\\
&\geq\bigg (\frac{1}{2}\bigg(\sum^{k-1}_{j=1}n_j\bigg)+m\bigg)\cdot\log R_k
\geq\frac{1}{2}\bigg (\bigg(\sum^{k-1}_{j=1}n_j\bigg)+m\bigg)\cdot\log R_k,
\end{align*}
implying that \eqref{inequ-19} holds.

Further, by \eqref{valueest-8} and \eqref{inequ-1}, one has
\begin{align*}
R_{k+1}&\geq\ld^{m^{*}}\cdot 2^{m-1+\sum^{k}_{j=1}(n_{j}-2)} \cdot  R_{k}^{m+\sum^{k-1}_{j=1}n_{j}}\cdot\bigg[\prod^{k-1}_{j=1}{R_j}^{-n_j}\bigg]\\
=&\ld^{m^{*}} \cdot 2^{m+\sum^{k}_{j=1}n_{j}} \cdot\frac{R_k^m}{2\cdot 4^k}\cdot\bigg[\prod^{k-1}_{j=1}\bigg(\frac{R_k}{R_j}\bigg)^{n_j}\bigg]\\
>& 2^{m+\sum^{k}_{j=1}n_{j}}.
\end{align*}
So,  \eqref{inequ-20} holds.
\end{proof}

\begin{lemma}\label{seriescon-8-4-2}
Given any positive real number $\al$. For any $k\geq1$, one has
\begin{align*}
\sum_{q\geq1} \frac{2^q(\log R_{k+q-1})^2}{R^{\alpha}_{k+q-1}}<+\infty.
\end{align*}
Further, the sum of the series tends to zero as $R\to+\infty$.
\end{lemma}

\begin{proof}
Let $a_{q}=\tfrac{2^q(\log R_{k+q-1})^2}{R^{\alpha}_{k+q-1}}$, $q\geq1$. Applying the ratio test,  we show
\begin{align*}
\frac{a_{q+1}}{a_q}=2\cdot\frac{(\log R_{k+q})^2}{(\log R_{k+q-1})^2} \cdot\frac{R^{\al}_{k+q-1}}{R^{\al}_{k+q}}<1.
\end{align*}

By \eqref{valueest-11}, one has
\begin{align*}
&\frac{\log R_{k+q}}{\log R_{k+q-1}}\leq\frac{\log\big(\frac{3}{2}\cdot\ld^{m^{*}}\cdot2^{m+\sum^{k+q-1}_{j=1}n_j}\cdot(R_{k+q-1})^{m+\sum^{k+q-2}_{j=1}n_j}\big)}{\log R_{k+q-1}}\\
\leq&\frac{\log\big(\frac{3}{2}\ld^{m^{*}}\big)}{\log R_{k+q-1}}+\frac{\big(m+\sum^{k+q-1}_{j=1}n_j\big)\cdot\log2}{\log R_{k+q-1}}+\frac{\big(m+\sum^{k+q-2}_{j=1}n_j\big)\cdot\log(R_{k+q-1})}{\log R_{k+q-1}}\\
\leq&2\bigg(m+\sum^{k+q-2}_{j=1}n_j\bigg)+\frac{\big(m+\sum^{k+q-1}_{j=1}n_j\big)\cdot\log2}{\log R_{k+q-1}}. ({\color{red}\star})
\end{align*}

By \eqref{inequ-3}, one has
\begin{align*}
 \frac{R_{k+q-1}}{R_{k+q}}\leq\ld^{-m^{*}}\cdot2^{-[(\sum^{k+q-2}_{j=1}(2n_j-2))+(n_{k+q-1}-2)+m-1]}\cdot R_{k+q-1}^{-(\sum^{k+q-2}_{j=1}n_j)/2-m+1}.
\end{align*}

So,
\begin{align*}
&\frac{a_{q+1}}{a_q}\leq 8\bigg(m+\sum^{k+q-2}_{j=1}n_j\bigg)^2
\cdot R_{k+q-1}^{-\al[(\sum^{k+q-2}_{j=1}n_j)/2+m-1]}\\
&+4\bigg(\frac{\big(m+\sum^{k+q-1}_{j=1}n_j\big)\log2}{\log R_{k+q-1}}\bigg)^2\cdot2^{-\al[(\sum^{k+q-2}_{j=1}(2n_j-2))+(n_{k+q-1}-2)+m-1]}\\ &\times R_{k+q-1}^{-\al[(\sum^{k+q-2}_{j=1}n_j)/2+m-1]}\\
\leq& 8\bigg(m+\sum^{k+q-2}_{j=1}n_j\bigg)^2
\cdot (R_{k+q-1}^{-\al/2})^{(m+\sum^{k+q-2}_{j=1}n_j)}\\
&+16\bigg(\frac{\big(m+\sum^{k+q-1}_{j=1}n_j\big)}{\log R_{k+q-1}}\bigg)^2\cdot2^{-(\al/2)[m+\sum^{k+q-1}_{j=1}n_j]}\cdot (R_{k+q-1}^{-\al/2})^{(m+\sum^{k+q-2}_{j=1}n_j)}\to0,
\end{align*}
since $\lim_{n\to\infty}\tfrac{n^2}{x^n}=0$ for any $x>1$, the ratio test is satisfied and the sum is convergent for sufficiently large $R>1$.

Since $\sum^{\infty}_{n=1}\frac{n^2}{x^n}\leq x^2\sum^{\infty}_{n=1}\frac{n(n+1)}{x^{n+2}}$ for $x>1$, and $(\tfrac{1}{x^n})^{\prime\prime}=\tfrac{n(n+1)}{x^{n+2}}$, one has that $\sum^{\infty}_{n=1}\frac{n^2}{x^n}\leq\tfrac{2x^2}{(x-1)^3}$ for $x>1$.
Hence, the sum tends to zero as $R\to\infty$.
\end{proof}

\subsection{Geometry of Chebyshev polynomial $T_2(z)=2z^2-1$}

In this subsection, the geometric structure of  $T_2(z)=2z^2-1$ is studied. This part follows the main idea of Section 10 in Bishop's work \cite{Bishop2018}. The main idea is the function $F_k$ for $z\in A_k$ can be written in the form of  $C'\cdot T_2(z^{l_{*}})\cdot z^{l_{**}}$, where $C'$ is a constant, $l_{*}$ and $l_{**}$ are two integers, which are dependent on $F_k$ (see \eqref{valueest-3} and \eqref{equ2021-8-26-1}). This is useful in the understanding of the geometric structure of the Fatou and Julia sets.

Denote by $z_2=-1/\sqrt{2}$ the left root of $T_2$, $w_2=0$ the critical point of $T_2$, $\Om_2$ the component of $\{z:\ |T_2(z)|<1\}$ containing $z_2$. Set
\beqq
r_2:=\mbox{dist}(z_2,-1)=1-\frac{1}{\sqrt{2}},\ \widetilde{r}_2:=\mbox{dist}(z_2,w_2)=\frac{1}{\sqrt{2}},
\eeqq
\beqq
D_2:=D(z_2,r_2)=D\bigg(-\frac{1}{\sqrt{2}},1-\frac{1}{\sqrt{2}}\bigg),\ \widetilde{D}_2:=D(z_2,\widetilde{r}_2)=D\bigg(-\frac{1}{\sqrt{2}},\frac{1}{\sqrt{2}}\bigg),
\eeqq
where $D(z_2,r_2)$ is a ball with center $z_2$ and radius $r_2$, $\widetilde{D}_2$ is defined similarly.

\begin{figure}
\begin{center}
\begin{tikzpicture}

\node at (0,0) {$\maltese$};
\node at (0.3,-0.3) {$w_2$};
\node at  (-2.3,0) {$\bullet$};
\node at  (-2.3,-0.3) {$z_2$};

\draw (2, 0) circle (2);
\draw (-2, 0) circle (2);

\draw[dashed] (-3, 0) circle (3);

\draw[dashed] (-2.3, 0) circle (0.9);

\node at  (-3,0) {$D_2$};

\node at  (-6.2,0) {$\widetilde{D}_2$};

\node at  (-0.8,0) {$\Om_2$};

\end{tikzpicture}
\end{center}
\caption{An illustration diagram of $D_2$ and $\widetilde{D}_2$, where the black dot is $z_2$, $\maltese$ represents $w_2$, the real black curve represents
$\{z:\ |T_2(z)|=1\}$, the dashed lines represent $D_2$ and $\widetilde{D}_2$, $\Om_2$ lies in the left lobe of the curve and between $D_2$ and $\widetilde{D}_2$.}\label{fatoucompqua}
\end{figure}

\begin{lemma}\cite[Lemma 9.1]{Bishop2018} For the polynomial $T_2(z)$, one has that
$|T_2|\geq1$ on $\partial\widetilde{D}_2$ and $|T_2|\leq1$ on $\partial D_2$. Thus, $D_2\subset\Om_2\subset\widetilde{D}_2$.
\end{lemma}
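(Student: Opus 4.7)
The plan is to reduce both boundary estimates to one-variable trigonometric calculations via a convenient parametrization, and then to extract the two inclusions using the maximum modulus principle together with a short connectedness argument.

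First I would parametrize points on either relevant circle by $z=-\tfrac{1}{\sqrt{2}}+re^{i\theta}$ with $r\in\{r_2,\tilde r_2\}$, and factor
\beqq
T_2(z)=2z^{2}-1=2\bigl(z-\tfrac{1}{\sqrt{2}}\bigr)\bigl(z+\tfrac{1}{\sqrt{2}}\bigr)=2re^{i\theta}\bigl(re^{i\theta}-\sqrt{2}\bigr),
\eeqq
so that
\beqq
|T_2(z)|^{2}=4r^{2}\bigl(r^{2}-2\sqrt{2}\,r\cos\theta+2\bigr).
\eeqq
On $\partial\tilde D_2$ with $r=1/\sqrt{2}$, this specializes to $|T_2(z)|^{2}=5-4\cos\theta\geq 1$, with equality only at $\theta=0$, i.e.\ at the critical point $w_2=0$ where $T_2(0)=-1$; this yields $|T_2|\geq 1$ on $\partial\tilde D_2$. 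On $\partial D_2$ with $r=1-1/\sqrt{2}$, the right-hand side is a linearly increasing function of $-\cos\theta$, so it attains its maximum at $\theta=\pi$, which corresponds to $z=-1$ and $T_2(-1)=1$; a direct substitution confirms this maximum equals exactly $1$, giving $|T_2|\leq 1$ on $\partial D_2$ with equality only at $z=-1$.

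For the inclusions, note that $T_2$ is a nonconstant polynomial, so the maximum modulus principle applied on $\overline{D_2}$ upgrades the boundary inequality $|T_2|\leq 1$ to the strict bound $|T_2|<1$ throughout the interior of $D_2$. Hence $D_2\subset\{|T_2|<1\}$, and since $D_2$ is connected and contains $z_2\in\Omega_2$, we conclude $D_2\subset\Omega_2$. For $\Omega_2\subset\tilde D_2$, I would argue by contradiction: if some $p\in\Omega_2\setminus\tilde D_2$ existed, then any path in $\Omega_2$ joining $z_2\in\tilde D_2$ to $p$ would have to meet $\partial\tilde D_2$ at some point $q$; membership $q\in\Omega_2$ forces $|T_2(q)|<1$, while the first boundary estimate forces $|T_2(q)|\geq 1$, a contradiction.

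The individual computations are elementary, so no serious obstacle is anticipated; the only point requiring genuine attention is the correct handling of the boundary equalities at $w_2=0\in\partial\tilde D_2$ and at $z=-1\in\partial D_2$ when promoting the weak maximum modulus estimate to the strict interior bound needed for $D_2\subset\Omega_2$, and when ensuring that the equality $|T_2(w_2)|=1$ at the critical point does not obstruct the path-crossing argument for $\Omega_2\subset\tilde D_2$.
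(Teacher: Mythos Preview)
Your proof is correct. The paper itself does not supply a proof of this lemma; it is merely quoted from \cite{Bishop2018}, so there is nothing in the present paper to compare against. Your parametrization $z=-\tfrac{1}{\sqrt{2}}+re^{i\theta}$ and the resulting identity $|T_2(z)|^{2}=4r^{2}(r^{2}-2\sqrt{2}\,r\cos\theta+2)$ reduce both boundary estimates to one-line trigonometric checks, and your subsequent use of the maximum modulus principle together with the connectedness/path argument cleanly delivers the two inclusions. The caveat you raise about the boundary equalities is harmless: since $|T_2(w_2)|=1$ and $|T_2(-1)|=1$, neither $w_2$ nor $-1$ can lie in $\Omega_2=\{|T_2|<1\}$, so the crossing point $q$ in your contradiction argument automatically satisfies $|T_2(q)|\geq 1>|T_2(q)|$, and the strict interior bound on $D_2$ follows from the open mapping theorem even though equality holds at the single boundary point $-1$.
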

Let $\widetilde{m}$ be a positive integer. Set
\beq\label{valueest-3}
H_{\widetilde{m}}(z):=-T_2(\widetilde{r}_2z^{\widetilde{m}}+z_2)=-T_2(\tfrac{1}{\sqrt{2}}z^{\widetilde{m}}-\tfrac{1}{\sqrt{2}})=z^{\widetilde{m}}(2-z^{\widetilde{m}}).
\eeq
The derivative is
\beqq
H^{\prime}_{\widetilde{m}}(z)=\widetilde{m}z^{{\widetilde{m}}-1}(2-z^{\widetilde{m}})+z^{\widetilde{m}}(-{\widetilde{m}}z^{{\widetilde{m}}-1})=2{\widetilde{m}}z^{\widetilde{m}-1}(1-z^{\widetilde{m}}),
\eeqq
this means that all the non-zero critical points are on the unit circle.

\begin{definition}\label{petal731-1}
The complement of the level curve $\ga_{\widetilde{m}}=\{z:\ |H_{\widetilde{m}}(z)|=1\}$ is an open set, denoted by $\Om_{\widetilde{m}}=\mathbb{C}\setminus\ga_{\widetilde{m}}$, with $\widetilde{m}+2$ connected components, a central component containing $0$ is denoted by $\Om^0_{\widetilde{m}}$, an unbounded component containing infinity is denoted by $\Om^{\infty}_{\widetilde{m}}$ , and $\widetilde{m}$ other bounded components are called the petals of $\Om_{\widetilde{m}}$. There exists one and only one critical point on each petal, the union of these $\widetilde{m}$ petals is denoted by $\Om^p_{\widetilde{m}}$.
\end{definition}

\begin{remark}\label{petal-8-6-1}
$H_{\widetilde{m}}$ is an $\widetilde{m}$-to-$1$ branched covering map from $\Om^0_{\widetilde{m}}$ to $\mathbb{D}$ with a single critical point at the origin, and is conformal from the interior of each petal to $\mathbb{D}$.
\end{remark}

\begin{figure}
\begin{center}
\begin{tikzpicture}
\draw (0, 0) circle (2);

\foreach \j in {1,2,3,4,5}
{
 \node at  (72*\j:2.2) {$\j$};
  \draw (72*\j:2.3) circle (0.3);
};

\end{tikzpicture}
\end{center}
\caption{An illustration diagram of the level of the form $\{z:\ |T_2(z^5)|=1\}$}\label{fatoucomplevel5}
\end{figure}

\begin{figure}
\begin{center}
\begin{tikzpicture}
\draw (0, 0) circle (2);

\foreach \j in {1,2,3,...,10}
{
  \node at  (36*\j:2.2) {$\j$};
  \draw (36*\j:2.3) circle (0.3);
};

\end{tikzpicture}
\end{center}
\caption{An illustration diagram of the level of the form $\{z:\ |T_2(z^{10})|=1\}$}\label{fatoucomplevel10}
\end{figure}

\begin{lemma}\cite[Lemma 9.2]{Bishop2018}\label{inequ-17}
 \beqq
\bigg\{z:\ |z|<1-\frac{1}{\widetilde{m}}\bigg\}\subset\Om^0_{\widetilde{m}}\subset\mathbb{D}.
\eeqq
\end{lemma}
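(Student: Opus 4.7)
The plan is to verify the two inclusions separately by direct estimation of $|H_{\widetilde{m}}(z)|$ against $1$, exploiting the factored form $H_{\widetilde{m}}(z)=z^{\widetilde{m}}(2-z^{\widetilde{m}})$ which reduces everything to a function of $|z|^{\widetilde{m}}$.

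For the outer inclusion $\Om^0_{\widetilde{m}}\subset\mathbb{D}$, the strategy is to show the unit circle lies in the closed set $\{|H_{\widetilde{m}}|\geq 1\}$. On $|z|=1$ we can write $z^{\widetilde{m}}=e^{i\theta}$, so
\beqq
|H_{\widetilde{m}}(z)|=|e^{i\theta}(2-e^{i\theta})|=|2-e^{i\theta}|=\sqrt{5-4\cos\theta}\geq 1,
\eeqq
with equality precisely at the $\widetilde{m}$-th roots of unity, which lie on $\ga_{\widetilde{m}}$. Since $\Om^0_{\widetilde{m}}$ is a connected open subset of $\{|H_{\widetilde{m}}|<1\}$ containing $0$, and is therefore disjoint from $\{|z|=1\}$, connectedness forces it to remain inside $\mathbb{D}$.

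For the inner inclusion $\{|z|<1-1/\widetilde{m}\}\subset\Om^0_{\widetilde{m}}$, the plan is to show the disk lies inside $\{|H_{\widetilde{m}}|<1\}$; since the disk is connected and contains $0$, it will then be contained in the component $\Om^0_{\widetilde{m}}$. Setting $u=|z|^{\widetilde{m}}$, the triangle inequality gives
\beqq
|H_{\widetilde{m}}(z)|\leq u(2+u),
\eeqq
which is strictly less than $1$ provided $u<\sqrt{2}-1$. The needed auxiliary fact is the classical bound $(1-1/\widetilde{m})^{\widetilde{m}}\leq 1/e$, valid for all $\widetilde{m}\geq 2$; this is established by differentiating $x\log(1-1/x)$ to see that $(1-1/x)^x$ is increasing in $x$ with supremum $1/e$. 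Since $1/e\approx 0.368<\sqrt{2}-1\approx 0.414$, the chain $u\leq(1-1/\widetilde{m})^{\widetilde{m}}\leq 1/e<\sqrt{2}-1$ yields $|H_{\widetilde{m}}(z)|<1$ uniformly on the disk. The case $\widetilde{m}=1$ is vacuous since then $1-1/\widetilde{m}=0$.

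There is no real obstacle here: both inclusions reduce to one-variable estimates on the polynomial $u(2-u)$ (or more precisely $u(2+u)$ after taking absolute values), and the only mild technical point is invoking the monotonicity of $(1-1/\widetilde{m})^{\widetilde{m}}$ to obtain a uniform bound independent of $\widetilde{m}$. Because the comparison value $\sqrt{2}-1$ safely exceeds $1/e$, no sharp asymptotic analysis is required.
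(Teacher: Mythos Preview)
Your proof is correct. The paper itself does not provide a proof of this lemma; it simply cites \cite[Lemma 9.2]{Bishop2018} and states the result without argument. Your direct verification via $|2-e^{i\theta}|=\sqrt{5-4\cos\theta}\ge 1$ on the unit circle and the one-variable bound $u(2+u)<1$ for $u\le (1-1/\widetilde{m})^{\widetilde{m}}<1/e<\sqrt{2}-1$ on the inner disk is exactly the elementary estimate one expects here, and it stands on its own.
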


\begin{lemma}\cite[Lemma 9.3]{Bishop2018}\label{inequ-18}
\beqq
\bigg\{z:\ |z|>1-\frac{1}{\widetilde{m}}\bigg\}\supset\Om^{\infty}_{\widetilde{m}}\supset\bigg\{z:\ |z|>1+\frac{2}{{\widetilde{m}}}\bigg\},\ \widetilde{m}\geq2.
\eeqq
\end{lemma}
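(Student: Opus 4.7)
The plan is to treat the two inclusions separately, using Lemma \ref{inequ-17} for the outer one and a short Bernoulli-inequality calculation for the inner one.

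For the outer inclusion $\Om^{\infty}_{\widetilde{m}}\subset\{|z|>1-1/\widetilde{m}\}$, the key observation is that Lemma \ref{inequ-17} places the open disk $\{|z|<1-1/\widetilde{m}\}$ inside the central component $\Om^0_{\widetilde{m}}$. Since $\Om^0_{\widetilde{m}}$ and $\Om^{\infty}_{\widetilde{m}}$ are distinct connected components of the open set $\mathbb{C}\setminus\ga_{\widetilde{m}}$, they are disjoint. Hence $\Om^{\infty}_{\widetilde{m}}$ cannot meet the open disk, nor its boundary circle $\{|z|=1-1/\widetilde{m}\}$: every point on that circle is a limit of points of $\Om^0_{\widetilde{m}}$, so any open neighborhood of it would meet $\Om^0_{\widetilde{m}}$, which is incompatible with the disjointness of the two open components. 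This yields the required strict inequality.

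For the inner inclusion $\{|z|>1+2/\widetilde{m}\}\subset\Om^{\infty}_{\widetilde{m}}$, I would prove directly that $|H_{\widetilde{m}}(z)|>1$ throughout this neighborhood of $\infty$. Setting $w=z^{\widetilde{m}}$, the factorization $H_{\widetilde{m}}(z)=w(2-w)$ together with the reverse triangle inequality gives $|H_{\widetilde{m}}(z)|\geq|w|(|w|-2)$ whenever $|w|>2$. Bernoulli's inequality yields
\[
|w|=|z|^{\widetilde{m}}>\bigg(1+\frac{2}{\widetilde{m}}\bigg)^{\widetilde{m}}\geq 1+\widetilde{m}\cdot\frac{2}{\widetilde{m}}=3,
\]
so $|H_{\widetilde{m}}(z)|\geq 3\cdot 1=3>1$ on this set. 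Consequently $\{|z|>1+2/\widetilde{m}\}$ avoids the level curve $\ga_{\widetilde{m}}$; being connected and containing $\infty$, it must lie entirely in the unique unbounded component $\Om^{\infty}_{\widetilde{m}}$ of $\mathbb{C}\setminus\ga_{\widetilde{m}}$.

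No step is a serious obstacle. The only mild points requiring care are the strictness of the outer inclusion (handled by openness of $\Om^{\infty}_{\widetilde{m}}$ combined with the limit-point argument above) and the tightness of the Bernoulli bound for all $\widetilde{m}\geq 2$; the latter is already saturated at $\widetilde{m}=2$, where $(1+1)^2=4\geq 3$, and only improves for larger $\widetilde{m}$.
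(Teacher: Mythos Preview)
Your proof is correct. The paper does not supply its own proof of this lemma; it is quoted verbatim from \cite[Lemma 9.3]{Bishop2018} without argument, so there is no in-paper proof to compare against. Your approach---using Lemma~\ref{inequ-17} together with openness and disjointness of the components for the outer inclusion, and a direct Bernoulli estimate $|z|^{\widetilde m}>(1+2/\widetilde m)^{\widetilde m}\ge 3$ giving $|H_{\widetilde m}(z)|\ge |w|(|w|-2)\ge 3>1$ for the inner inclusion---is essentially the natural elementary argument and matches the spirit of Bishop's original treatment.
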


\begin{corollary}\label{inequ-6}
The petal components are contained in the region:
\beq\label{inequ-5}
\Om^{p}_{\widetilde{m}}\subset\bigg\{z:\ |z|\geq1-\frac{1}{\widetilde{m}}\bigg\}\bigcap\bigg\{z:\ |z|\leq1+\frac{2}{{\widetilde{m}}}\bigg\},\ \widetilde{m}\geq2.
\eeq
\end{corollary}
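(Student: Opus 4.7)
The plan is to derive this corollary as an essentially immediate consequence of the two preceding Lemmas (\ref{inequ-17}) and (\ref{inequ-18}), using only the fact that the set $\Om_{\widetilde{m}}=\mathbb{C}\setminus\ga_{\widetilde{m}}$ decomposes into $\widetilde{m}+2$ pairwise disjoint connected components: the central component $\Om^0_{\widetilde{m}}$, the unbounded component $\Om^{\infty}_{\widetilde{m}}$, and the $\widetilde{m}$ petals whose union is $\Om^p_{\widetilde{m}}$. The proof is a two-line disjointness argument, so there is no real obstacle here.

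First, I would establish the lower radial bound. By Lemma (\ref{inequ-17}) the open disk $\{z:\ |z|<1-1/\widetilde{m}\}$ is contained in $\Om^0_{\widetilde{m}}$. Since the petals form connected components of $\Om_{\widetilde{m}}$ distinct from $\Om^0_{\widetilde{m}}$, and distinct connected components of an open set are disjoint, it follows that $\Om^p_{\widetilde{m}}\cap\{z:\ |z|<1-1/\widetilde{m}\}=\emptyset$, which is precisely the statement $\Om^p_{\widetilde{m}}\subset\{z:\ |z|\geq1-1/\widetilde{m}\}$.

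Next, I would establish the upper radial bound in exactly the same manner. By Lemma (\ref{inequ-18}), $\{z:\ |z|>1+2/\widetilde{m}\}\subset\Om^{\infty}_{\widetilde{m}}$. Since the petals are connected components of $\Om_{\widetilde{m}}$ distinct from $\Om^{\infty}_{\widetilde{m}}$, they are disjoint from $\Om^{\infty}_{\widetilde{m}}$, and in particular from the set $\{z:\ |z|>1+2/\widetilde{m}\}$. Hence $\Om^p_{\widetilde{m}}\subset\{z:\ |z|\leq1+2/\widetilde{m}\}$.

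Intersecting the two inclusions derived in the previous steps yields the desired containment for $\widetilde{m}\geq2$. The only subtlety worth flagging is that one should be careful about whether petals can meet the boundary circles $\{|z|=1-1/\widetilde{m}\}$ or $\{|z|=1+2/\widetilde{m}\}$; but this is irrelevant because the stated inclusion uses the closed complements, so even if parts of $\partial\Om^p_{\widetilde{m}}$ touched these circles the statement would still hold. No other estimates on $H_{\widetilde{m}}$ are needed beyond what has already been proved.
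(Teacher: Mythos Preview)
Your proposal is correct and matches the paper's intent: the paper states this corollary without proof, treating it as an immediate consequence of Lemmas~\ref{inequ-17} and~\ref{inequ-18} via exactly the disjointness-of-components argument you describe.
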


\begin{lemma}
Suppose (A*) holds. For $z\in A_k$, one has
\beq\label{valueest-4}
f(z)=C_k\cdot z^{s_k}\cdot\bigg(H_{n_k}\cdot\bigg(\frac{z}{R_k}\bigg)\bigg)\cdot(1+O(R^{-1}_k)),
\eeq
where
\beq\label{valueest-5}
C_k=\ld^{m^{*}}\cdot (-1)^{k-1}\cdot \bigg(\frac{1}{2}\bigg)^{k}\cdot R_k^{n_k}\cdot\bigg(\prod^{k-1}_{j=1} R^{-n_j}_j\bigg)
\eeq
and
\beq\label{exp8-20-1}
s_k=-n_k+\bigg(m+\sum^{k-1}_{j=1}n_j\bigg)=m_{k-1}-n_k.
\eeq
\end{lemma}

\begin{proof}
Rewrite
\beqq
f(z)=\prod^{\infty}_{k=0}F_k(z)=F_0(z)\cdot \bigg[\prod^{k-1}_{j=1}F_j(z)\bigg]\cdot F_k(z) \cdot\bigg[ \prod^{\infty}_{j=k+1}F_j(z)\bigg]
\eeqq
and
\begin{align*}
 f_k(z)=F_0(z)\cdot\bigg[\prod^{k-1}_{j=1}F_j(z)\bigg]\cdot F_k(z)=z^{-m}\cdot F_0(z)\cdot\bigg[\prod^{k-1}_{j=1}z^{-n_j}F_j(z)\bigg]\cdot z^{m+\sum^{k-1}_{j=1}n_j}\cdot F_k(z).
\end{align*}
So, by \eqref{critical-1}, one has, for $z\in A_k$,
\beqq
z^{-m}\cdot F_0(z)=\ld^{m^{*}}(1+O(R^{-1}_k)).
\eeqq
By \eqref{valueest-2}, one has, for $z\in A_k$,
\begin{align*}
&\prod^{k-1}_{j=1}(z^{-n_j}\cdot F_j(z))
=\prod^{k-1}_{j=1}\bigg( \frac{1}{(-2)}\cdot R^{-n_j}_j\cdot\bigg(1+O\bigg(\bigg(\frac{R_j}{R_k}\bigg)^{n_j}\bigg)\bigg)\bigg)\\
=&\frac{1}{(-2)^{k-1}}\cdot\bigg[ \prod^{k-1}_{j=1} R^{-n_j}_j\bigg]\cdot
\bigg[\prod^{k-1}_{j=1}\bigg(1+O\bigg(\bigg(\frac{R_j}{R_k}\bigg)^{n_j}\bigg)\bigg)\bigg]\\
=&\frac{1}{(-2)^{k-1}}\cdot\bigg[ \prod^{k-1}_{j=1} R^{-n_j}_j\bigg]\cdot (1+2R_k^{-n_0/2}).
\end{align*}
By computation, one has, for $z\in A_k$,
\begin{align}\label{equ2021-8-26-1}
F_k(z)=&\bigg(1-\frac{1}{2}\bigg(\frac{z}{R_k}\bigg)^{n_k}\bigg)
=\bigg(\frac{R_k}{z}\bigg)^{n_k}\cdot\bigg(\frac{z}{R_k}\bigg)^{n_k}\cdot\bigg(1-\frac{1}{2}\bigg(\frac{z}{R_k}\bigg)^{n_k}\bigg)\nonumber\\
=&\bigg(\frac{R_k}{z}\bigg)^{n_k}\cdot\bigg[\bigg(\frac{z}{R_k}\bigg)^{n_k}\cdot\bigg(1-\frac{1}{2}\bigg(\frac{z}{R_k}\bigg)^{n_k}\bigg)\bigg]
=
\bigg(\frac{R_k}{z}\bigg)^{n_k}\cdot\bigg(\frac{1}{2}\bigg)\cdot\bigg(H_{n_k}\bigg(\frac{z}{R_k}\bigg)\bigg),
\end{align}
where $H_{n_k}$ is specified in \eqref{valueest-3}, implying that
\begin{align*}
&z^{m+\sum^{k-1}_{j=1}n_j}\cdot F_k(z)=z^{m+\sum^{k-1}_{j=1}n_j} \cdot\bigg(\frac{R_k}{z}\bigg)^{n_k}\cdot\bigg(\frac{1}{2}\bigg)\cdot\bigg(H_{n_k}\bigg(\frac{z}{R_k}\bigg)\bigg)\\
=&R_k^{n_k}\cdot \bigg(\frac{1}{z}\bigg)^{n_k-(m+\sum^{k-1}_{j=1}n_j)}\cdot\bigg(\frac{1}{2}\bigg)\cdot\bigg(H_{n_k}\bigg(\frac{z}{R_k}\bigg)\bigg)\\
=&R_k^{n_k}\cdot z^{s_k}\cdot\bigg(\frac{1}{2}\bigg)\cdot\bigg(H_{n_k}\bigg(\frac{z}{R_k}\bigg)\bigg).
\end{align*}

These discussions, together with \eqref{valueest-1}, yield that \eqref{valueest-4} holds.
\end{proof}

\begin{remark}\label{geofact-8-6-2}
Suppose $n_k-(m+\sum^{k-1}_{j=1}n_j)>0$. By Definition \ref{petal731-1} and Remark \ref{petal-8-6-1},
the map $\big(\frac{1}{z}\big)^{n_k-(m+\sum^{k-1}_{j=1}n_j)}\cdot H_{n_k}\big(\frac{z}{R_k}\big)$ in \eqref{valueest-4}
 is an $(2n_k-(m+\sum^{k-1}_{j=1}n_j))$-to-$1$ branched covering map from $R_k\cdot\Om^0_{n_k}$ to $\mathbb{D}$ with a single critical point at the origin, and is conformal from the interior of each petal to $\mathbb{D}$.  If $n_k-(m+\sum^{k-1}_{j=1}n_j)<0$, then similar conclusions hold for smaller subsets of $R_k\cdot\Om^0_{n_k}$ and $R_k\cdot\Om^p_{n_k}$. This geometric fact will be repeated used in the following discussions of the structure of the Julia and Fatou sets.
\end{remark}

\begin{lemma}\label{valueest-6}
For $|z|=\tau R_k$, where $\tau>1$ is a positive constant, one has
\begin{itemize}
\item[(i)]
 \beq\label{valueest-7}
\bigg|z^{s_k}\cdot\bigg(H_{n_k}\bigg(\frac{z}{R_k}\bigg)\bigg)\bigg|
\leq4\cdot R_k^{s_k}\cdot\tau^{m+\sum^{k}_{j=1}n_j};
\eeq
\item[(ii)] further, if $\tau^{n_k}\geq4$, then
\beq\label{valueest-10}
\bigg|z^{s_k}\cdot\bigg(H_{n_k}\bigg(\frac{z}{R_k}\bigg)\bigg)\bigg|
\geq \bigg(\frac{1}{2}\bigg)\cdot R_k^{s_k}\cdot\tau^{m+\sum^{k}_{j=1}n_j}.
\eeq
\end{itemize}
\end{lemma}

\begin{remark}
In Case (ii), if $\tau^{n_k}<4$, then it is possible that $\big|\big(\frac{1}{z}\big)^{n_k-(m+\sum^{k-1}_{j=1}n_j)}\cdot\big(H_{n_k}\big(\frac{z}{R_k}\big)\big)\big|=0$, since we might meet the zeros of $H_{n_k}$.
\end{remark}

\begin{proof}
{\bf Case (i)} Consider the situation $\tau>1$. By direct computation, one has
 \begin{align*}
 &\bigg|z^{s_k}\cdot\bigg(H_{n_k}\bigg(\frac{z}{R_k}\bigg)\bigg)\bigg|\\
\leq&({\tau R_k})^{s_k}\cdot\bigg[\bigg(\frac{\tau R_k}{R_k}\bigg)^{n_k}\cdot\bigg(2+\bigg(\frac{\tau R_k}{R_k}\bigg)^{n_k}\bigg)\bigg]\\
=&({\tau R_k})^{s_k}\cdot[\tau^{n_k}\cdot(2+\tau^{n_k})]\\
\leq&({\tau R_k})^{s_k}\cdot[(\tau^{n_k}+1)^2]\\
\leq&({\tau R_k})^{s_k}\cdot[(2\tau^{n_k})^2]
=({\tau R_k})^{s_k}\cdot 4\cdot\tau^{2n_k}\\
=&4\cdot R_k^{s_k}\cdot\tau^{m+\sum^{k}_{j=1}n_j}.
\end{align*}

{\bf Case (ii)} For $\tau>1$ and $\tau^{n_k}\geq4$, one has
\begin{align*}
 &\bigg|z^{s_{k}}\cdot\bigg(H_{n_k}\bigg(\frac{z}{R_k}\bigg)\bigg)\bigg|\\
\geq&({\tau R_k})^{s_k}\cdot\bigg[\bigg(\frac{\tau R_k}{R_k}\bigg)^{n_k}\cdot\bigg(\bigg(\frac{\tau R_k}{R_k}\bigg)^{n_k}-2\bigg)\bigg]\\
=&({\tau R_k})^{s_k}\cdot[\tau^{n_k}\cdot(\tau ^{n_k}-2)]\\
\geq&({\tau R_k})^{s_k}\cdot\bigg[\tau^{n_k}\cdot\bigg(\frac{1}{2}\tau^{n_k}\bigg)\bigg]\\
=&\bigg(\frac{1}{2}\bigg)\cdot R_k^{s_k}\cdot\tau^{m+\sum^{k}_{j=1}n_j}.
\end{align*}
\end{proof}

\begin{lemma}\label{valueest-6-8-26-2}
For $|z|=\tau R_k$, where $\tau<1$ is a positive constant, one has
 \beq\label{valueest-15}
\bigg|z^{s_k}\cdot\bigg(H_{n_k}\bigg(\frac{z}{R_k}\bigg)\bigg)\bigg|
\leq3\cdot R_k^{s_k}\cdot\tau^{m+\sum^{k-1}_{j=1}n_j}
\eeq
and
\beq\label{valueest-13}
\bigg|z^{s_{k}}\cdot\bigg(H_{n_k}\bigg(\frac{z}{R_k}\bigg)\bigg)\bigg|\geq
R_k^{s_k}\cdot\tau^{m+\sum^{k-1}_{j=1}n_j}.
\eeq
\end{lemma}

\begin{proof}
Direct calculation tells us that
 \begin{align*}
 &\bigg|z^{s_k}\cdot\bigg(H_{n_k}\bigg(\frac{z}{R_k}\bigg)\bigg)\bigg|\\
\leq&({\tau R_k})^{s_k}\cdot\bigg[\bigg(\frac{\tau R_k}{R_k}\bigg)^{n_k}\cdot\bigg(2+\bigg(\frac{\tau R_k}{R_k}\bigg)^{n_k}\bigg)\bigg]\\
\leq&({\tau R_k})^{s_k}\cdot[3\tau^{n_k}]\\
=&3\cdot R_k^{s_k}\cdot\tau^{m+\sum^{k-1}_{j=1}n_j},
\end{align*}
and
\begin{align*}
 &\bigg|z^{s_{k}}\cdot\bigg(H_{n_k}\bigg(\frac{z}{R_k}\bigg)\bigg)\bigg|\\
\geq&({\tau R_k})^{s_k}\cdot\bigg[\bigg(\frac{\tau R_k}{R_k}\bigg)^{n_k}\cdot\bigg(2-\bigg(\frac{\tau R_k}{R_k}\bigg)^{n_k}\bigg)\bigg]\\
\geq&({\tau R_k})^{s_k}\cdot\tau^{n_k}\\
=&R_k^{s_k}\cdot\tau^{m+\sum^{k-1}_{j=1}n_j}.
\end{align*}
\end{proof}

\begin{lemma}\label{est-1}
Suppose (A*) holds.
For $\tfrac{3}{2}R_k\leq|z|\leq 4R_k$, the function $f(z)$ can be written as
\beq\label{inequ-21}
f(z)= C^*_k\cdot z^{(m+\sum^{k}_{j=1}n_j)}\cdot\bigg(1+O\bigg(\bigg(\frac{2}{3}\bigg)^{n_k}\bigg)\bigg)\cdot(1+O(R^{-1}_k)),
\eeq
where
\beqq\label{inequ-23}
C^*_k=\ld^{m^{*}}\cdot \bigg(-\frac{1}{2}\bigg)^{k} \cdot\bigg[\prod^{k}_{j=1} R^{-n_j}_j\bigg]
\eeqq
is specified in \eqref{valueest-5}.
\end{lemma}

\begin{proof}
By direct computation,  for $\tfrac{3}{2}R_k\leq|z|\leq 4R_k$, one has
\begin{align*}
&\bigg[\bigg(\frac{z}{R_k}\bigg)^{n_k}\cdot\bigg(2-\bigg(\frac{z}{R_k}\bigg)^{n_k}\bigg)\bigg]=
z^{2n_k}\cdot R^{-2n_k}_k\cdot \bigg(2\bigg(\frac{R_k}{z}\bigg)^{n_k}-1\bigg)\\
=&(-1)\cdot z^{2n_k}\cdot R^{-2n_k}_k\cdot \bigg(1-2\bigg(\frac{R_k}{z}\bigg)^{n_k}\bigg)
=(-1)\cdot z^{2n_k}\cdot R^{-2n_k}_k\cdot \bigg(1+O\bigg(\frac{R_k}{\frac{3}{2}R_k}\bigg)^{n_k}\bigg)\\
=&(-1)\cdot z^{2n_k}\cdot R^{-2n_k}_k\cdot\bigg(1+O\bigg(\bigg(\frac{2}{3}\bigg)^{n_k}\bigg)\bigg).
\end{align*}
This, together with \eqref{valueest-4} and \eqref{valueest-5}, implies \eqref{inequ-21}.
\end{proof}

In \eqref{inequ-21}, one has
\begin{align*}
&\bigg(1+O\bigg(\bigg(\frac{2}{3}\bigg)^{n_k}\bigg)\bigg)\cdot(1+O(R^{-1}_k))\\
=&
\bigg(1+O\bigg(\bigg(\frac{2}{3}\bigg)^{n_k}\bigg)\bigg)\cdot(1+O(R^{-1}_k))
=1+O\bigg(\bigg(\frac{2}{3}\bigg)^{n_k}\bigg)+O(R^{-1}_k).
\end{align*}
Set
\beq\label{appep-8-17-2}
\ep_k:=C\cdot\bigg(\bigg(\frac{2}{3}\bigg)^{n_k}+R^{-1}_k\bigg)
=C\cdot\bigg(\frac{1}{\big(\frac{3}{2}\big)^{n_k}}+R^{-1}_k\bigg),
\eeq
where $C$ is a positive constant.
In Lemma \ref{convergrad}, if $L_0=\tfrac{3}{2}$, then $\ep_k$ can be taken as small as we want if $R$ is sufficiently large, and $\sum_{k\geq1}\ep_k$ is convergent.

\begin{lemma}\label{criticalvk-8-6-3}
Suppose (A*) holds. Then, $f^{\prime}$ is non-zero on $V_k$ for $k\geq1$.
\end{lemma}

\begin{proof}
It follows from \eqref{inequ-21} that
\beqq
f(z)=C^*_k\cdot z^{m_k}\cdot(1+h_k(z)).
\eeqq
So,
\begin{align*}
f^{\prime}(z)&=(C^*_k\cdot z^{m_k}\cdot (1+h_k(z)))^{\prime}=C^*_k\cdot m_k\cdot z^{m_k-1}\cdot(1+h_k(z))+C^*_k\cdot z^{m_k}\cdot h^{\prime}_k(z)\\
&=C^*_k\cdot z^{m_k-1}\cdot [m_k\cdot(1+h_k(z))+h^{\prime}_k(z)].
\end{align*}
For $z\in V_k$, the above computation gives
\begin{align*}
 |f^{\prime}(z)|\geq& |C^*_k|\cdot |z|^{m_k-1}\cdot [m_k+O(m_k\ep_k)+O(h^{\prime}_k(z))]\\
\geq&
\ld^{m^{*}}\cdot \bigg(\frac{1}{2}\bigg)^{k}\cdot \bigg[\prod^{k}_{j=1} R^{-n_j}_j\bigg]\cdot
 R_k^{m_k-1}\cdot [m_k+O(m_k\ep_k)+O(h^{\prime}_k(z))]\\
\geq&\ld^{m^{*}}\cdot \bigg(\frac{1}{2}\bigg)^{k}\cdot R_k^{m-1}\cdot\bigg[\prod^{k-1}_{j=1}\frac{R^{n_j}_k}{R^{n_j}_j}\bigg]
\cdot [m_k+O(m_k\ep_k)+O(h^{\prime}_k(z))]\\
\geq&\ld^{m^{*}}\cdot R_k^{m-1}\cdot\bigg[\prod^{k-1}_{j=1}\bigg(\frac{R_k}{2R_j}\bigg)^{n_j}\bigg]
\cdot [m_k+O(m_k\ep_k)+O(h^{\prime}_k(z))]\\
\geq&\ld^{m^{*}}\cdot R_k^{m-1}\cdot\bigg[\prod^{k-1}_{j=1}\bigg(\frac{R_k}{2R_j}\bigg)^{n_j}\bigg]>0,
\end{align*}
where $O(h^{\prime}_k(z))$ is estimated by the classical Cauchy formula in Lemma \ref{cauchyformula8-23-1}.
\end{proof}

\subsection{The order of growth}\label{ordergrowth1}

In this subsection, based on the above construction, we show that, for any $s\in(0,+\infty]$, we can pick up a transcendental function from the family constructed above such that the order of growth is $s$.

\begin{theorem}\label{est-2}
There is a transcendental function defined above with infinite order of growth. In particular,
if $n_k=(\lfloor R_k\rfloor)^k$, then the order of growth is $+\infty$, where $\lfloor x\rfloor$ is a function, giving the largest integer less than or equal to $x$ for real $x$.
\end{theorem}

\begin{proof}
By \eqref{valueest-4}, \eqref{valueest-5}, \eqref{valueest-10}, and \eqref{inequ-1}, one has
\begin{align*}
&\min\{|f(z)|:\ |z|=2R_k\}\\
\geq&
 \ld^{m^{*}}\cdot \bigg(\frac{1}{2}\bigg)^{k}\cdot R_k^{n_k}\cdot\bigg[\prod^{k-1}_{j=1} R^{-n_j}_j\bigg]\cdot\bigg[ \bigg(\frac{1}{2}\bigg)\cdot R_k^{s_{k}}\cdot 2^{m+\sum^{k}_{j=1}n_j}\bigg]\cdot\frac{1}{2}\\
=& \ld^{m^{*}}\cdot \bigg(\frac{1}{2}\bigg)^{k+2}\cdot R_k^{m}\cdot\bigg[\prod^{k-1}_{j=1} \bigg(\frac{R_k}{R_j}\bigg)^{n_j}\bigg]\cdot 2^{m+\sum^{k}_{j=1}n_j}\\
\geq&\ld^{m^{*}}\cdot R_k^{m}\cdot\bigg[\prod^{k-1}_{j=1} \bigg(\frac{R_k}{4R_j}\bigg)^{n_j}\bigg]\cdot 2^{m+\sum^{k}_{j=1}n_j}\\
\geq&2^{n_k}.
\end{align*}
If we take $n_k=\lfloor R_k^s\rfloor$,  then the order of growth satisfies
\begin{align}\label{inequ-10}
\rho(f)&=\limsup_{z\to\infty}\frac{\log\log|f(z)|}{\log|z|}\geq \limsup_{k\to\infty}\frac{\log\log2^{\lfloor R_k^s\rfloor}}{\log(2R_k)}=
\limsup_{k\to\infty}\frac{\log{\lfloor R_k^s\rfloor}+\log\log2}{\log2+\log R_k}\nonumber\\
&\geq\limsup_{k\to\infty}\frac{\log\big({\tfrac{R_k^s}{2}}\big)+\log\log2}{\log2+\log R_k}\geq s.
\end{align}
If we take $n_k=(\lfloor R_k\rfloor)^k$, then the order of growth satisfies
\begin{align*}
\rho(f)=&\limsup_{z\to\infty}\frac{\log\log|f(z)|}{\log|z|}\geq \limsup_{k\to\infty}\frac{\log\log2^{(\lfloor R_k\rfloor)^k}}{\log(2R_k)}\\
=&
\limsup_{k\to\infty}\frac{\log{(\lfloor R_k\rfloor)^k}+\log\log2}{\log2+\log R_k}
\geq\limsup_{k\to\infty}\frac{k\log{\lfloor R_k\rfloor}+\log\log2}{\log2+\log R_k}
\geq\infty.
\end{align*}

\end{proof}

\begin{theorem}\label{est-5}
For any $s\in(0,+\infty)$, there is a transcendental function defined above with the order of growth $s$. In particular,  if $n_k=\lfloor R_k^s\rfloor$, then the order of growth is $s$.
\end{theorem}

\begin{proof}
By \eqref{inequ-10}, it is sufficient to give an upper bound for the order of growth.

By \eqref{valueest-4}, \eqref{valueest-5}, and \eqref{valueest-7}, one has
\begin{align*}
&\max\{|f(z)|:\ |z|=2R_k\}\\
\leq&
 \ld^{m^{*}}\cdot \bigg(\frac{1}{2}\bigg)^{k}\cdot R_k^{n_k}\cdot\bigg[\prod^{k-1}_{j=1} R^{-n_j}_j\bigg]\cdot\bigg[ 4\cdot R_k^{s_k}\cdot2^{m+\sum^{k}_{j=1}n_j}\bigg]\cdot2\\
=& \ld^{m^{*}}\cdot \bigg(\frac{1}{2}\bigg)^{k-3}\cdot R_k^{m}\cdot\bigg[\prod^{k-1}_{j=1} \bigg(\frac{R_k}{R_j}\bigg)^{n_j}\bigg]\cdot 2^{m+\sum^{k}_{j=1}n_j}\\
\leq& \ld^{m^{*}}\cdot R_k^{m}\cdot\bigg[\prod^{k-1}_{j=1} \bigg(\frac{R_k}{R_j}\bigg)^{n_j}\bigg]\cdot 2^{m+\sum^{k}_{j=1}n_j}.
\end{align*}
So,
\begin{align*}
&\frac{\log\log|f(z)|}{\log|z|}\leq\frac{\log\log\bigg( \ld^{m^{*}}\cdot R_k^{m}\cdot\bigg[\prod^{k-1}_{j=1} \big(\frac{R_k}{R_j}\big)^{n_j}\bigg]\cdot 2^{m+\sum^{k}_{j=1}n_j}\bigg)}{\log2R_k}\\
=&\frac{\log\bigg[\log( \ld^{m^{*}})+\log( R_k^{m})+\log\big(\prod^{k-1}_{j=1} \big(\frac{R_k}{R_j}\big)^{n_j}\big)+\log(2^{m+\sum^{k}_{j=1}n_j})\bigg]}{\log2R_k}.
\end{align*}
Using the inequality $\log(x+y)\leq\log x+\log y$ for $x,y\geq2$, this is bounded by
\begin{align*}
\frac{\log\log( \ld^{m^{*}})}{\log2R_k}+
\frac{\log\log( R_k^{m})}{\log2R_k}+
\frac{\log\log\big(\prod^{k-1}_{j=1} \big(\frac{R_k}{R_j}\big)^{n_j}\big)}{\log2R_k}+
\frac{\log\log(2^{m+\sum^{k}_{j=1}n_j})}{\log2R_k}.
\end{align*}
So, simple calculation gives us
\beqq
\lim_{k\to\infty}\frac{\log\log( \ld^{m^{*}})}{\log2R_k}=
\lim_{k\to\infty}\frac{\log m^{*}+\log\log\ld}{\log2+\log R_k}=0,
\eeqq
\beqq
\lim_{k\to\infty}\frac{\log\log( R_k^{m})}{\log2R_k}=\lim_{k\to\infty}\frac{\log m+\log\log R_k}{\log2+\log R_k}=0.
\eeqq
By \eqref{inequ-20}, one has
\begin{align*}
&\lim_{k\to\infty}\frac{\log\log\big(\prod^{k-1}_{j=1} \big(\frac{R_k}{R_j}\big)^{n_j}\big)}{\log2R_k}
=\lim_{k\to\infty}\frac{\log\big(\sum^{k-1}_{j=1} n_j(\log\big(\frac{R_k}{R_j}\big)\big)\big)}{\log2R_k}\\
\leq&
\lim_{k\to\infty}\frac{\log\big(\big(\sum^{k-1}_{j=1} n_j\big)\log\big(\frac{R_k}{R_1}\big)\big)}{\log2R_k}\leq
\lim_{k\to\infty}\frac{\log\big(2\frac{\log R_k }{\log2}\log\big(\frac{R_k}{R_1}\big)\big)}{\log2R_k}\\
\leq&
\lim_{k\to\infty}\frac{\log\big(\frac{2 }{\log2}\big)}{\log2R_k}
+
\lim_{k\to\infty}\frac{\log\log R_k}{\log2R_k}
+
\lim_{k\to\infty}\frac{\log\big(\log\big(\frac{R_k}{R_1}\big)\big)}{\log2R_k}=0,
\end{align*}
and
\begin{align*}
&\lim_{k\to\infty}\frac{\log\log(2^{m+\sum^{k}_{j=1}n_j})}{\log2R_k}=
\lim_{k\to\infty}\frac{\log\big((m+\sum^{k}_{j=1}n_j)\log2\big)}{\log2R_k}\\
=&
\lim_{k\to\infty}\frac{\log\big((m+\sum^{k-1}_{j=1}n_j+n_k)\log2\big)}{\log2R_k}
\leq\lim_{k\to\infty}\frac{\log\big((2\frac{\log R_k }{\log2}+n_k)\log2\big)}{\log2R_k}\\
\leq&\lim_{k\to\infty}\frac{\log\big((2\log R_k \big)}{\log2R_k}+
\lim_{k\to\infty}\frac{\log\big(n_k\log2\big)}{\log2R_k}=\lim_{k\to\infty}\frac{\log\big(n_k\log2\big)}{\log2R_k}.
\end{align*}
Take $n_k=\lfloor R_k^s\rfloor$, one has
\begin{align*}
&\lim_{k\to\infty}\frac{\log\big(n_k\log2\big)}{\log2R_k}=\lim_{k\to\infty}\frac{\log\big(\lfloor R_k^s\rfloor\log2\big)}{\log2R_k}\leq\lim_{k\to\infty}\frac{\log\big((R_k^s+2)\log2\big)}{\log2R_k}\\
\leq&\lim_{k\to\infty}\frac{\log\big(R_k^s\log2\big)}{\log2R_k}+\lim_{k\to\infty}\frac{\log\big(2\log2\big)}{\log2R_k}
\leq s.
\end{align*}
Therefore, the order of growth is $s$.

\end{proof}

\begin{remark}\label{criteria-Ber}
In the following discussions, we will show that the dimension is $1$.

In the proof of Theorem \ref{est-2}, if $n_k=\lfloor R_k\rfloor$, then
\eqref{est-3} holds.
This gives a first example satisfying \eqref{est-3}, such that the Fatou set has a multiply connected component and the packing dimension of the Julia set in the escaping set is $1$.

Further, using similar discussions in the proof of Theorems \ref{est-2} and \ref{est-5}, one has that if $n_k=\lfloor (\log (R_k))^s\rfloor$, then
\beqq
\liminf_{r\to\infty}\frac{\log\log(\max_{|z|=r}|f(z)|)}{\log\log r}=s,
\eeqq
the Fatou sets of these functions also have multiply  connected components, and the packing dimension of the Julia set in the escaping set is $1$.
\end{remark}

\subsection{The inclusion relationship}\label{inclusion8-26-1}

In this subsection, we show \eqref{equ2021-1-26-2}, that is, $A_{k+1}\subset f(A_k)$ and $f(B_k)\subset B_{k+1}$, where $A_k$ and $B_k$ are specified in \eqref{equ2021-1-26-1}.

Let $A=\{z:\ a\leq|z|\leq b\}$, the inner boundary and outer boundary of $A$ are denoted by $\partial_iA=\{z:\ |z|=a\}$ and $\partial_oA=\{z:\ |z|=b\}$, respectively. The boundary of $A$ is $\partial A=\partial_iA\cup \partial_oA$.

\begin{lemma}\cite[Lemma 11.1]{Bishop2018}\label{valueest-12}
 Suppose $g$ is holomorphic on an annulus $W=\{a<|z|<b\}$ and continuous up to the boundary. Let $U=\{c<|z|<d\}$.
\begin{itemize}
 \item[(1)] Assume $|g(z)|\leq c$ on $\partial_iW$ and $|g(z)|\geq d$ on $\partial_oW$. Then $U\subset g(W)$.
\item[(2)] Suppose that $g$ has no zeros in $W$ and $g(\partial W)\subset \overline{U}$. Then $g(W)\subset\overline{U}$.
\end{itemize}
\end{lemma}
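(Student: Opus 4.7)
The plan is to prove part (2) directly from the maximum principle and part (1) from a clopen argument combined with the intermediate value theorem.

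For (2), I would first note that $g$ is nonvanishing on $W$ by hypothesis and also on $\partial W$ (since there $|g|\ge c>0$), so $\log|g|$ is harmonic on a neighborhood of $\overline{W}$. Applying both the maximum and minimum principles on the compact set $\overline{W}$, the quantities $\max_{\overline{W}}|g|$ and $\min_{\overline{W}}|g|$ are attained on $\partial W$, where the hypothesis $g(\partial W)\subset\overline{U}$ gives $c\le|g|\le d$. Hence $c\le|g(z)|\le d$ for every $z\in W$, i.e., $g(W)\subset\overline{U}$.

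For (1), I would first observe that $g$ cannot be constant (a constant value could not simultaneously satisfy $|g|\le c$ on $\partial_iW$ and $|g|\ge d>c$ on $\partial_oW$), so by the open mapping theorem $g(W)$ is open. The key topological step is the standard fact $\partial g(W)\subset g(\partial W)$: any point $p\in\partial g(W)$ lies in the compact set $g(\overline{W})$ but not in the open set $g(W)$, so every preimage of $p$ in $\overline{W}$ must lie on $\partial W$. Combined with $g(\partial_iW)\subset\overline{D(0,c)}$ and $g(\partial_oW)\subset\{|w|\ge d\}$, this gives $\partial g(W)\cap U=\emptyset$. Therefore $g(W)\cap U$ is both open in $U$ (trivially) and closed in $U$ (its relative boundary lies in $\partial g(W)\cap U=\emptyset$). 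Since $U$ is connected, $g(W)\cap U$ is either empty or all of $U$.

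To rule out the empty case, I would choose any path $\gamma:[0,1]\to\overline{W}$ with $\gamma(0)\in\partial_iW$, $\gamma(1)\in\partial_oW$, and $\gamma((0,1))\subset W$, and apply the intermediate value theorem to the continuous function $t\mapsto|g(\gamma(t))|$: since the endpoint values are $\le c$ and $\ge d$, every $r\in(c,d)$ is attained at some interior $t$, so $g(W)$ contains points of every modulus in $(c,d)$, and in particular $g(W)\cap U\ne\emptyset$. Combined with the clopen conclusion, this forces $g(W)\cap U=U$, i.e., $U\subset g(W)$.

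The main technical delicacy I expect is the inclusion $\partial g(W)\subset g(\partial W)$, which tacitly uses that $g$ is non-constant (so that $g(W)$ is open) and that $\overline{W}$ is compact (so that $g(\overline{W})$ is closed and contains the closure of $g(W)$). Everything else, namely the connectedness of $U$ and the IVT along $|g\circ\gamma|$, is routine once this topological fact is in hand.
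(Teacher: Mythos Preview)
Your argument is correct. The paper does not supply its own proof of this lemma; it is quoted verbatim from \cite[Lemma 11.1]{Bishop2018} and used as a black box, so there is nothing to compare against here. One small wording issue: in part (2) you assert that $\log|g|$ is harmonic on a neighborhood of $\overline{W}$, but the hypothesis only gives $g$ holomorphic on the open annulus $W$ and continuous up to $\partial W$. It suffices to say that $\log|g|$ is harmonic on $W$ and continuous on $\overline{W}$; the maximum and minimum principles still apply in that form, and the rest of your argument goes through unchanged.
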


\subsubsection{The estimate $A_{k+1}\subset f(A_k)$}
\par
\begin{definition}
Set
\beq\label{vkregion8-4-5}
V_k:=\bigg\{z:\ \frac{3}{2}R_k\leq|z|\leq\frac{5}{2}R_k\bigg\}\ \text{and}\ U_k:=\bigg\{z:\ \frac{5}{4}R_k\leq|z|\leq3R_{k}\bigg\}.
\eeq
\end{definition}

\begin{lemma}\label{valueest-17}
Suppose (A*) and (A**) hold, one has
$A_{k+1}\subset f(V_k)\subset f(A_k)$. The inner boundary of $V_k$ is mapped into $B_k$, and the outer boundary of $V_k$ is mapped into $B_{k+1}$.
\end{lemma}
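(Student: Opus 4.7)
The plan is to invoke the annulus-covering criterion Lemma~\ref{valueest-12}(1) with $g=f$ and $W=V_k$. Setting $c=\tfrac{1}{4}R_{k+1}$ and $d=4R_{k+1}$ reduces the whole proof to four boundary estimates: $|f(z)|\leq \tfrac{1}{4}R_{k+1}$ and $|f(z)|\geq 4R_k$ on $\partial_i V_k=\{|z|=\tfrac{3}{2}R_k\}$; and $|f(z)|\geq 4R_{k+1}$ and $|f(z)|\leq \tfrac{1}{4}R_{k+2}$ on $\partial_o V_k=\{|z|=\tfrac{5}{2}R_k\}$. The first pair places $f(\partial_i V_k)\subset B_k$, the second pair places $f(\partial_o V_k)\subset B_{k+1}$, and together they give $A_{k+1}\subset f(V_k)$ by Lemma~\ref{valueest-12}(1); the inclusion $f(V_k)\subset f(A_k)$ is immediate from $V_k\subset A_k$.

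To establish these bounds, the idea is to combine the asymptotic formula \eqref{valueest-4} on $V_k\subset A_k$ with the $\tau$-bounds of Lemma~\ref{valueest-6}. On $\partial_i V_k$, $\tau=\tfrac{3}{2}$, and the hypothesis $n_j\geq 8$ forces $\tau^{n_k}\geq(3/2)^{8}>4$, so both \eqref{valueest-7} and \eqref{valueest-10} are available; the same is true on $\partial_o V_k$ with $\tau=\tfrac{5}{2}$. Substituting the explicit $C_k$ from \eqref{valueest-5} collapses the combinatorial factors --- the $R_k^{n_kl_k}$ inside $C_k$ cancels the $R_k^{-(n_kl_k-(m+\sum_{j<k}n_jl_j))}$ in \eqref{valueest-4} --- leaving, modulo a factor $1+O(R_k^{-1})$, an expression for $|f(z)|$ of the form $\ld^{m^{*}}\cdot (1/2)^{\sum_{j\leq k}l_j}\cdot \kappa\cdot R_k^{m+\sum_{j<k}n_jl_j}\prod_{j<k}R_j^{-n_jl_j}\cdot \tau^{m_k}$, where $\kappa$ is a bounded prefactor ($4^{l_k}$ from \eqref{valueest-7} or constants of order $1$ from \eqref{valueest-10}).

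The decisive step is the comparison with the matching bounds \eqref{valueest-8} and \eqref{valueest-11} on $R_{k+1}$, whose structural part is $\ld^{m^{*}}\cdot 2^{m_k}\cdot R_k^{m+\sum_{j<k}n_jl_j}\prod_{j<k}R_j^{-n_jl_j}$ up to factors of $2^{O(\sum l_j)}$. The ratio $|f(z)|/R_{k+1}$ therefore reduces to $(\tau/2)^{m_k}$ times a controlled power of $2$ in $\sum l_j$ and $l_k$. For $\tau=3/2$ this is $(3/4)^{m_k}$, which is small enough under $m\geq 13$ and $n_j\geq 8$ (since $m_k\geq m+8\sum l_j$, the decay of $(3/4)^{m_k}$ beats the collected $2^{O(\sum l_j)}$'s by a quick $\log_2$ check) to give $|f(z)|\leq \tfrac{1}{4}R_{k+1}$; for $\tau=5/2$ the factor $(5/4)^{m_k}$ similarly overwhelms the $2$-powers and yields $|f(z)|\geq 4R_{k+1}$. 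The companion bound $|f(z)|\geq 4R_k$ on $\partial_i V_k$ follows by direct inspection of the product $F_0\cdot \prod F_j$: $|F_0(z)|\gtrsim R_k^m\geq R_k^{13}$ there, each $|F_j(z)|$ with $j<k$ is huge because $|z|=\tfrac{3}{2}R_k\gg R_j$, $|F_k(z)|\geq((1/2)(3/2)^{n_k}-1)^{l_k}\geq 1$, and the tail is $1+O(R_k^{-1})$ by Lemma~\ref{est-7}; the companion $|f(z)|\leq \tfrac{1}{4}R_{k+2}$ on $\partial_o V_k$ is equally immediate because \eqref{inequ-1} makes $R_{k+2}$ astronomically larger than any value of $|f|$ on $|z|=\tfrac{5}{2}R_k$.

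The main obstacle is the careful bookkeeping of the small multiplicative constants --- $\ld^{m^{*}}$, $(1/2)^{\sum l_j}$, $4^{l_k}$ or $(1/2)^{l_k}$, and the error $1+O(R_k^{-1})$ --- against the powers $\tau^{m_k}$ and $2^{m_k}$ coming from the $R_{k+1}$-estimates, and verifying that the thresholds $m\geq 13$ and $n_j\geq 8$ really produce strict inequalities at each of the four bounds; the numerics underlying the choice are $(3/4)^{13}<2^{-5}$ and $(5/4)^{13}>16$, together with the inequality $\sum n_jl_j\geq 8\sum l_j$ provided by $n_j\geq 8$.
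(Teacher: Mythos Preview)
Your proposal is correct and follows essentially the same strategy as the paper: reduce to the four boundary estimates via Lemma~\ref{valueest-12}(1), combine the representation \eqref{valueest-4}--\eqref{valueest-5} with the $\tau$-bounds \eqref{valueest-7}, \eqref{valueest-10}, and compare against the $R_{k+1}$-estimates \eqref{valueest-8}, \eqref{valueest-11}. The paper carries out exactly these four comparisons, and the numerical thresholds you identify ($m\geq 13$, $n_j\geq 8$, together with inequalities like $5^m\geq 2^{2m+4}$) match the paper's.

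Two small points of divergence are worth noting. For $|f(z)|\geq 4R_k$ on $\partial_i V_k$, your direct product inspection is a legitimate shortcut; the paper instead goes through \eqref{valueest-10} with $\tau=\tfrac{3}{2}$ and compares to the upper bound \eqref{valueest-11} for $R_k$, which is more uniform with the other three cases but slightly less transparent. For $|f(z)|\leq\tfrac{1}{4}R_{k+2}$ on $\partial_o V_k$, your appeal to \eqref{inequ-1} alone is too glib: the crude bound $R_{k+2}\geq 4R_{k+1}^2$ does not by itself dominate $|f|$ on $|z|=\tfrac{5}{2}R_k$ without further input, since $|f|$ there is of order $(5/2)^{m_k}$ times the structural piece of $R_{k+1}$, and $(5/4)^{m_k}$ can exceed $R_{k+1}$ in principle. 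The paper closes this by invoking the lower bound \eqref{valueest-8} for $R_{k+2}$, whose dominant factor $R_{k+1}^{m_k}$ (with $R_{k+1}\geq 4R_k^2$) overwhelms the $R_k^{m_{k-1}}\cdot(5/2)^{m_k}$ in the $|f|$-estimate; you should make that comparison explicit rather than rely on ``astronomically larger''.
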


\begin{proof}
The inner boundary of $V_k$ is $\partial_iV_k=\{z:\ |z|=\tfrac{3}{2}R_k\}$ and  the outer boundary of $V_k$ is $\partial_oV_k=\{z:\ |z|=\tfrac{5}{2}R_k\}$.

First, we show that the inner boundary of $V_k$ is mapped into $B_k$, that is, $4R_{k}\leq\min_{z\in\partial_iV_k}|f(z)|\leq\max_{z\in\partial_iV_k}|f(z)|\leq\tfrac{1}{4}R_{k+1}$.

We show $\max_{z\in\partial_iV_k}|f(z)|\leq\tfrac{1}{4}R_{k+1}$.

It follows from $m\geq16>2\cdot\tfrac{\log4}{\log4-\log3}\approx9.63768$ and $n_j\geq8>\tfrac{3}{2}\cdot\tfrac{\log4}{\log4-\log3}\approx7.22826$ that
\beqq
\bigg(m+\sum^k_{j=1}n_j\bigg)\cdot\log3\leq(m-2)\cdot\log4+\bigg(\sum^k_{j=1}(n_j-\tfrac{3}{2})\bigg)\cdot\log4,
\eeqq
implying that
\beq\label{valueest-9}
4 \cdot\bigg(\frac{1}{2}\bigg)^{k-1}\cdot\bigg(\frac{3}{2}\bigg)^{m+\sum^{k}_{j=1}n_j}
\leq\frac{1}{4}\cdot 2^{m-1+\sum^{k}_{j=1}(n_j-2)}.
\eeq
By \eqref{valueest-4}, \eqref{valueest-5}, and \eqref{valueest-7} ($\tau=\tfrac{3}{2}$ in Lemma \ref{valueest-6}),  one has,
\begin{align*}
&|f(z)|\\
\leq&\bigg[\ld^{m^{*}}\cdot \bigg(\frac{1}{2}\bigg)^{k}\cdot R_k^{n_k}\cdot\bigg(\prod^{k-1}_{j=1} R^{-n_j}_j\bigg)\bigg]\cdot \bigg[4\cdot R_k^{s_k}\cdot\bigg(\frac{3}{2}\bigg)^{m+\sum^{k}_{j=1}n_j}\bigg]\cdot 2\\
=&\ld^{m^{*}}\cdot4\cdot \bigg(\frac{1}{2}\bigg)^{k-1}\cdot R_k^{m+\sum^{k-1}_{j=1}n_j}\cdot\bigg[\prod^{k-1}_{j=1} R^{-n_j}_j\bigg]\cdot \bigg(\frac{3}{2}\bigg)^{m+\sum^{k}_{j=1}n_j},\\
\leq&\frac{1}{4}R_{k+1},
\end{align*}
where \eqref{valueest-8} and \eqref{valueest-9} are used in the last inequality.

Now, we prove $\min_{z\in\partial_iV_k}|f(z)|\geq4R_{k}$.

It follows from $n_k>4>\tfrac{\log4}{\log(3/2)}\approx3.41982$ and $m>3>\tfrac{\log8}{\log3}+1\approx2.89279$ that $(\tfrac{3}{2})^{n_k}(\tfrac{1}{2})^2\geq1$ and $3^{m-1}\geq8$.
By \eqref{valueest-4}, \eqref{valueest-5}, and \eqref{valueest-10} ($\tau=\tfrac{3}{2}$ in Lemma \ref{valueest-6}), one has

\begin{align*}
&|f(z)|\\
\geq&\bigg[  \ld^{m^{*}}\cdot \bigg(\frac{1}{2}\bigg)^{k}\cdot R_k^{n_k}\cdot\bigg(\prod^{k-1}_{j=1} R^{-n_j}_j\bigg)\bigg]\cdot\bigg[ \bigg(\frac{1}{2}\bigg)\cdot R_k^{s_k}\cdot\bigg(\frac{3}{2}\bigg)^{m+\sum^{k}_{j=1}n_j}\bigg]\cdot\frac{1}{2}\\
\geq&  \ld^{m^{*}}\cdot \bigg(\frac{1}{2}\bigg)^{k+2}\cdot R_k^{m+\sum^{k-1}_{j=1}n_j}\cdot\bigg[\prod^{k-1}_{j=1} R^{-n_j}_j\bigg] \cdot \bigg(\frac{3}{2}\bigg)^{m+\sum^{k}_{j=1}n_j}\\
\geq&4\cdot\frac{3}{2}\ld^{m^{*}}\cdot(2R_{k-1})^{m+\sum^{k-1}_{j=1}n_j}\cdot
\bigg[\prod^{k-1}_{j=1} R^{-n_j}_j\bigg]\geq 4R_k,
\end{align*}
where \eqref{inequ-1} is used in the last but one inequality, and \eqref{valueest-11} is used in the last inequality.

Second, we prove that the outer boundary of $V_k$ is mapped into $B_{k+1}$, that is, $4R_{k+1}\leq\min_{z\in\partial_oV_k}|f(z)|\leq\max_{z\in\partial_oV_k}|f(z)|\leq\tfrac{1}{4}R_{k+2}$.

Now, we verify $\max_{z\in\partial_oV_k}|f(z)|\leq\tfrac{1}{4}R_{k+2}$.

By \eqref{valueest-4}, \eqref{valueest-5}, and \eqref{valueest-7} ($\tau=\tfrac{5}{2}$ in Lemma \ref{valueest-6}), one has
\begin{align*}
&|f(z)|\\
\leq&\bigg[\ld^{m^{*}}\cdot \bigg(\frac{1}{2}\bigg)^{k}\cdot R_k^{n_k}\cdot\bigg(\prod^{k-1}_{j=1} R^{-n_j}_j\bigg)\bigg]\cdot \bigg[4\cdot R_k^{s_k}\cdot\bigg(\frac{5}{2}\bigg)^{m+\sum^{k}_{j=1}n_j}\bigg]\cdot2\\
=&\ld^{m^{*}}\cdot4\cdot \bigg(\frac{1}{2}\bigg)^{k-1}\cdot R_k^{m+\sum^{k-1}_{j=1}n_j}\cdot\bigg[\prod^{k-1}_{j=1} R^{-n_j}_j\bigg]\cdot \bigg(\frac{5}{2}\bigg)^{m+\sum^{k}_{j=1}n_j},\\
\leq&\frac{1}{4}\cdot\bigg[ \ld^{m^{*}}\cdot 2^{m-1+\sum^{k+1}_{j=1}(n_{j}-2)} \cdot  R_{k+1}^{m+\sum^{k}_{j=1}n_{j}}\cdot\bigg(\prod^{k}_{j=1}{R_j}^{-n_j}\bigg)\bigg]\\
\leq&\frac{1}{4}R_{k+2},
\end{align*}
where \eqref{inequ-1} and \eqref{valueest-8} are used.

Finally, we prove that $4R_{k+1}\leq\min_{z\in\partial_oV_k}|f(z)|$.

It follows from $m>13>\tfrac{2\log4}{\log5-\log4}\approx12.4251$ and $n_j>7>\tfrac{\log4}{\log5-\log4}\approx6.21257$ that $5^m\geq2^{2m+4}$ and $5^{n_j}\geq2^{2n_j+2}$.

By \eqref{valueest-4}, \eqref{valueest-5}, and \eqref{valueest-10} ($\tau=\tfrac{5}{2}$ in Lemma \ref{valueest-6}), one has
\begin{align*}
&|f(z)|\\
\geq&\bigg[  \ld^{m^{*}}\cdot \bigg(\frac{1}{2}\bigg)^{\sum^{k}_{j=1}l_{j}}\cdot R_k^{n_kl_{k}}\cdot\bigg(\prod^{k-1}_{j=1} R^{-n_jl_{j}}_j\bigg)\bigg]\cdot\bigg[ \bigg(\frac{1}{2}\bigg)^{l_{k}}\cdot R_k^{s_k}\cdot\bigg(\frac{5}{2}\bigg)^{m+\sum^{k}_{j=1}n_jl_{j}}\bigg]\cdot\frac{1}{2}\\
\geq&  \ld^{m^{*}}\cdot \bigg(\frac{1}{2}\bigg)^{l_k+\sum^{k}_{j=1}l_{j}+1}\cdot R_k^{m+\sum^{k-1}_{j=1}n_jl_{j}}\cdot\bigg[\prod^{k-1}_{j=1} R^{-n_jl_{j}}_j\bigg] \cdot \bigg(\frac{5}{2}\bigg)^{m+\sum^{k}_{j=1}n_jl_{j}}\\
\geq&4\cdot\bigg[\frac{3}{2}\ld^{m^{*}}\bigg]\cdot(2R_{k})^{m+\sum^{k}_{j=1}n_jl_{j}}\cdot\bigg[\prod^{k}_{j=1} R^{-n_jl_{j}}_j\bigg]\geq 4R_{k+1},
\end{align*}
where \eqref{inequ-1} is used in the last inequality.

Hence, $A_{k+1}\subset f(V_k)$ by the first part of Lemma \ref{valueest-12}.

\end{proof}

\subsubsection{$f(B_k)\subset B_{k+1}$}

\begin{lemma}\label{boundaryfatou}
Suppose (A*) and (A**) hold, one has
\begin{itemize}
\item $f(\partial_oA_k)\subset B_{k+1}$, where $\partial_oA_k=\{z:\ |z|=4R_k\}$ is the  outer boundary of $A_k$;
\item $f(\partial_iA_k)\subset B_k$, where $\partial_iA_k=\{z:\ |z|=R_k/4\}$ is the inner boundary of $A_k$.
\end{itemize}
\end{lemma}

\begin{proof}
We will show $f(\partial_oA_k)\subset B_{k+1}$, that is, $$4R_{k+1}\leq\min_{z\in\partial_oA_k}|f(z)|\leq\max_{z\in\partial_oA_k}|f(z)|\leq\tfrac{1}{4}R_{k+2}.$$

First, we prove $\min_{z\in\partial_oA_k}|f(z)|\geq4R_{k+1}$.

By \eqref{valueest-4}, \eqref{valueest-5}, and \eqref{valueest-10} ($\tau=4$ in Lemma \ref{valueest-6}), one has, for $m\geq16$ and $n_j\geq8$,
\begin{align*}
&|f(z)|\\
=&\bigg|C_k \cdot(4R_k)^{s_k}\cdot\bigg(H_{n_k}\bigg(\frac{z}{R_k}\bigg)\bigg)\cdot(1+O(R^{-1}_k))\bigg|\\
\geq&\bigg[\ld^{m^{*}}\cdot \bigg(\frac{1}{2}\bigg)^{k}\cdot R_k^{n_k}\cdot\bigg(\prod^{k-1}_{j=1} R^{-n_j}_j\bigg)\bigg]\cdot\bigg[ \bigg(\frac{1}{2}\bigg)\cdot R_k^{s_k}\cdot4^{m+\sum^{k}_{j=1}n_j}\bigg]\cdot\frac{1}{2}\\
=&  \ld^{m^{*}}\cdot \bigg(\frac{1}{2}\bigg)^{k+2}\cdot R_k^{m+\sum^{k-1}_{j=1}n_j}\cdot\bigg[\prod^{k-1}_{j=1} R^{-n_j}_j\bigg] \cdot 4^{m+\sum^{k}_{j=1}n_j}\\
\geq&4\cdot\frac{3}{2}\cdot\ld^{m^{*}}\cdot(2R_{k})^{m+\sum^{k}_{j=1}n_j}\cdot\bigg[\prod^{k}_{j=1} R^{-n_j}_j\bigg]\geq 4R_{k+1},
\end{align*}
where \eqref{valueest-11} is used in the last but one inequality.

Second, we show $\max_{z\in\partial_oA_k}|f(z)|\leq\tfrac{1}{4}R_{k+2}$.

By \eqref{valueest-4}, \eqref{valueest-5}, and \eqref{valueest-7} ($\tau=4$ in Lemma \ref{valueest-6}), for $m\geq16$ and $n_j\geq8$, one has
\begin{align*}
&|f(z)|\\
\leq&\bigg[\ld^{m^{*}}\cdot \bigg(\frac{1}{2}\bigg)^{k}\cdot R_k^{n_k}\cdot\bigg(\prod^{k-1}_{j=1} R^{-n_j}_j\bigg)\bigg]\cdot \bigg[4\cdot R_k^{s_k}\cdot4^{m+\sum^{k}_{j=1}n_j}\bigg]\cdot2\\
=&\ld^{m^{*}}\cdot4\cdot \bigg(\frac{1}{2}\bigg)^{k-1}\cdot R_k^{m+\sum^{k-1}_{j=1}n_j}\cdot\bigg[\prod^{k-1}_{j=1} R^{-n_j}_j\bigg]\cdot 4^{m+\sum^{k}_{j=1}n_j},\\
\leq&\frac{1}{4}\cdot \ld^{m^{*}}\cdot 2^{m-1+\sum^{k+1}_{j=1}(n_{j}-2)} \cdot  R_{k+1}^{m+\sum^{k}_{j=1}n_{j}}\cdot\bigg[\prod^{k}_{j=1}{R_j}^{-n_j}\bigg]\\
\leq&\frac{1}{4}R_{k+2},
\end{align*}
where \eqref{inequ-1} and \eqref{valueest-8} are used.

Hence, one has $f(\partial_oA_k)\subset B_{k+1}$.

We will show $f(\partial_iA_k)\subset B_{k}$, that is, $$4R_{k}\leq\min_{z\in\partial_iA_k}|f(z)|\leq\max_{z\in\partial_iA_k}|f(z)|\leq\tfrac{1}{4}R_{k+1}.$$

First, we prove $\min_{z\in\partial_iA_k}|f(z)|\geq4R_{k}$.

By $m>10$ and $R>4$, one has
\beqq
R_{k-1}\geq4=2^2\ \text{and}\ R^{(m-2)+\sum^{k-1}_{j=1}n_j}_{k-1}\geq 2^{m+\sum^{k-1}_{j=1}(n_j+1)+6},
\eeqq
 or
\beqq
R^{(m-1)+\sum^{k-1}_{j=1}n_j}_{k-1}
\geq 2^{m+\sum^{k-1}_{j=1}(n_j+1)+5}\cdot (2R_{k-1}).
\eeqq
So, by \eqref{inequ-1},
\beqq
(R_k/(4R_{k-1}))^{(m-1)+\sum^{k-1}_{j=1}n_j}\geq R^{(m-1)+\sum^{k-1}_{j=1}n_j}_{k-1}
\geq 2^{m+\sum^{k-1}_{j=1}(n_j+1)+5}\cdot (2R_{k-1}),
\eeqq
yielding that
\beq\label{valueest-16}
 \bigg(\frac{1}{2}\bigg)^{2m+(\sum^{k-1}_{j=1}(2n_j+1))+2}\cdot R_k^{m+\sum^{k-1}_{j=1}n_j}\geq2^3\cdot(2R_{k-1})^{m+\sum^{k-1}_{j=1}n_j}.
\eeq
By \eqref{valueest-4}, \eqref{valueest-5}, and \eqref{valueest-13} ($\tau=\tfrac{1}{4}$ in Lemma \ref{valueest-6-8-26-2}), one has
\begin{align*}
&|f(z)|\\
\geq&\bigg[  \ld^{m^{*}}\cdot \bigg(\frac{1}{2}\bigg)^{k}\cdot R_k^{n_k}\cdot\bigg(\prod^{k-1}_{j=1} R^{-n_j}_j\bigg)\bigg]\cdot\bigg[ R_k^{s_k}\cdot\bigg(\frac{1}{4}\bigg)^{m+\sum^{k-1}_{j=1}n_j}\bigg]\cdot\frac{1}{2}\\
=&  \ld^{m^{*}}\cdot \bigg(\frac{1}{2}\bigg)^{2m+(\sum^{k-1}_{j=1}(2n_j+1))+2}\cdot R_k^{m+\sum^{k-1}_{j=1}n_j}\cdot\bigg[\prod^{k-1}_{j=1} R^{-n_j}_j\bigg] \\
\geq&4\cdot\frac{3}{2}\cdot\ld^{m^{*}}\cdot(2R_{k-1})^{m+\sum^{k-1}_{j=1}n_j}\cdot
\bigg[\prod^{k-1}_{j=1} R^{-n_j}_j\bigg]\geq 4R_{k},
\end{align*}
where \eqref{valueest-11} is used in the last inequality, and the last but one inequality is derived by \eqref{valueest-16}.

Now, we verify that $\max_{z\in\partial_iA_k}|f(z)|\leq\tfrac{1}{4}R_{k+1}.$

By definition of $R_{k+1}$, one has
\beqq
\max\{|f(z)|:\ |z|=R_k/4\}\leq\max\{|f(z)|:\ |z|=2R_k\}=R_{k+1}.
\eeqq
It follows from \eqref{valueest-4} and \eqref{valueest-5} that, where $R> 4$ is used for the factor $4$ in the second inequality, and \eqref{valueest-10} and
\eqref{valueest-15} (Lemmas \ref{valueest-6} and \ref{valueest-6-8-26-2}) are used in the following discussions,
\begin{align*}
&\frac{\max\{|f(z)|:\ |z|=R_k/4\}}{\max\{|f(z)|:\ |z|=2R_k\}}\\
\leq&
4\frac{\max\{|z^{s_k}\cdot(H_{n_k}(\frac{z}{R_k}))|:\ |z|=R_k/4\}}{\min\{|z^{s_k}\cdot(H_{n_k}(\frac{z}{R_k}))|:\ |z|=2R_k\}}\\
\leq&
4\frac{3\cdot R_k^{s_k}\cdot(\frac{1}{4})^{m+\sum^{k-1}_{j=1}n_j}}
{(\frac{1}{2})\cdot R_k^{s_k}\cdot2^{m+\sum^{k}_{j=1}n_j}}\\
\leq&\frac{1}{2^{2m-2+\sum^{k-1}_{j=1}3n_j+(n_k-3)}}\\
\leq &\frac{1}{4}.
\end{align*}
This gives us the required estimates.
\end{proof}

\begin{lemma}
Suppose (A*) and (A**) hold, one has $f(B_j)\subset B_{j+1}$,  $j\geq1$. As a consequence, $B_j$ is contained in the Fatou set of $f$, $j\geq1$.
\end{lemma}

\begin{proof}
By Lemma \ref{boundaryfatou}, the inner and outer boundary of $B_j$ is mapped into $B_{j+1}$. This, together with the fact that there is no zero $f$ in $B_j$ and the second part of Lemma \ref{valueest-12}, implies that $f(B_j)\subset B_{j+1}$.

Since $f(B_j)\subset B_{j+1}$ and any point $z\in B_j$, $\lim_{n\to\infty}f^n(z)\to\infty$ as $n\to+\infty$. Hence, the iterates of $f$ form a normal family on $B_j$. Therefore, $B_j$ is contained in the Fatou set of $f$.
\end{proof}

\subsection{The Julia set in $A_k$}

Recall that $\Om^p_m$ represents the petals of $\Om_m$, the $m$ components of $|H_m(z)|<1$ other than the central component $\Om^0_m$ that contains the origin.

\begin{lemma}\label{inequ-15}
Suppose (A*) and (A**) hold.  Then
 $\mathcal{J}(f)\cap A_j\subset V_j\cup(R_j\cdot\Om^{p}_{n_j})$, $j\geq1$.
\end{lemma}

\begin{proof}

The complement of $V_k\cup(R_k\cdot\Om^{p}_{n_k})$ in $A_k$ is divided into four pieces and we will verify that each of them is in the Fatou set.

First, consider the annulus $\{z:\ \tfrac{5R_k}{2}\leq|z|\leq4R_k\}$, where the boundary of this region consists of the outer boundary of $A_k$, $\{z:\ |z|=4R_k\}$ and the outer boundary of $V_k$, $\{z:\ |z|=\tfrac{5R_k}{2}\}$. By Lemmas \ref{valueest-17} and
\ref{boundaryfatou}, these two boundaries are mapped into $B_{k+1}$. By \eqref{valueest-18}, $f$ has no zeros in this annulus. So, the annulus is mapped into $B_{k+1}$, and this region is contained in the Fatou set by the second part of Lemma \ref{valueest-12}.

Second,  consider the region between the inner boundary of $A_k$, $\{z:\ |z|=\tfrac{R_k}{4}\}$, and the boundary of $R_k\cdot \Om^0_{n_k}$.

By Lemma \ref{inequ-17} and $n_k\geq8$, $\tfrac{1}{4}<1-\tfrac{1}{n_k}$ and the inner boundary of $A_k$ is contained in the interior of $R_k\cdot \Om^0_{n_k}$. The inner boundary of $A_k$ is mapped into $B_k$ by Lemma \ref{boundaryfatou}, and the inner boundary of $V_k$ is mapped into $B_k$ by Lemma \ref{valueest-17}, there is no zero of $f$ in this region. This, together with the minimum and maximum principles, yields that this region is mapped into $B_k$.

Third, consider the following region
\beq\label{inequ-16}
T^{\de}_k=\bigg\{z:\ 1-\frac{1}{n_k}\leq\frac{|z|}{R_k}\leq1+\frac{2}{n_k},\ |H_{n_k}(z/R_k)|>\de\bigg\},
\eeq
where $\de=\tfrac{4}{R^{m-3}_k}$ is a positive constant which is derived in the following discussions.
By \eqref{inequ-5} of Corollary \ref{inequ-6}, the petal region $R_k\cdot \Om^{p}_{n_k}$ is contained in this region
\beqq
\bigg\{z:\ 1-\frac{1}{n_k}\leq\frac{|z|}{R_k}\leq1+\frac{2}{n_k}\bigg\}.
\eeqq
So, $T^{\de}_k$ contains ``a large part" of the petal regions.

Next, the task is to show $f(T^{\de}_k)\subset B_k$, which can be derived by two inequalities:
\beqq
\max\{|f(z)|:\ z\in T^{\de}_k\}\leq\frac{1}{4}R_{k+1}\ \mbox{and}\ \min\{|f(z)|:\ z\in T^{\de}_k\}\geq 4R_k.
\eeqq

Note that $1+\tfrac{2}{n_k}\leq 1+\tfrac{2}{8}<\tfrac{3}{2}$ by $n_k\geq8$. This, together with the fact the inner boundary of $V_k$ is mapped into $B_k$ by Lemma \ref{valueest-17}, implies that the first inequality.

Now, we prove  $\min\{|f(z)|:\ z\in T^{\de}_k\}\geq 4R_k$.

Introduce a variable $a$ with $a\in[-1,2]$.  By \eqref{valueest-4} and \eqref{valueest-5}, one has
\begin{align}\label{inequ-9}
&\min\{|f(z)|:\ z\in T^{\de}_k\}\nonumber\\
\geq&
\frac{1}{2}
\ld^{m^{*}}\cdot \bigg(\frac{1}{2}\bigg)^{k}\cdot R_k^{n_k}\cdot\bigg[\prod^{k-1}_{j=1} R^{-n_j}_j\bigg]\cdot \bigg(\bigg(1+\frac{a}{n_k}\bigg)R_k\bigg)^{s_k}\cdot\de\nonumber\\
=&\ld^{m^{*}}\cdot \bigg(\frac{1}{2}\bigg)^{k+1}\cdot R_k^{(m+\sum^{k-1}_{j=1}n_j)}\cdot\bigg[\prod^{k-1}_{j=1} R^{-n_j}_j\bigg]\cdot \bigg(1+\frac{a}{n_k}\bigg)^{s_{k}}\cdot\de\nonumber\\
=&\ld^{m^{*}}\cdot \bigg(\frac{1}{2}\bigg)^{k+1}\cdot R_k^{(m+\sum^{k-1}_{j=1}n_j)}\cdot\bigg[\prod^{k-1}_{j=1} R^{-n_j}_j\bigg]\cdot \bigg(1+\frac{a}{n_k}\bigg)^{m_{k-1}-n_k}\cdot\de.
\end{align}

Now, we estimate $\big(1+\frac{a}{n_k}\big)^{m_{k-1}-n_k}$ for $a\in[-1,2]$.
This is split into two parts: $\big(1+\frac{a}{n_k}\big)^{m_{k-1}}$ and $\big(1+\frac{a}{n_k}\big)^{-n_k}$.

For $a\in[-1,2]$, by $n_k\geq8$,
\beqq
\bigg(1+\frac{a}{n_k}\bigg)^{m_{k-1}}\geq\bigg(\frac{1}{2}\bigg)^{m_{k-1}}.
\eeqq
For $a\in[-1,2]$, one has $1+\tfrac{a}{n_k}\leq1+\tfrac{|a|}{n_k}$. So, it suffices to consider the case $a\in[0,2]$. Take a sufficiently small positive constant $\eta$ ($\eta<\tfrac{1}{2}$), and the interval $[0,2]$ is split into two parts: $[0,\eta]$ and $[\eta,2]$.

Now, we consider the case $a\in[\eta,2]$.

The classical Taylor expansion gives the following identity:
\beq\label{inequ-7}
x\bigg(\bigg(1+\frac{1}{x}\bigg)^x-e\bigg)=-\frac{e}{2}+\frac{11e}{24}\frac{1}{x}+O\bigg(\frac{1}{x^2}\bigg)\ \mbox{for real}\ x.
\eeq
By the assumption $n_k>4$, for $a\in[\eta,2]$, $\tfrac{n_k}{a}\geq\tfrac{n_k}{2}\geq2$. So, by \eqref{inequ-7}, it suffices to use the inequality $\tfrac{e}{2}\leq(1+\tfrac{1}{x})^x\leq2e$ for $x\geq2$. So, one has
\begin{align*}
 \bigg(1+\frac{a}{n_k}\bigg)^{n_k}=\bigg(\bigg(1+\frac{a}{n_k}\bigg)^{\frac{n_k}{a}}\bigg)^{a}
\leq(2e)^{a}<6^2=36.
\end{align*}
Now, we study the situation $a\in[0,\eta]$.
\beqq
\bigg(1+\frac{a}{n_k}\bigg)^{n_k}\leq 1+2\frac{a}{n_k}n_k\leq1+2\eta<2.
\eeqq
Hence, by \eqref{inequ-9}, one has
\begin{align*}
&\min\{|f(z)|:\ z\in T^{\de}_k\}\\
\geq&
 \ld^{m^{*}}\cdot \bigg(\frac{1}{2}\bigg)^{k+1}\cdot R_k^{(m+\sum^{k-1}_{j=1}n_j)}\cdot\bigg[\prod^{k-1}_{j=1} R^{-n_j}_j\bigg]\cdot \bigg(\frac{1}{(1+\tfrac{a}{n_k})}\bigg)^{n_k}\cdot
 \bigg(\frac{1}{2}\bigg)^{m+\sum^{k-1}_{j=1}n_j}\cdot\de\\
\geq&  \ld^{m^{*}}\bigg(\frac{1}{2}\bigg)^{m+1}\cdot R^{m}_k\cdot \bigg[\prod^{k-1}_{j=1} \bigg(\frac{R_k}{4R_j}\bigg)^{n_j}\bigg]\cdot \frac{1}{36}\cdot \de\\
\geq& R^{m-2}_k\cdot \de\geq 4R_k,
\end{align*}
where the last inequality can be derived by  $\ld^{m^{*}}=(2\ld)^{m-1}$, \eqref{inequ-8}, $m\geq2^4$, $R\geq2^5$, and
\beq\label{est-7-31-2}
\de\geq\frac{4}{R^{m-3}_k}.
\eeq
This constant $\de$ can be arbitrarily small as we want as long as $R$ is sufficiently large.

Fourth, consider the following regions, which are not the union of the above three regions and outside the region $V_k$:
\beqq
\bigg\{z:\ \frac{1}{4}\leq\frac{|z|}{R_k}\leq1-\frac{1}{n_k},\ z\not\in R_k\cdot \Om^0_{n_k}\bigg\}
\eeqq
and
\beqq
\bigg\{z:\ 1+\frac{2}{n_k}\leq\frac{|z|}{R_k}\leq\frac{3}{2}\bigg\}.
\eeqq
It is sufficient to show that for $z\in \big\{z:\ \tfrac{|z|}{R_k}=1-\tfrac{1}{n_k}\big\}\cup\big\{z:\ \tfrac{|z|}{R_k}=1+\tfrac{2}{n_k}\big\}$, $|H_{n_k}(\tfrac{z}{R_k})|>\de$, where $\de$ is specified in \eqref{est-7-31-2}.

Now, show the function $g_1(x)=(1+\tfrac{1}{x})^x$ with $x\geq2$ is nondecreasing. Direct computation gives the derivative of $g_1(x)$ is $ (1+\tfrac{1}{x})^x\cdot(\log(1+\tfrac{1}{x})-\tfrac{1}{x+1})$. For the function $g_2(x)=\log(1+\tfrac{1}{x})-\tfrac{1}{x+1}$, $g_2(1)=\log(2)-0.5\approx0.693147-0.5=0.193147>0$, $\lim_{x\to+\infty}\log(1+\tfrac{1}{x})=\lim_{x\to+\infty}\tfrac{1}{x+1}=0$, the derivative of $g_2(x)$ is $-\tfrac{1}{x^2+x}+\tfrac{1}{(x+1)^2}<0$. So, the derivative of $g_1(x)$ is nonnegative. Hence, $g_1(x)=(1+\tfrac{1}{x})^x$ with $x\geq2$ is nondecreasing. Similarly, the function $g_3(x)=(1-\tfrac{1}{x})^x$ with $x\geq2$ is nondecreasing, since $g_3(x)=(1-\tfrac{1}{x})^{(-x)\cdot(-1)}=\tfrac{1}{g_1(-x)}$.

Direct computation gives, for $z\in \big\{z:\ \tfrac{|z|}{R_k}=1-\tfrac{1}{n_k}\big\}$, one has
\beqq
|H_{n_k}(\tfrac{z}{R_k})|\geq\big(1-\tfrac{1}{n_k}\big)^{n_k}(2-\big(1-\tfrac{1}{n_k}\big)^{n_k})
\geq(1-\tfrac{1}{8})^8\cdot(2-\tfrac{1}{e})\approx0.560811;
\eeqq
for $z\in \big\{z:\ \tfrac{|z|}{R_k}=1+\tfrac{2}{n_k}\big\}$, one has
\beqq
|H_{n_k}(\tfrac{z}{R_k})|\geq\big(1+\tfrac{2}{n_k}\big)^{n_k}(\big(1+\tfrac{2}{n_k}\big)^{n_k}-2)
\geq(1+\tfrac{2}{8})^8\cdot(\big(1+\tfrac{2}{8}\big)^{8}-2)\approx23.6062.
\eeqq
So, for sufficiently large $R$, one can take $\frac{4}{R^{m-3}_1}<0.5$.

The inner boundary of $A_k$ is mapped into $B_k$ by Lemma \ref{boundaryfatou}, the inner boundary of $V_k$ is mapped into $B_k$ by Lemma \ref{valueest-17}, and there is no zero of $f$ in this region. This, together with the minimum and maximum principles, implies that these two regions are mapped into $B_k$.

\end{proof}

\begin{remark}\label{dis8-2-2}
By Definition \ref{petal731-1}, $H_{n_k}$ is a conformal map of each petal in $\Om^p_{n_k}$ to the unit disk, implying that each part of the petal where $|H_{n_k}|\leq\de$ has diameter similar to the multiplication of $\de$ and the diameter of the petal.
Recall the definition of $A_k$ in \eqref{equ2021-1-26-1}, by Corollary \ref{inequ-6}, the diameter of the components of $R_k\cdot\Om^p_{n_k}$ is about $O(\tfrac{R_k}{n_k})$. This, together with \eqref{est-7-31-2}, implies that the part of the Julia set contained in each petal has diameter at most
\beqq
\frac{R_k}{n_k}\cdot \frac{4}{R^{m-3}_k}= \frac{4}{n_k\cdot R^{m-4}_k}.
\eeqq

This estimate, together with the generalized Koebe distortion estimate (Lemma \ref{distorsion-7-31-3}), will be applied in the dimension estimation.
\end{remark}

\subsection{Critical points in the Fatou set}

In this subsection, the critical points of $f$ are verified to be in the Fatou set.

An entire function is hyperbolic, if the set of singular values, including critical values and finite asymptotic values, is bounded and all such points iterate to attracting cycles
\cite{RempeSixsmith2017}. Although the functions considered here have an unbounded set of critical values, implying that these are not hyperbolic, all the critical points of these functions are in the Fatou set.

\begin{lemma}\label{criticalfatou-8-17-3}
Suppose (A*) and (A**) hold.
For any critical point of $f$ in $A_k$, the image of this point is in $B_k$, implying that the critical point is in the Fatou set.
\end{lemma}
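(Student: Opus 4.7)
The plan is to exploit the asymptotic representation of $f$ on $A_k$ from Lemma \ref{valueest-4}. Writing
\beqq
g(z):=C_kz^{-\al}\bigl(H_{n_k}(z/R_k)\bigr)^{l_k},\qquad \al:=n_kl_k-c,\quad c:=m+\sum^{k-1}_{j=1}n_jl_j,
\eeqq
one has $f=g\cdot(1+O(R_k^{-1}))$ on $A_k$. Setting $u=z/R_k$ and using the identities $H_{n_k}(u)=u^{n_k}(2-u^{n_k})$ and $uH'_{n_k}(u)=2n_ku^{n_k}(1-u^{n_k})$, the equation $g'/g=0$ reduces to the closed form
\beqq
u^{n_k}=\tht,\qquad \tht:=\frac{2c}{l_kn_k+c}\in(0,2),
\eeqq
giving $n_k$ critical points of $g$ equally spaced on the circle $|u|=\tht^{1/n_k}$; moreover $|H_{n_k}(u^*)|=\tht(2-\tht)\le1$ by AM-GM. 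A Rouche/Hurwitz-type perturbation argument using the $O(R_k^{-1})$ smallness of the correction in Lemma \ref{valueest-4}, together with the slack furnished by $R\ge32$, then upgrades these approximate critical points to genuine critical points of $f$ in $A_k$ whose $f$-values agree with those of $g$ up to the same $O(R_k^{-1})$ relative error.

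Substituting $u^{*n_k}=\tht$ back into the formula and using \eqref{valueest-5} for $C_k$ produces the clean identity
\beqq
|g(z^*)|=\ld^{m^*}\cdot 2^{-L}\cdot R_k^{c}\cdot\Bigl(\tss\prod^{k-1}_{j=1}R_j^{-n_jl_j}\Bigr)\cdot\tht^{c/n_k}(2-\tht)^{l_k},\qquad L:=\sum^{k}_{j=1}l_j,
\eeqq
in which $\tht^{c/n_k}(2-\tht)^{l_k}$ is precisely the maximum of $x\mapsto x^{c/n_k}(2-x)^{l_k}$ over $[0,2]$ and an elementary entropy bound gives $1\le\tht^{c/n_k}(2-\tht)^{l_k}\le 2^{c/n_k+l_k}$. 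Comparing with the lower bound \eqref{valueest-8} and the upper bound \eqref{valueest-11} on $R_{k+1}$, I would verify
\beqq
4R_k\le|f(z^*)|\le R_{k+1}/4,
\eeqq
so that $f(z^*)\in B_k$. The upper bound reduces to the exponent inequality $c/n_k+l_k\le m-3+\sum(n_j-1)l_j$, which holds for $m\ge4$ and $n_j\ge4$ using $c/n_k\le c/4$ together with $\sum^{k-1}_{j=1}(n_j-1)l_j\ge(3/4)(c-m)$ and $(n_k-1)l_k\ge3l_k$. The lower bound reduces to $R_{k+1}\ge 2^8R_k$, which follows from \eqref{inequ-1} ($R_{k+1}\ge 4R_k^2$) once $R\ge32$ forces $R_k\ge64$. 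Since $B_k$ is contained in the Fatou set by the previous section, $z^*\in\mathcal{F}(f)$.

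The remaining critical points of $f$ in $A_k$ arise from higher-multiplicity zeros of $F_k$, which exist only if $l_k\ge2$; these sit on the circle $|z|=r_k$ of \eqref{valueest-18} inside the petals $R_k\cdot\Om^{p}_{n_k}$ and satisfy $f(z^*)=0$ rather than landing in $B_k$. For these I would instead verify directly that $0\in\mathcal{F}(f)$: $F_j(0)=1$ for $j\ge1$ gives $f(0)=F_0(0)=p_\ld^N(0)$, which by the hyperbolicity construction of $p_\ld$ in Section \ref{hypdy-1} lies in the Fatou set of $p_\ld$, and the uniform convergence of $\prod_{j\ge1}F_j$ near $0$ from Lemma \ref{convergrad} transfers normality of $\{f^n\}$ to a neighborhood of $0$. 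The main obstacle will be the algebraic bookkeeping in the $|f(z^*)|$ estimate: rigorously showing that the sub-exponential-in-$\log R_k$ factors $\tht^{c/n_k}(2-\tht)^{l_k}$ are dominated by the $R_k^{c}$ growth with enough slack for both the upper and lower inequalities to hold uniformly across all admissible $(c,l_k,n_k,k)$; the hypothesis $R\ge32$ is tuned to provide exactly this margin.
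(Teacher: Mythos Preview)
Your route is genuinely different from the paper's and the core computation---solving $g'/g=0$ to get $u^{n_k}=\tht=2c/(c+n_kl_k)$, then evaluating $|g|$ there and using the entropy bound $1\le\tht^{c/n_k}(2-\tht)^{l_k}\le2^{c/n_k+l_k}$---is correct and elegant. The paper, by contrast, never locates the critical points. It works directly with the critical-point equation for $f=C_kz^{s_k}(H_{n_k}(z/R_k))^{l_k}(1+h)$: at any critical point in $A_k$ (other than a zero of $H_{n_k}$) one solves for $H'_{n_k}(z/R_k)$ and shows it is $O(s_k2^{2n_k}/l_k)$. The identity
\[
1-H_{n_k}(w)=\Bigl(\frac{H'_{n_k}(w)}{2n_kw^{\,n_k-1}}\Bigr)^{2}
\]
then yields $|1-H_{n_k}(z/R_k)|\le1/4$, hence $|H_{n_k}(z/R_k)|\ge3/4$, placing the critical point in the region $T^{\de}_k$ of Lemma~\ref{inequ-15} that was already shown to map into $B_k$. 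This sidesteps any Rouch\'e/Hurwitz perturbation entirely.

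Your argument has one concrete error. The lower bound $|g(z^*)|\ge4R_k$ does \emph{not} reduce to $R_{k+1}\ge2^8R_k$. From $\tht^{c/n_k}(2-\tht)^{l_k}\ge1$ you get $|g(z^*)|\ge\ld^{m^*}2^{-L}R_k^{c}\prod_{j<k}R_j^{-n_jl_j}$; comparing this with \eqref{valueest-8} gives only $|g(z^*)|\ge R_{k+1}/2^{\,m-1+\sum_{j\le k}(n_j-1)l_j}$, and the exponent is unbounded in $k$, so no fixed power of $2$ suffices. The correct comparison---following the pattern of Lemma~\ref{valueest-17}---is against the \emph{upper} bound \eqref{valueest-11} for $R_k$ at index $k-1$, which yields a condition of the shape $(R_k/(2R_{k-1}))^{c}\gtrsim2^{L}$; this holds under the standing hypotheses but is a different and more delicate inequality than the one you wrote down. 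The Rouch\'e step is also under-specified: you need Cauchy's estimate on a slightly shrunken annulus to get $h'=O(R_k^{-2})$, and then you must check that the zeros of $g'/g$ are simple and well-separated from the rest of its range so that the perturbation $-h'/(1+h)$ moves them only $O(R_k^{-1})$; this is routine but does not follow from $|h|=O(R_k^{-1})$ alone.
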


\begin{proof}
By Lemma \ref{criticalvk-8-6-3}, there is no critical point in $V_k$. This, together with Lemma \ref{inequ-15}, yields that one needs to show that there is no critical point in the petals.

It follows from \eqref{valueest-4} and \eqref{valueest-5} that the function $f$ can be rewritten as
\beqq
f(z)=C_k\cdot z^{s_k}\cdot\bigg(H_{n_k}\bigg(\frac{z}{R_k}\bigg)\bigg)\cdot(1+h(z)))
\eeqq
where $s_k$ is introduced in \eqref{exp8-20-1}, $h(z)$ is holomorphic on $A_k$, and $|h(z)|=O(R^{-1}_k)$ on $A_k$. This, together with the Cauchy's estimate, implies that $|h^{\prime}(z)|=O(R^{-2}_k)$ for $z\in A_k$.

Taking derivative of $f(z)$, one has
\begin{align}\label{deriva-1}
f^{\prime}(z)&=C_k\cdot s_k\cdot z^{s_k-1}\cdot\bigg(H_{n_k}\bigg(\frac{z}{R_k}\bigg)\bigg)\cdot(1+h(z))\nonumber\\
&+C_k\cdot z^{s_k}\cdot H^{\prime}_{n_k}\bigg(\frac{z}{R_k}\bigg)\cdot\frac{1}{R_k}\cdot(1+h(z))\nonumber\\
&+C_k\cdot z^{s_k}\cdot\bigg(H_{n_k}\bigg(\frac{z}{R_k}\bigg)\bigg)\cdot h^{\prime}(z).
\end{align}
The critical points are solutions to the equations
\beq\label{deriva-2}
z^{s_k-1}=0\
\eeq
and
\begin{align}\label{deriva-3}
 s_k\cdot H_{n_k}\bigg(\frac{z}{R_k}\bigg)\cdot(1+h(z))+ z\cdot H^{\prime}_{n_k}\bigg(\frac{z}{R_k}\bigg)\cdot\frac{1}{R_k}\cdot(1+h(z))+ z\cdot H_{n_k}\bigg(\frac{z}{R_k}\bigg)\cdot h^{\prime}(z)=0.
\end{align}
Since we consider the critical points in $A_k$, $z^{s_k-1}=0$ is impossible.

Now, we consider the second equation.
\begin{align*}
 H^{\prime}_{n_k}\bigg(\frac{z}{R_k}\bigg)&=
R_k\cdot H_{n_k}\bigg(\frac{z}{R_k}\bigg)\cdot\bigg[\frac{-s_k\cdot(1+h(z))- z\cdot h^{\prime}(z)}{z\cdot(1+h(z))}\bigg]\\
&=R_k\cdot H_{n_k}\bigg(\frac{z}{R_k}\bigg)\cdot\bigg(-\frac{s_k}{z}-\frac{h^{\prime}(z)}{1+h(z)}\bigg)\\
&=R_k\cdot H_{n_k}\bigg(\frac{z}{R_k}\bigg)\cdot\bigg(-\frac{s_k}{z}-\frac{O(R^{-2}_k)}{1+O(R^{-1}_k)}\bigg)\\
&=R_k\cdot H_{n_k}\bigg(\frac{z}{R_k}\bigg)\cdot [s_k\cdot O(R^{-1}_k)+O(R^{-2}_k)].
\end{align*}
This, together with the fact
\beqq
\max_{|w|\leq2}|H_{n_k}(w)|\leq 2^{n_k}(2+2^{n_k})\leq2\cdot2^{2n_k}=2^{1+2n_k},
\eeqq
yields that
\beqq
H^{\prime}_{n_k}\bigg(\frac{z}{R_k}\bigg)=\left\{
  \begin{array}{ll}
   O\big(\frac{2^{1+2n_k}}{R_k}\big) , & \hbox{if}\ s_k=0 \\
  O\big(s_k\cdot2^{1+2n_k}\big)  , & \hbox{if}\ s_k\neq0.
  \end{array}
\right.
\eeqq
On the other hand,
\beqq
1-H_{n_k}(z)=1-z^{n_k}(2-z^{n_k})=(1-z^{n_k})^2
=\bigg(\frac{H^{\prime}_{n_k}(z)}{2\cdot n_k\cdot z^{n_k-1}}\bigg)^2.
\eeqq
So, at a critical point of $f$,
\begin{align}\label{est-9}
&|1-H_{n_k}(z/R_k)|
=\bigg|\frac{H^{\prime}_{n_k}(z/R_k)}{2\cdot n_k\cdot z^{n_k-1}}\bigg|^2
\leq \bigg|\frac{H^{\prime}_{n_k}(z/R_k)}{2n_k}\cdot\bigg(\frac{2}{R_k}\bigg)^{n_k-1}\bigg|^2\nonumber\\
\leq&\left\{
  \begin{array}{ll}
   \frac{2^{6n_k}}{4\cdot n^2_k\cdot R^{2n_k}_k} , & \hbox{if}\ s_k=0 \\
&\\
  \frac{s^2_k\cdot2^{6n_k}}{4\cdot n^2_k\cdot R^{2(n_k-1)}_k}, & \hbox{if}\ s_k\neq0.
  \end{array}
\right.
\end{align}
By \eqref{inequ-20}, one has
\begin{align*}
\frac{s^2_k}{n^2_k}\leq\frac{2\cdot n_k^2+2(m+\sum^{k-1}_{j=1}n_j)^2}{n^2_k}
\leq\frac{2\cdot n_k^2+2(2\cdot\tfrac{\log R_k}{\log 2})^2}{n^2_k}
\leq2+\frac{8(\log R_k)^2}{n^2_k},
\end{align*}
so,
\begin{align*}
 \frac{s^2_k\cdot2^{6n_k}}{4\cdot n^2_k\cdot R^{2(n_k-1)}_k}\leq
\frac{2^{6n_k}}{2\cdot R^{2(n_k-1)}_k}+\frac{2^{6n_k+1}\cdot(\log R_k)^2}{n^2_k\cdot R^{2(n_k-1)}_k}.
\end{align*}
By $n_k>4$ and $R_k\geq2^6$ (this is derived by $R\geq2^5$), one has $\tfrac{2^{6n_k}}{R^{2(n_k-1)}}\leq\tfrac{1}{2}$.
 By combing the above arguments, one has
\beqq
|1-H_{n_k}(z/R_k)|\leq\frac{1}{4}.
\eeqq
Hence, at such a critical point, one has $|H_{n_k}(z/R_k)|\geq\tfrac{3}{4}$.
This, together with the discussions in the proof of Lemma \ref{inequ-15}, implies the conclusion of this lemma.
\end{proof}

\begin{lemma}\label{equ-2021-2-13-3}
The constants $R$ in the construction of the function $f$ can be taken large enough such that the critical points of $f$ in $\{z:\in\mathbb{C}:\ |z|<R\}$ are in the Fatou set. Furthermore, the constant $R$ can be chosen as large as we wish.
\end{lemma}

\begin{proof}
This lemma can be derived by the same arguments as in the proof of Lemma 14.3 in \cite{Bishop2018}.
\end{proof}

\subsection{Negative indices}\label{negative8-2-3}
In this subsection, we define the sets $A_k$, $V_k$, and $U_k$ for $k\leq0$.

For any point $z$, the forward orbit of $z$ is denoted by $\mbox{Orb}(z)=\{f^n(z):\ n\in\mathbb{N}\}$. If $\mbox{Orb}(z)\cap(\cup^{\infty}_{k=1}A_k)$ is an infinite set, then there may exist positive integers $k_0$ and $l_0$ such that $f^{k_0}(z)$ is in a small neighborhood of the origin, $\{f^{k_0+1}(z),\ f^{k_0+2}(z),....,f^{k_0+l_0}(z)\}\subset D_1$, and $f^{k_0+l_0+1}(z)\in A_1$.

Set
$$A_0:=\{z\in D_1:\ f(z)\in A_1\},$$
\beqq
A_{-k}:=\bigg\{z\in D_1:\ \{z,f(z),...,f^k(z)\}\subset D_1\ \mbox{and}\ f^{k+1}(z)\in A_1\bigg\},\ k\geq1.
\eeqq
Similarly, we could define $V_k$ and $U_k$ for $k\leq0$.
Denote
$$V_0:=\{z\in D_1:\ f(z)\in V_1\},$$
\beqq
V_{-k}:=\bigg\{z\in D_1:\ \{z,f(z),...,f^k(z)\}\subset D_1\ \mbox{and}\ f^{k+1}(z)\in V_1\bigg\},\ k\geq1;
\eeqq
and
$$U_0:=\{z\in D_1:\ f(z)\in U_1\},$$
\beqq
U_{-k}:=\bigg\{z\in D_1:\ \{z,f(z),...,f^k(z)\}\subset D_1\ \mbox{and}\ f^{k+1}(z)\in U_1\bigg\},\ k\geq1.
\eeqq

Now, we study the properties of the set $V_{-k}$ for $k\geq0$.

By Remark \ref{equ2021-7-29-1}, the disk $D(0,R)$ contains $2^N-1$ critical values of $f$, where these critical values are in the same Fatou component.  So, there exists a positive integer $T$ such that $V_{-k}$ surrounds all these $2^N-1$ critical values for $k=0,1,...,T$, and $V_{-k}$ does not surround any critical point for $k>T$. So, there is only one connected component of $V_{-k}$ for $k=0,1,...,T$, there are $2^{jN}$ connected components for $V_{-T-j}$ for any $j\geq1$. Hence, $f$ is a $2^N$-to-$1$ covering map from $V_{-k}$ to $V_{-k+1}$ for $k=0,...,T$, and each connected component of $V_{-T-j}$ has $2^N$ distinct connected components under the pre-image of $f$ for $j\geq1$. Hence, $V_{-k}$ is a union of topological annuli that surrounds the Cantor set $E$ for any $k\geq0$, and each component of $V_{-k}$ is mapped to a component of $V_{-k+1}$.

Recall the definition of $m_k$ ($k\geq1$) in \eqref{est-8}, the indices for $m_k$ ($k\leq0$) are defined:
\beq\label{est-12}
m_k:=\left\{
  \begin{array}{ll}
   2^N, & \hbox{for}\ -T\leq k\leq 0 \\
    1, & \hbox{for}\ k<-T.
  \end{array}
\right.
\eeq
The covering map $f:A_{-k}\to A_{-k+1}$ for $k\geq0$ has degree $m_k$. Let $M_0=2^{NT}=\prod_{k\leq0}m_k$, this gives an upper bound of the pre-images of a single point $z\in V_1$ that will be discovered in any connected component of $V_{-k}$, $k\geq0$.

\subsection{Partitioning the Julia set}

In this subsection, the Julia set is split into two parts according to the orbits of the points in the Julia set. For an illustration diagram of the Julia set, please refer to Figure 6 in \cite{Bishop2018}.

\begin{lemma}\label{fatou8-2-1}
\begin{itemize}
\item [(i)]Any connected component $W$ of $f^{-1}(A_j)$ is contained in $A_k$ for some $k\geq j-1$, where $j\in\mathbb{Z}$.
    \item[(ii)] The connected components of $f^{-1}(A_j)$ contained in $A_k$, $k\geq j$, are inside the petals $R_k\cdot \Om^{p}_{n_{k}}$, where $j\geq1$.
        \end{itemize}
\end{lemma}

\begin{proof}
{\bf Case (i)}
If $j\leq0$, then $f(A_j)=A_{j+1}$. If $j\geq1$, by  \eqref{valueest-4} and \eqref{valueest-5}, $f(A_k)\cap A_j=\emptyset$ for $k<j-1$. So,
any connected component $W$ of $f^{-1}(A_j)$ is contained in $A_k$ for some $k\geq j-1$.

{\bf Case (ii)} The arguments in the proof of Lemma \ref{inequ-15} will be used here.

The region considered in \eqref{inequ-16} contains the boundary of $R_k\cdot\Om^{\infty}_{n_k}$ by Lemma \ref{inequ-18}, by the conclusions there and $|H_{n_k}|\geq1$ on $\Om^{\infty}_{n_k}$, we know that there are no pre-images in  $R_k\cdot\Om^{\infty}_{n_k}$.

Now, we consider the region in $A_k\cap(R_k\cdot\Om^0_{n_k})$. By Lemma \ref{inequ-17}, the boundary of $R_k\cdot\Om^0_{n_k}$ is contained in the region defined in \eqref{inequ-16}. Since the inner boundary of $A_k$ is mapped into $B_k$, and the boundary of $R_k\cdot\Om^0_{n_k}$ is contained in \eqref{inequ-16}, where $|H_{n_k}|=1$ on the boundary of $\Om^0_{n_k}$. This, together with the fact that $f$ has no zeros in $A_k\cap(R_k\cdot\Om^0_{n_k})$ and the minimum principle, implies the conclusion of this lemma.
\end{proof}

\subsubsection{Julia set of small dimension}

\begin{definition}
For a bounded domain $G$ in $\mathbb{C}$, let $U(G)$ be the unbounded component of $\mathbb{C}\setminus G$.
 The set $\widehat{G} = \mathbb{C}\setminus U(G)$ is said to be the topological hull of $G$. Thus $\widehat{G}$ is the union of $G$ and
the bounded components of its complement. Informally, $\widehat{G}$ is obtained from $G$ by
``filling in the holes" of $G$.
\end{definition}

\begin{lemma}\cite[Theorem 2.9]{McMullen1994}\label{distorsion-7-31-3}
Let $D\subset U\subset\mathbb{C}$ be disks with
$\mbox{mod}(D,U)>m>0$. Let $f:U\to\mathbb{C}$ be a univalent map. Then there is a constant $C(m)$ such that for any $x$, $y$ and $z$ in $D$,
\beqq
\frac{1}{C(m)}|f'(x)|\leq\frac{|f(y)-f(z)|}{|y-z|}\leq C(m)|f'(x)|.
\eeqq
\end{lemma}

The idea of the arguments for the following lemma is similar with the one used in the proof of Lemma 16.3 in \cite{Bishop2018}, where some estimates are different here.

\begin{lemma}\label{smalldim-1}
 Let $Y\subset X$ be the set of points $z$ satisfying $k(z,n+1)\leq k(z,n)$ infinitely often. Given any small positive constant $\al$, if the constants $\ld$, $R$, and $N$ are sufficiently large, then $\mbox{dim}(Y)\leq \al$
\end{lemma}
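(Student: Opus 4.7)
The plan is to exploit that on $D_1 = \{|z| < R_1/4\}$, the function $f$ is a small perturbation of $F_0 = p_\ld^N$, whose Julia set has Hausdorff dimension tending to zero as $\ld \to \infty$ by assumption~(ii) in Section~\ref{hypdy-1}. In particular, by taking $\ld$ large, the invariant Cantor set $E$ of $f$ in $D_1$ satisfies $\mbox{dim}(E) < \al/2$. The strategy is to cover $Y$ by a countable union of Cantor-type sets whose dimensions are controlled by $\mbox{dim}(E)$ plus an arbitrarily small correction coming from the petal geometry.

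First I would give a combinatorial description of $Y$. For $z \in Y$, let $\mathfrak{n}_1 < \mathfrak{n}_2 < \cdots$ enumerate the times $n$ with $k(z,n+1) \le k(z,n)$. By Lemmas~\ref{valueest-17} and~\ref{inequ-15}, if $k(z,\mathfrak{n}_i) \ge 1$ and $f^{\mathfrak{n}_i}(z) \in V_{k(z,\mathfrak{n}_i)}$, then $f^{\mathfrak{n}_i+1}(z) \in A_{k(z,\mathfrak{n}_i)+1}$, since Julia points in $V_k$ must be sent into $A_{k+1}$ (the remainder of $f(V_k)$ lies in the Fatou components $B_k \cup B_{k+1}$). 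Hence $f^{\mathfrak{n}_i}(z)$ is forced into a petal $R_k \cdot \Om^p_{n_k}$, and $Y$ is precisely the set of Julia points whose forward orbits visit petals at positive scales infinitely often.

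Second, I would set up the covering. On each petal $R_k \cdot \Om^p_{n_k}$, $H_{n_k}$ is a conformal bijection onto $\mathbb{D}$, and Lemma~\ref{koebeest} gives that a normalized target disk of radius $\sigma$ in $\mathbb{D}$ pulls back to a set of diameter at most $C\sigma R_k/n_k$ inside the petal; raising to the $l_k$-th power multiplies the local branch count by $l_k$. When an orbit drops below $A_1$ and re-enters through the negative-index regions $V_{-j}$, the total degree of $f$ is bounded by $M_0 = \prod_{j\le 0} m_j$ from Section~12. The forward expansion $|f'|$ on $V_k$ is at least of order $R_k$, which follows from the modulus estimates of Section~6. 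Indexing the $N$-th generation cover by $\mathcal{C}_N$, one obtains
\[
\sum_{U \in \mathcal{C}_N} \mbox{diam}(U)^\al \le \prod_{i=1}^{N} \bigl(C \cdot M_0 \cdot n_{k_i} l_{k_i}\bigr) \cdot R^{-\al N},
\]
which, under the standing hypothesis $l_k \le \log R_k/\log 2$ together with $n_k \ge 8$ and $\sum l_k/(3/2)^{n_k} < \infty$, is dominated by $R^{-(\al - \al_0)N}$ for some $\al_0$ that can be made arbitrarily small by enlarging $R$, $\ld$, and $N$.

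Taking $\al_0 < \al/2$ and combining with $\mbox{dim}(E) < \al/2$ for the residual tracking of orbits near the $F_0$-invariant Cantor set then yields $\mbox{dim}(Y) \le \al$. The main obstacle will be organizing the distortion bookkeeping across descents at widely different scales, where the petal width $R_k/n_k$, branching factor $l_k$, and expansion $R_k$ all depend on $k$; the uniform constants coming from Lemmas~\ref{valueest-17} and~\ref{boundaryfatou} provide a scale-independent framework for the Koebe estimates, and the overall argument parallels the treatment of the analogous set in \cite[Lemma 16.4]{Bishop2018}, with the essential new ingredient being the control of the varying parameters $n_k, l_k$ via the hypotheses imposed in the construction of $f$.
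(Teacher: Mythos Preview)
Your proposal has the right skeleton---covering argument, Koebe distortion, tracking orbit drops through petals, parallel to \cite[Lemma~16.4]{Bishop2018}---but the central estimate does not close. You claim
\[
\sum_{U\in\mathcal{C}_N}\operatorname{diam}(U)^{\al}\;\le\;\prod_{i=1}^{N}\bigl(C\cdot M_0\cdot n_{k_i}l_{k_i}\bigr)\cdot R^{-\al N}
\]
is dominated by $R^{-(\al-\al_0)N}$. That would force $\prod_{i} n_{k_i}l_{k_i}\le R^{\al_0 N}$ on average, i.e.\ $n_{k_i}l_{k_i}=O(R^{\al_0})$ uniformly. But the whole point of this paper is that $n_k$ may be taken as large as $\lfloor R_k^s\rfloor$ or even $\lfloor R_k\rfloor^k$; the branching factor at scale $k$ is unbounded in $k$, so no fixed-base geometric rate can absorb it. Your contraction factor $R^{-\al}$ per step (with $R$ the fixed initial parameter) is likewise wrong: the expansion of $f$ on $V_k$ is not of order $R$ but of order depending on $R_k$ and $m_k$.

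The paper's proof handles exactly this difficulty by a different bookkeeping. Instead of a per-step product, it waits until the \emph{next} drop: starting from a component $W^n_k$, it refines to components $W^{n+q}_{k+q-1}$, counts them by $k_0^{NT}\cdot m_k m_{k+1}\cdots m_{k+q-2}$, and uses the scale-dependent diameter bound $\operatorname{diam}(W^{n+q}_{k+q-1})\le \operatorname{diam}(W^n_k)/R_{k+q-1}$. The crucial point is that the degrees $m_j$ are controlled not by $n_jl_j$ directly but by \eqref{inequ-19}--\eqref{inequ-20}, giving $m_j\le 2\log R_{j+2}/\log R_{j+1}$; these \emph{telescope}, so the product collapses to $O\bigl(2^q(\log R_{k+q-1})^2\bigr)$, which is beaten by $R_{k+q-1}^{-\al}$ for every $\al>0$ regardless of how large the individual $n_j$ are. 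This telescoping against the logarithmic bounds is the mechanism that survives arbitrary growth of $n_k$, and it is absent from your argument.
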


\begin{proof}
The arguments follows the idea of the proof of Lemma 16.3 of \cite{Bishop2018}. The idea of the construction of the covers is the same, but the estimates of diameter of the elements in covers are different.

The set $Y$ is a subset of the union of $A_k$, i.e., $Y\subset \cup A_k$, and $Y$ is an invariant subset of $f$.

Now, we show that $\mbox{dim}(Y\cap A_m)\leq\al$ for any $m\geq1$. The idea of the proof is the introduction of nested covering of $Y\cap A_m$.

For any $k\geq m$, let $W^n_k$ be some components of $f^{-n}(A_k)$ that lie inside $A_m$.
 For convenience, $W^0_m=f^{0}(A_m)=A_m$ is the first covering of $Y\cap A_m$. For $z\in W^0_m\cap Y$, by the definition of $Y$, $k(z,n+1)\leq k(z,n)$ infinitely often, it is possible that $f(z)\in A_{m_0}$ with $m_0\leq m$ or $f(z)\in A_{m+1}$. If $f(z)\in A_{m_0}$, then we stop and cover $z$ by a component of $f^{-1}(A_{m_0})$; if $f(z)\in A_{m+1}$, we continue the iteration and wait for the minimal positive integer $q$ such that $f^{q-1}(z)\in A_{m+q-1}$ and $f^{q}(z)\in A_{m'_0}$ with $m'_0\leq m+q-1$, then we stop and cover $z$ by a component of $f^{-q}(A_{m'_0})$.

Inductively, a refinement of a sequence of nested covers for $Y\cap A_m$ is defined as follows.  Suppose
$W^n_k\subset f^{-n}(A_k)$ is an element of the current cover, for $z\in W^n_k\cap Y$, it follows from the definition of $Y$ that $f^{n+q}(z)\in A_j$ with $j\leq k+q-1$, where $q$ is the minimal positive integer. At this point we stop and cover $z$ by a component of the form $W^{n+q}_j$, that is, some component of $f^{-(n+q)}(A_j)$. Thus, $Y\cap W^n_k$ can be covered by components of the form $W^{n+q}_j$, where $q\geq1$ and $j\leq k+q-1$. So, the set $W^n_k\cap Y$ can be covered by a refinement covering, where the components defined in this way.

For the cover for $Y$,  $W^n_k$ can be replaced by the topological hull $\widehat{W}^n_k$, since every component with $j<k+q-1$ is contained in some hole of a topological hull, and $W^{n+q}_{k+q-1}$ and $\widehat{W}^{n+q}_{k+q-1}$ have the same diameter, implying that it is sufficient to consider the case $j=k+q-1$,  Thus, the application of these covers (topological hull) does not change the sum in the definition of Hausdorff measure and dimension. Note that using the filled-in components requires us to consider the cases $q=0$ and $q\geq1$, where $q=0$ and $q\geq1$ correspond to  the first and second cases of Lemma \ref{fatou8-2-1}, respectively.

If the $\al$-sum of the refinement of the covers tends to zero, then the $\mbox{dim}(Y\cap A_m)\leq\al$. The decay rate for the $\al$-sum is geometrically fast. There are two different situations ($q=0$ and $q>0$):
\beq\label{equest-1}
\sum_{W^{n}_{k-1}\subset\widehat{W}^n_{k}}\mbox{diam}(W^n_{k-1})^{\al}\leq\frac{1}{4}\mbox{diam}(W^n_k)^{\al}\ \mbox{for}\ q=0
\eeq
and
\beq\label{inequ-24}
\sum_{q\geq1}\sum_{W^{n+q}_{k+q-1}\subset W^n_k}\mbox{diam}(W^{n+q}_{k+q-1})^{\al}\leq\frac{1}{4}\mbox{diam}(W^n_k)^{\al}\ \mbox{for}\ q>0,
\eeq
where the case $q=0$ refers that $Y\cap(\widehat{W}^n_k\setminus W^n_k)$ is covered by the components of the form $\widehat{W}^n_{k-1}$ in the next generation.

We show \eqref{equest-1}.

For $k\geq1$, it follows from Lemma \ref{distorsion-7-31-3} and Remark \ref{dis8-2-2} that
\beqq
\frac{\mbox{diam}(W^n_{k-1})}{\mbox{diam}(W^n_k)}
\leq C\frac{\mbox{diam}(f^n(W^n_{k-1}))}{\mbox{diam}(f^n(W^n_k))}
=C\frac{\mbox{diam}(A_{k-1})}{\mbox{diam}(A_k)}\leq C\frac{1}{R_1},
\eeqq
where $C$ is a positive constant determined by Remark \ref{dis8-2-2} and Lemma \ref{distorsion-7-31-3}, and it is independent on $k$.

So, $W^n_{k-1}$ has one component in $\widehat{W}^n_k$ and the diameter is $O(R^{-1}_1)\cdot \mbox{diam}(W^n_k)$. For $k\leq0$, there is a bounded number of connected components of $W^{n}_{k-1}$ inside $W^n_k$, where this number is dependent on the number of $N$. By Lemma \ref{equ2021-2-13-2}, Remark \ref{equ2021-7-29-1}, and the discussions in Subsection \ref{negative8-2-3}, the ratio of the diameter of each component of $W^n_{k-1}$ and the diameter of $W^n_{k}$ is small for large enough $\ld$. Hence,
\eqref{equest-1} can be derived by these arguments.

Now, we prove \eqref{inequ-24}.

By the refinement of the covers, one has
\begin{center}
\begin{align*}
W^{n+q}_{k+q-1}\subset W^n_k\subset& A_m,\\
f^n(W^{n+q}_{k+q-1})\subset f^n(W^n_k)=&A_k,\\
f^{n+1}(W^{n+q}_{k+q-1})\subset& A_{k+1},\\
f^{n+2}(W^{n+q}_{k+q-1})\subset& A_{k+2},\\
\vdots\quad\quad\vdots &\\
f^{n+q-1}(W^{n+q}_{k+q-1})\subset& A_{k+q-1},\\
f^{n+q}(W^{n+q}_{k+q-1})\subset& A_{k+q-1}.
\end{align*}
\end{center}

Recall the definition of $m_k$ in \eqref{est-8} ($k>0$) and \eqref{est-12} ($k\leq0$), and \eqref{inequ-19} and \eqref{inequ-20} give upper bounds for $m_k$ ($k>0$).

The first $q-1$ maps are restrictions of the covering maps $A_{k+i-1}\supset f^{-1}(A_{k+i})\to A_{k+i}$, and the final one is the restriction of a petal. The $i$th covering map for $i=1,...,q-1$ is
\beq\label{degreeest-8-4-3}
\left\{
  \begin{array}{ll}
   m_{k+i-1}\text{-}\mbox{to}\text{-}1,\ & \hbox{if}\ k+i-1\geq1\ \ (\mbox{by the degree defined in}\ \eqref{est-8})\\
    2^N\text{-}\mbox{to}\text{-}1,\ & \hbox{if}\ -T\leq k+i-1\leq0\ \ (\mbox{by the discussions in Subsection \ref{negative8-2-3}})\\
    1\text{-}\mbox{to}\text{-}1,\ & \hbox{if}\ k\leq-T\ \ (\mbox{by the discussions in Subsection \ref{negative8-2-3}}).
  \end{array}
\right.
\eeq
The number of possible new components bringing by the refinement of the cover has an upper bound:
\beqq
2^{NT}\cdot m_k\cdot m_{k+1}\cdot m_{k+2}\cdots m_{k+q-2}\ \forall k\geq1.
\eeqq
The size of a single pre-image is given by the final petal map:
\beqq
\mbox{diam}(W^{n+q}_{k+q-1})\leq \frac{R_{k+q-1}}{R_{k+q}}\cdot\mbox{diam}(W^n_k)\leq\frac{\mbox{diam}(W^n_k)}{R_{k+q-1}}.
\eeqq
So, by \eqref{inequ-19} and \eqref{inequ-20} in Corollary \ref{degreeest-8-4-1}, one has
\begin{align*}
& 2^{NT}\cdot m_k\cdot m_{k+1}\cdot m_{k+2}\cdots m_{k+q-3}\cdot m_{k+q-2}\cdot\bigg(\frac{\mbox{diam}(W^n_k)}{R_{k+q-1}}\bigg)^{\alpha}\\
\leq&   2^{NT}\cdot \bigg(2\frac{\log R_{k+2}}{\log R_{k+1}}\bigg)\cdot
\bigg(2\frac{\log R_{k+3}}{\log R_{k+2}}\bigg)\cdot\bigg(2\frac{\log R_{k+4}}{\log R_{k+3}}\bigg)\cdots\\
&\times \bigg(2\frac{\log R_{k+q-1}}{\log R_{k+q-2}}\bigg)\cdot
\bigg(2\frac{\log R_{k+q-1}}{\log 2}\bigg) \cdot \bigg(\frac{\mbox{diam}(W^n_k)}{R_{k+q-1}}\bigg)^{\alpha}\\
=&  \frac{2^{NT} 2^{q}}{\log2\log R_{k+1}}\cdot \frac{(\log R_{k+q-1})^2}{R^{\alpha}_{k+q-1}}\cdot\mbox{diam}(W^n_k)^{\alpha}.
\end{align*}
This, together with Lemma \ref{seriescon-8-4-2}, yields \eqref{inequ-24}.
\end{proof}

\subsubsection{Julia set in the escaping set}
In this subsection, a geometric description of the set $Z$ is given.

\begin{lemma}\cite[Lemma 18.1]{Bishop2018}\label{circleclose}
Suppose $h$ is a holomorphic function on $A=\{z:\ 1<|z|<4\}$ and $|h|$ is bounded by $\varepsilon$ on $A$. Let $H(z)=(1+h(z))z^l$, where $l$ is a non-zero integer. For any fixed $\tht$, the segment $S(\tht)=\{r e^{i\tht}:\ \tfrac{3}{2}\leq r\leq\tfrac{5}{2}\}$ is mapped by $H$ to a curve that makes angle at most $O(\tfrac{\varepsilon}{l})$ with any radial ray it meets.
\end{lemma}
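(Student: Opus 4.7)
The plan is to parametrize the segment by $r$, write out $H$ along it in polar form, and then simply compute the angle between the tangent vector to the image curve and the radial direction at the image point. I would then use a Cauchy-type estimate on $h'$ to bound the angle.

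First, I would fix $\theta$ and set $w(r) := H(re^{i\theta}) = (1+h(re^{i\theta}))\, r^l e^{il\theta}$ for $r \in [3/2,5/2]$. The radial direction at the image point $w(r)$ has argument $l\theta + \arg(1+h(re^{i\theta}))$, while the tangent direction $w'(r)$ satisfies, after differentiating and pulling out $r^{l-1}e^{il\theta}$,
\[
w'(r) \;=\; r^{l-1}e^{il\theta}\Bigl[\, l\bigl(1+h(re^{i\theta})\bigr) + h'(re^{i\theta})\, re^{i\theta} \,\Bigr].
\]
Hence the angle between the tangent and the radial direction is
\[
\arg\!\left(\frac{w'(r)}{w(r)}\right) \;=\; \arg\!\left( l + \frac{h'(re^{i\theta})\, re^{i\theta}}{1+h(re^{i\theta})} \right).
\]

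Second, I would bound $|h'|$ on $S(\theta)$. Since $|h|\le\varepsilon$ on $A=\{1<|z|<4\}$, and every point of $S(\theta)$ lies at distance at least $1/2$ from $\partial A$, Cauchy's estimate yields $|h'(z)| = O(\varepsilon)$ on $S(\theta)$. Therefore $|h'(re^{i\theta})\, re^{i\theta}| = O(\varepsilon)$ and $|1+h(re^{i\theta})| \ge 1-\varepsilon$, so writing $v/u$ for the quotient above we have $|v/u| = O(\varepsilon)$. Factoring $l$ out inside the argument gives
\[
\arg\!\left( l + \frac{v}{u}\right) \;=\; \arg l + \arg\!\left(1 + \frac{v}{lu}\right) \;=\; \arg l + O\!\left(\frac{\varepsilon}{|l|}\right),
\]
since $\arg(1+\zeta) = O(|\zeta|)$ for small $|\zeta|$. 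Because $\arg l \in \{0,\pi\}$ is aligned with the radial direction (angles are taken modulo $\pi$), we conclude that the angle with the radial ray is $O(\varepsilon/|l|)$, which is the desired $O(\varepsilon/m)$ bound (the $m$ in the statement being the integer $l$).

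The only genuine step is the Cauchy estimate, and the only subtlety is ensuring that the segment $S(\theta)$ sits with definite distance from the boundary of $A$, which it does by choice of the radii $3/2$ and $5/2$. No obstacle beyond bookkeeping is expected; the lemma is essentially the observation that a $C^1$-small multiplicative perturbation of $z\mapsto z^l$ sends radial segments to nearly radial curves, with improvement by a factor $1/l$ because the leading angular contribution $l\theta$ is killed by differentiating in the radial direction.
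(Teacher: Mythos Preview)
The paper does not supply its own proof of this lemma; it simply quotes the result from \cite[Lemma 18.1]{Bishop2018}. Your argument is correct and is essentially the standard proof: parametrize the image curve by $r$, compute $\arg(w'(r)/w(r))$, and bound the perturbative term using Cauchy's estimate for $h'$ on the sub-annulus $\{3/2\le |z|\le 5/2\}$, which sits at distance at least $1/2$ from $\partial A$. The factor $1/|l|$ appears exactly as you explain, from factoring $l$ out of $l + \tfrac{h'(re^{i\theta})re^{i\theta}}{1+h}$, and the residual $\arg l\in\{0,\pi\}$ is absorbed because angles with a radial line are taken modulo $\pi$. One trivial caveat: the lemma tacitly assumes $l\neq 0$ (otherwise $O(\varepsilon/l)$ is meaningless), which holds in every application in the paper since $l=m_k\ge 1$.
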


\begin{remark}\label{roundann-8-17-1}
Let $W\subset V_k$ be the pre-image of $V_{k+1}$ under the map $f$. Note that $f$ is a small perturbation of a power function restricted to $V_{k+1}$ by \eqref{inequ-21} of Lemma \ref{est-1}. This, together with the fact that the component of the pre-image of a round annulus under a power function is another round annulus, implies that $W$ is a small perturbation of a round annulus. Lemma \ref{circleclose} gives a precise description of this fact.
\end{remark}

\begin{lemma}\label{packone}
Let $Z\subset X$ be the set of points $z$ in the Julia set with $k(z,n+1)=k(z,n)+1$ for all sufficiently large $n$. Then $Z$ is a union of $C^1$ closed Jordan curves, and $Z$ has locally finite $1$-measure.
\end{lemma}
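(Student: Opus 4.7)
The plan is to realize $Z$ as the union of countably many smooth Jordan curves, each built as a nested intersection of annular pre-images, and then to control the total $1$-measure in compact sets by the expansion of $f$.

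For each sufficiently large $k$, I would build a core Jordan curve $\Ga_k \subset V_k$ with $f(\Ga_k) = \Ga_{k+1}$. The key input is Lemma \ref{est-1}, which writes $f(z) = c_k z^{m_k}(1 + O(\ep_k))$ on $V_k$, together with the derivative bound $|f'(z)| \gtrsim m_k R_{k+1}/R_k \gg 1$ that follows from it. Set $W_k^{(1)} := V_k \cap f^{-1}(V_{k+1})$; since $f$ acts as an almost-monomial $m_k$-fold annular covering of $V_{k+1}$ by Lemma \ref{valueest-17}, $W_k^{(1)}$ is a topological annulus inside $V_k$. Inductively define
\beqq
W_k^{(N)} := V_k \cap \bigcap_{j=1}^{N} f^{-j}(V_{k+j}),
\eeqq
a nested sequence of topological annuli. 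The uniform expansion shrinks their moduli geometrically, so $\Ga_k := \bigcap_N W_k^{(N)}$ is a single Jordan curve encircling the origin.

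To upgrade $\Ga_k$ to a $C^1$ curve, I would apply Lemma \ref{circleclose} to the near-monomial $f|_{V_k}$: each boundary circle of $W_{k+j}^{(N)}$ pulls back to a curve meeting every radial ray at angle at most $O(\ep_k/m_k)$. Iterating and using the summability of $\ep_k$, the tangent direction along $\partial W_k^{(N)}$ is Cauchy in $N$, and its uniform limit gives the $C^1$ structure on $\Ga_k$. Next, if $z \in Z$, then by Lemma \ref{inequ-15} and the fact that petal orbits feed back into $D_1$ (which would violate the strict-increase condition on $k(z,n)$), we must have $f^n(z) \in V_{k(z,n)}$ for all large $n$; thus some iterate of $z$ lies on $\Ga_{k'}$, and $Z = \bigcup_{n \geq 0} f^{-n}\bigl(\bigcup_{k} \Ga_k\bigr)$. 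Since all critical points of $f$ were shown to lie in the Fatou set, every branch of $f^{-1}$ is a local biholomorphism in a neighborhood of $\bigcup_k \Ga_k$, so each connected component of $f^{-n}(\Ga_k)$ is again a $C^1$ closed Jordan curve.

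For locally finite $1$-measure, fix a compact set $K$. Only finitely many $\Ga_k$ intersect $K$ (because $R_k \to \infty$ separates them), and each $\Ga_k \cap K$ has length $O(R_k)$ since $\Ga_k$ is close to a circle of radius $\approx R_k$. For the backward orbits, on any compact piece the Koebe distortion control (Lemma \ref{koebeest}) together with the expansion factor $\gtrsim m_j R_{j+1}/R_j$ forces the total length of $f^{-n}(\bigcup_k \Ga_k) \cap K$ to contract geometrically in $n$, giving a summable bound. The main obstacle I anticipate is the $C^1$ control in the inverse limit: Lemma \ref{circleclose} handles a single pre-image step, and turning this into genuine Cauchy-convergence of tangent directions across all scales requires carefully exploiting both the summability of $\ep_k$ and the uniform expansion to rule out accumulated angular distortion.
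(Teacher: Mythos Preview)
Your construction of the core curves $\Gamma_k$ via nested annuli and the $C^1$ upgrade via Lemma \ref{circleclose} together with the summability of $\ep_k$ is essentially the paper's own argument; the paper writes the nested annuli as $\Gamma_{k,n}=\{z\in A_k: f^j(z)\in A_{k+j},\ j=1,\dots,n\}$, pulls back the circular foliation of $A_{k+n}$, and observes that successive foliations differ in angle by $O(\ep_{k+n})$, so the limit is $C^1$. One small inaccuracy: petal orbits need not feed back into $D_1$ specifically---a petal in $A_k$ contains preimages of $A_j$ for every $j\le k$---but your conclusion that points of $Z$ must eventually sit in the $V_k$'s is correct.

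The genuine gap is in your locally-finite $1$-measure argument. You invoke only the expansion $|f'|\gtrsim m_j R_{j+1}/R_j$ on $V_j$ to claim that the total length of $f^{-n}\bigl(\bigcup_k\Gamma_k\bigr)\cap K$ contracts geometrically in $n$. But $f^{-1}(\Gamma_k)$ does not live only in $V_{k-1}$: it also has components inside the petals $R_j\cdot\Om^p_{n_j}$ for every $j\ge k$, and iterating backwards produces a tree of components indexed by arbitrary finite itineraries through petals before landing on a $\Gamma_k$. Controlling the number of these components against their shrinking diameters is precisely the combinatorial estimate carried out in the proof of Lemma \ref{smalldim-1}. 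The paper exploits this directly: each component of $Z$ sits in a unique set $W^n_k$ from that proof, and the sum of $\mbox{diam}(W^n_k)$ over the refinement tree is finite by the inequalities \eqref{equest-1} and \eqref{inequ-24} with $\al=1$. Your expansion bound on $V_j$ alone does not see the petal branches, so as written the argument is incomplete; to finish it you would have to reproduce the bookkeeping of Lemma \ref{smalldim-1}, at which point it is cleaner to simply quote it.
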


\begin{proof}
The idea of the arguments follows from the study of Lemma 16.4 of \cite{Bishop2018}. For convenience of the readers, an outline of the whole arguments is provided.

For $W$ introduced in Remark \ref{roundann-8-17-1}, the width is approximately $R_k/m_k$, and each boundary component of $W$ is a smooth curve which is $\ep_k$-close to circles by Lemma \ref{circleclose}, where $\ep_k$ is specified in \eqref{appep-8-17-2}.

For $k,n\in\mathbb{N}$, consider the set
\beqq
\Gamma_{k,n}=\{z\in A_k:\ f^j(z)\in A_{k+j},\ j=1,...,n\}.
\eeqq
For fixed $k$, $\Ga_{k,n+1}\subset\Ga_{k,n}$, that is, these are nested topological annuli with widths decaying to zero uniformly.
The pulling back circles in $A_{k+n}$ by the map $f^{-n}$, which go around $\Gamma_{k,n}$ once, form a foliation of $\Gamma_{k,n}$. The angle between the foliation curves of $\Gamma_{k,n+1}$ and those of $\Gamma_{k,n}$ is at most $O(\ep_k)$ by Lemma \ref{circleclose}. By the expression of $\ep_k$ in \eqref{appep-8-17-2} with $l_k=1$, one has that $O(\sum_{k\geq1}\ep_k)$ is finite. This, together with $\Ga_{k,n+1}\subset\Ga_{k,n}$, yields that the limit of $\Ga_{k,n}$ as $n\to\infty$ is a $C^1$ Jordan curve, denoted by $\Ga_k$, which makes angle with the circular arcs foliating $V_k$ no larger than $O(\sum_{k\geq1}\ep_k)$. Hence, the length of $\Ga_k$ is a multiple of its diameter.

Now, to show the Hausdorff measure is finite, it suffices to show that the sums of the lengths of all the components of $Z$ in a bounded region of the plane is finite. Since each component of $Z$ is associated to a unique set of the form $W^n_k$, where $W^n_k$ is introduced in the proof of Lemma \ref{smalldim-1}, the sum of the diameters over components of $Z$ is dominated by the sum of diameters over sets of the form $W^n_k$ and $\al=1$ in the arguments of the proof of Lemma \ref{smalldim-1}, including \eqref{equest-1} and \eqref{inequ-24}. This, together with the fact that the exponentially decay rate of the estimate in \eqref{equest-1} and \eqref{inequ-24}, yields that the sum is finite.
\end{proof}

\subsection{The shape of the Fatou components}

In this subsection, the geometric structure of the Fatou set is described. We will show that each connected component of the Fatou set is an infinitely connected domain. Figure \ref{fatoucomp} is an illustration diagram of the Fatou set  (or Figure 1 in \cite{Bishop2018}). Recall that $f$ is univalent on each component of $R_j\cdot\Om^p_{n_j}$

For any positive integer $k$, let $\Om_k$ be a connected component of the Fatou set that contains the inner boundary component of $A_k$. Let $\ga_k$ be an outer boundary curve of $\Om_k$ satisfying that $\ga_k\subset V_k$ (by the definition of $V_k$ in \eqref{vkregion8-4-5} and Lemma \ref{valueest-17}), $\ga_k$ is a $C^1$ closed Jordan curve approximating some circle (by the discussions in the proof of Lemma \ref{packone}), $\ga_k$ separates $\Om_k$ from $\infty$, and $\ga_k$ is also an inner boundary curve of $\Om_{k+1}$. Further, the inner boundary curve of $\Om_k$ is also the outer boundary $\ga_{k-1}$ of $\Om_{k-1}$ for $k\geq2$.

Next, except for the inner and outer boundary curves of $\Om_k$, we classify other boundary components of $\Om_k$, which are also curves. The other boundary components of $\Om_k$ will be put into different ``levels" according to the following rule: components of level $j\geq k$ of $\Om_k$ are those curves which are mapped onto the curves $\ga_j$ by $f^{j-k+1}$.

These components are determined by the petals in each $R_j\cdot \Om^p_{n_j}$, where the set $W^n_k$ introduced in Lemma \ref{smalldim-1} will be used. This will imply that there are infinitely many holes in $\Om_k$, or, $\Om_k$ is infinitely connected domain.

First, we study the case $j=k$, that is, $f^{-1}(\ga_k)\cap (R_k\cdot \Om^p_{n_k})$. Since there are $n_k$ connected components in the $R_k\cdot\Om^p_{n_k}$, and these components are in $W^1_k$ , where $W^1_k=A_k\cap f^{-1}(A_k)$ is specified in Lemma \ref{smalldim-1} and Lemma \ref{inequ-15} is used.

Second, we consider the case $j=k+1$, $f^{-2}(\ga_{k+1})\cap  (R_k\cdot \Om^p_{n_k})$, in other words, the orbits travel from the petal $R_k\cdot \Om^p_{n_k}$ to the petal  $R_{k+1}\cdot \Om^p_{n_{k+1}}$, then $\ga_{k+1}$. So, there are  $n_k\cdot n_{k+1}$ connected components by Remark \ref{geofact-8-6-2}. This is corresponding to $W^{2}_{k+1}$, where $W^{2}_{k+1}$ is some component of $f^{-2}(A_{k+1})$ that lie inside $A_k$, and is introduced in the proof of Lemma \ref{smalldim-1}.

Inductively, we can study the case  $j>k+1$, that is, the orbits go through $j-k+1$ petals, $R_k\cdot\Om^{p}_{n_k}$, $R_{k+1}\cdot\Om^{p}_{n_{k+1}}$,...,$R_{j}\cdot\Om^p_{n_j}$, the total number of connected components is $n_k\times n_{k+1}\times\cdots\times n_j$ by Remark \ref{geofact-8-6-2}.

Now, we consider the critical points in $\Om_k$. By the calculation of critical points in \eqref{deriva-1}--\eqref{deriva-3}, the solutions of  $z^{s_k-1}=0$ do not give any critical point. The critical points are solutions of \eqref{deriva-3}, the total number is $n_k$. The map is a $m_k$-to-$1$ branched cover from $\Om_k$ to $\Om_{k+1}$, with the outer boundary mapping to the outer boundary (as a $m_k$-to-$1$ map), the inner boundary is mapped to the inner boundary (as a $m_{k-1}$-to-$1$ map), where $\Om_k$ is open.

For $k\leq0$, the Fatou components $\Om_k$ are defined as inverse images of $\Om_{k+1}$ under $f$.
By applying similar discussions as in \eqref{degreeest-8-4-3}, one has
 \begin{itemize}
 \item  $f$ is a $2^N$-to-$1$ covering map for $-T\leq k\leq0$;
   \item  $f$ is a $1$-to-$1$ conformal for $k<-T$.
\end{itemize}
By Remark \ref{equ2021-7-29-1} and Lemma \ref{criticalfatou-8-17-3}, all the critical points of $f$ are in the components of $\Om_k$ for $k=-T$ and $k\geq1$, every other component of the Fatou set is a conformal image of one of these and hence has the same geometry as $\Om_k$ for some $k\geq-T$, up to bounded distortion.

\subsection{Packing dimension}\label{packingdim-1}

In this subsection, we show the packing dimension is equal to $1$, where the
packing dimension agrees with the local upper Minkowski dimension for Julia sets of entire functions \cite{RipponStallard2005}.

\begin{lemma}\cite[Lemma 20.1]{Bishop2018}\label{equest-5}
Suppose $\Om$ is a bounded open set containing open subsets $\{\Om_j\}$ such that the measure of $\Om\setminus\cup\Om_j$ is zero, then for any $1\leq s\leq 2$, one has
\beqq
\sum_{Q\in \mathcal{W}(\Om)}\mbox{diam}(Q)^s\leq\sum_{j}\sum_{Q\in\mathcal{W}(\Om_j)}\mbox{diam}(Q)^s,
\eeqq
where $\mathcal{W}(\Om)$ and $\mathcal{W}(\Om_j)$ are Whitney decomposition of $\Om$ and $\Om_j$, respectively.
\end{lemma}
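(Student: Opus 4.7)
The plan is a cube-by-cube comparison. Fix $Q \in \mathcal{W}(\Om)$; since $\Om \setminus \bigcup_j \Om_j$ has Lebesgue measure zero, for almost every $x \in Q$ there exist $j$ and a unique Whitney cube $Q'(x) \in \mathcal{W}(\Om_j)$ with $x \in Q'(x)$. Both $Q$ and $Q'(x)$ are dyadic cubes through $x$, so by dyadic nesting either $Q'(x) \subset Q$ or $Q \subset Q'(x)$. I would rule out the second case by combining the inclusion $\Om_j \subset \Om$, which forces $\mbox{dist}(Q'(x), \partial \Om_j) \leq \mbox{dist}(Q'(x), \partial \Om)$, with the Whitney maximality used to define $\mathcal{W}(\Om)$: a dyadic cube strictly containing $Q$ that still satisfies the Whitney distance bound relative to $\partial \Om$ would contradict the fact that $Q$'s parent $Q^{\uparrow}$ already fails that bound. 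Hence $Q'(x) \subset Q$.

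Set $\mathcal{W}_Q = \{Q' \in \bigcup_j \mathcal{W}(\Om_j) : Q' \subset Q\}$. By the previous step this family covers $Q$ up to a measure-zero set; cubes belonging to several $\mathcal{W}(\Om_j)$ are listed with multiplicity, which only inflates the right-hand side. In dimension $d = 2$ the tiling identity (counted without multiplicity) gives $\sum_{Q' \in \mathcal{W}_Q} \ell(Q')^2 \geq \ell(Q)^2$. Writing $r_i = \ell(Q_i')/\ell(Q) \leq 1$, for $1 \leq s \leq 2$ one has $r_i^s \geq r_i^2$, so $\sum_i r_i^s \geq \sum_i r_i^2 \geq 1$, that is,
\beqq
\mbox{diam}(Q)^s \;\leq\; \sum_{Q' \in \mathcal{W}_Q} \mbox{diam}(Q')^s,
\eeqq
where the shared constant $\sqrt{d}$ relating diameter to side length cancels. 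Summing over $Q \in \mathcal{W}(\Om)$ and noting that each $Q' \in \bigcup_j \mathcal{W}(\Om_j)$ is contained in at most one $Q \in \mathcal{W}(\Om)$ (again by dyadic nesting, since $Q' \subset \Om$) completes the proof.

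The delicate step is showing $Q'(x) \subset Q$ rather than merely $\ell(Q'(x)) \leq C\,\ell(Q)$ for some absolute constant; this requires that the two Whitney decompositions be constructed with the same Whitney parameter $\lambda$. If one allowed a mismatch, the argument would still give the same inequality up to a purely dimensional constant, which would be enough for every application in the paper, where the Whitney sums are used only to decide convergence or divergence (cf.\ the exponent-of-convergence characterization in Lemma \ref{equest-3}).
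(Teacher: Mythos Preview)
The paper does not supply its own proof of this lemma; it is quoted directly from \cite[Lemma~20.1]{Bishop2018} and left unproved, so there is nothing to compare against within the present paper. Your argument is correct and is essentially the standard one: the key observation that any $Q'\in\mathcal{W}(\Om_j)$ is contained in the (unique) $Q\in\mathcal{W}(\Om)$ it meets follows exactly as you say from $|Q'|\le\mbox{dist}(Q',\partial\Om_j)\le\mbox{dist}(Q',\partial\Om)$ together with the maximality of $Q$, and the passage from the area identity $\sum\ell(Q')^2\ge\ell(Q)^2$ to the $s$-sum via $r_i^s\ge r_i^2$ for $r_i\le1$ is the right way to use the hypothesis $1\le s\le 2$. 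Your closing remark about needing the same Whitney parameter $\lambda$ in both decompositions (and that a constant would suffice otherwise) is also accurate and matches how the lemma is used here, namely only through Lemma~\ref{equest-3} to decide convergence of Whitney sums.
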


\begin{lemma}\cite[Lemma 20.2]{Bishop2018}\label{equest-6}
If $f:\Om_1\to\Om_2$ is bi-Lipschitz, then for any $0<s\leq2$, one has
\beqq
\sum_{Q\in\mathcal{W}(\Om_1)}\mbox{diam}(Q)^s\simeq\sum_{Q^{\prime}\in\mathcal{W}(\Om_2)}\mbox{diam}(Q^{\prime})^s.
\eeqq
\end{lemma}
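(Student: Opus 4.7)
The plan is to exploit the fact that a bi-Lipschitz map $f:\Om_1\to\Om_2$ distorts diameters and distances by at most a bounded factor, so that each Whitney cube of $\Om_1$ corresponds (up to bounded multiplicity) to a family of Whitney cubes of $\Om_2$ of comparable diameter, and a symmetric argument via $f^{-1}$ then yields a two-sided comparison of the two Whitney sums. Let $L\geq1$ denote the bi-Lipschitz constant of $f$, so that $L^{-1}|x-y|\leq|f(x)-f(y)|\leq L|x-y|$ for all $x,y\in\Om_1$.

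I would first record the geometric comparability. Since $f$ extends to a bi-Lipschitz bijection $\overline{\Om_1}\to\overline{\Om_2}$ sending $\partial\Om_1$ onto $\partial\Om_2$, for any subset $E\subset\Om_1$ one has $\mbox{diam}(f(E))\simeq\mbox{diam}(E)$ and $\mbox{dist}(f(E),\partial\Om_2)\simeq\mbox{dist}(E,\partial\Om_1)$, with constants depending only on $L$. In particular, for $Q\in\mathcal{W}(\Om_1)$, the Whitney property $|Q|\simeq\mbox{dist}(Q,\partial\Om_1)$ transfers to $f(Q)\subset\Om_2$ and gives $\mbox{diam}(f(Q))\simeq\mbox{dist}(f(Q),\partial\Om_2)\simeq\mbox{diam}(Q)$.

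Next, define $N(Q):=\{Q'\in\mathcal{W}(\Om_2):Q'\cap f(Q)\neq\emptyset\}$ and establish two properties: (i) every $Q'\in N(Q)$ satisfies $\mbox{diam}(Q')\simeq\mbox{diam}(Q)$, and (ii) $|N(Q)|\leq C_1$ for a constant $C_1$ independent of $Q$. Property (i) holds because any $Q'$ meeting $f(Q)$ has $\mbox{dist}(Q',\partial\Om_2)$ comparable to $\mbox{diam}(f(Q))$, and the Whitney condition then forces $\mbox{diam}(Q')\simeq\mbox{diam}(f(Q))\simeq\mbox{diam}(Q)$. Property (ii) is a packing estimate: the cubes $Q'\in N(Q)$ are interior-disjoint, lie in a bounded neighbourhood of $f(Q)$, and each has area $\simeq\mbox{diam}(Q)^d$, so only $O(1)$ can fit.

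Finally, since $f$ is surjective, $\{f(Q)\}_{Q\in\mathcal{W}(\Om_1)}$ covers $\Om_2$, so $\bigcup_{Q}N(Q)=\mathcal{W}(\Om_2)$; applying the same construction to $f^{-1}:\Om_2\to\Om_1$ shows that each $Q'\in\mathcal{W}(\Om_2)$ belongs to $N(Q)$ for at most $C_2$ distinct $Q$. Combining these,
\beqq
\sum_{Q\in\mathcal{W}(\Om_1)}\mbox{diam}(Q)^s\lesssim\sum_{Q\in\mathcal{W}(\Om_1)}\sum_{Q'\in N(Q)}\mbox{diam}(Q')^s\lesssim C_2\sum_{Q'\in\mathcal{W}(\Om_2)}\mbox{diam}(Q')^s,
\eeqq
and the reverse inequality follows from the same reasoning applied to $f^{-1}$. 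The only mildly delicate step is verifying that a bi-Lipschitz map on an open set identifies the two metric boundaries so that the distance comparison is valid; this is an immediate consequence of the uniform continuity of $f$ and $f^{-1}$, and once it is in hand the rest of the proof is bookkeeping.
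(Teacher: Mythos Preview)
The paper does not give its own proof of this lemma; it simply cites \cite[Lemma 20.2]{Bishop2018} and moves on. Your argument is correct and is exactly the standard proof one would expect: transfer the Whitney geometry through the bi-Lipschitz map, use the comparability of $\mbox{dist}(\cdot,\partial\Om_1)$ and $\mbox{dist}(f(\cdot),\partial\Om_2)$ to see that cubes in $N(Q)$ have size $\simeq\mbox{diam}(Q)$, bound $|N(Q)|$ by a packing argument, and then symmetrize via $f^{-1}$. Your closing remark about identifying the metric boundaries is the only genuinely nontrivial point, and you handle it correctly; once $\mbox{dist}(f(x),\partial\Om_2)\simeq\mbox{dist}(x,\partial\Om_1)$ is established, the rest is indeed bookkeeping.
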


\begin{lemma}\cite[Theorem 20.3]{Bishop2018}\label{equest-7}
For any annulus $\{z\in\mathbb{C}:\ r\leq|z|\leq r+\de\}$, the $t$-Whitney sum is
\beqq
O\bigg(\frac{1}{t}\cdot\de^{t-1}\cdot r^t\bigg).
\eeqq
\end{lemma}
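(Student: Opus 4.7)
The plan is to group the Whitney cubes of $\Om=\{r\le|z|\le r+\de\}$ by their dyadic side length $s$ and bound $\sum_Q|Q|^t$ one scale at a time. The Whitney condition forces $|Q|\lesssim\mbox{dist}(Q,\partial\Om)$, and $\mbox{dist}(Q,\partial\Om)\le\de/2$ everywhere in $\Om$, so the admissible scales run over dyadic $s$ with $s\lesssim\de$. Fix such a scale $s$; cubes with side length in $[s,2s)$ must lie within a strip of width $O(s)$ of one of the two bounding circles $\{|z|=r\}$, $\{|z|=r+\de\}$.

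Next I would count: each boundary strip has area at most $2\pi(r+\de)\cdot O(s)=O(rs)$ (absorbing $\de$ into $r$ up to a harmless constant, or by splitting inner/outer separately), and the cubes at scale $s$ are pairwise essentially disjoint squares of area $\simeq s^2$, so there are $O(r/s)$ of them per scale. Consequently
\beqq
\sum_{Q\in\mathcal{W}(\Om)}|Q|^t\;\lesssim\;\sum_{\substack{s\text{ dyadic}\\ s\lesssim\de}}\frac{r}{s}\cdot s^t\;=\;r\sum_{s\lesssim\de}s^{t-1}.
\eeqq

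Finally, for $t>1$ the sum on the right is a geometric series dominated by its largest term $s\simeq\de$, giving
\beqq
\sum_{Q\in\mathcal{W}(\Om)}|Q|^t\;\lesssim\;\frac{r\,\de^{t-1}}{1-2^{-(t-1)}}\;=\;O\!\left(\frac{1}{t-1}\cdot\de^{t-1}\cdot r\right),
\eeqq
which yields the claimed form after absorbing $r$ into $r^t$ (using $r\ge 1$ in all applications) and comparing $(t-1)^{-1}$ to the weaker $t^{-1}$ once $t$ is bounded away from $1$ — the only scales of interest in the packing dimension computation are those with $t$ slightly larger than $1$, so a more honest bookkeeping actually gives the sharper constant $1/(t-1)$.

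The only delicate point is the count $O(r/s)$ at each scale: one has to justify that the overlap of the boundary strips is negligible and that the curvature of the circles is innocuous on scales $s\le\de\le r$. This is handled by covering each strip by $O(r/s)$ disks of radius $s$, noting each disk touches only $O(1)$ Whitney cubes at that scale; the curvature is irrelevant because the strip-width is uniform. No other step presents difficulty; the entire argument is a Fubini-style replacement of $\sum_Q|Q|^t$ with $\int_\Om \mbox{dist}(z,\partial\Om)^{t-2}\,dA(z)$, which on the annulus integrates to the same quantity.
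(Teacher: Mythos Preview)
The paper states this lemma without proof (unlike the two preceding lemmas, it is not attributed to \cite{Bishop2018}; it is a standard Whitney-sum computation). Your argument is the canonical one and is correct: group the Whitney cubes by dyadic side length $s$, use the Whitney condition to see $s\lesssim\de$, observe that the cubes at scale $s$ lie in a strip of width $O(s)$ along a boundary of total length $O(r)$ and hence number $O(r/s)$, and then sum the geometric series $r\sum_{s\lesssim\de}s^{t-1}$.

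The bound you actually derive is $O\big(\tfrac{1}{t-1}\,r\,\de^{t-1}\big)$ for $t>1$, and you correctly flag that this is not literally the stated $O\big(\tfrac{1}{t}\,r^{t}\,\de^{t-1}\big)$. Your reading is the right one: the $r$ can be loosened to $r^{t}$ whenever $r\ge1$ and $t\ge1$, and the honest $t$-dependent constant is $\tfrac{1}{t-1}$, not $\tfrac{1}{t}$ (the latter would be false as $t\downarrow1$). For the packing-dimension application in the next theorem only the exponent $t-1$ on $\de$ is relevant, so the discrepancy in the constant is harmless; the form written in the paper is simply a looser packaging of the same estimate.
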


\begin{theorem}
For $f$ satisfying the above hypothesis, one has $\mbox{Pdim}(\mathcal{J}(f))=1$.
\end{theorem}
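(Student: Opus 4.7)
The lower bound $\mbox{Pdim}(\mathcal{J}(f))\geq 1$ is immediate: by Baker's result recalled in the introduction, the Julia set of any transcendental entire function contains a non-trivial continuum, so $\mbox{dim}(\mathcal{J}(f))\geq 1$, and packing dimension always dominates Hausdorff dimension. So the whole task is to prove $\mbox{Pdim}(\mathcal{J}(f))\leq 1$, and for this I plan to use the characterization of Lemma \ref{equest-2}: it is enough to exhibit a countable closed cover $\mathcal{J}(f)=\bigcup_j K_j$ with $\overline{\mbox{Mdim}}(K_j)\leq 1$ for every $j$. A natural choice is $K_j:=\mathcal{J}(f)\cap\overline{D(0,M_j)}$ with $M_j\to\infty$. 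Since the Fatou set is dense in the plane (every open disk in $A_k$ eventually escapes into $B_k$) and $\mathcal{J}(f)$ has empty interior, $\mathcal{J}(f)$ has zero planar measure locally, so Lemma \ref{equest-3} reduces the problem to showing that for every $s>1$ and every $r<\infty$ the Whitney sum
\beqq
\Sigma_s(r):=\sum_{Q\in\mathcal{W}(\mathcal{F}(f)\cap D(0,r))}|Q|^{s}
\eeqq
is finite.

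My next step is to apply Lemma \ref{equest-5} to decompose this Whitney sum along the countable collection of Fatou components of $f$ lying inside $D(0,r)$. From the shape-of-components section, these components come in three families: (i) the distinguished components $\Omega_k$ carrying the inner boundary of $A_k$, for the finitely many $k$ with $R_k\leq r$; (ii) preimage components inside the petal sets $R_j\cdot\Omega^p_{n_j}$, each one a branched image of some $\Omega_{k'}$ under an iterate $f^n$; and (iii) the finitely many components inside $D_1$, which are pulled back under the hyperbolic map $F_0$ and so only contribute boundedly. Because $f$ restricted to a petal is conformal onto an annular sector (outside a single critical center), Lemma \ref{equest-6} will let me compare the Whitney sum over each preimage component with a constant multiple of the Whitney sum over the distinguished component it covers, up to a factor equal to an appropriate power of the contraction ratio.

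To estimate the Whitney sum of a single $\Omega_k$, I will use that by Lemma \ref{packone} its boundary is a union of $C^1$ Jordan curves of locally finite $1$-measure, the outermost curve $\gamma_k$ being a small $C^1$ perturbation of the circle $\{|z|=2R_k\}$, with further inner curves inside each petal level $R_j\cdot\Omega^p_{n_j}$ for $j\geq k$. Lemma \ref{equest-7} applied to a thin annular neighborhood of width $\delta$ around a circle of radius $\rho$ gives a Whitney $s$-sum of order $\delta^{s-1}\rho^{s}/(s-1)$; combined with rectifiability, this will bound the contribution from the neighborhood of each boundary curve by a constant times (diameter)$^{s-1}\cdot($length$)$, or more crudely by (diameter)$^{s}$ times a uniform constant. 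The self-similar recursion at the petal level $j$ then multiplies in a factor bounded by $m_j$ (from the $m_j$-to-$1$ branched cover $f:A_j\to A_{j+1}$) against a contraction ratio of order $R_j/R_{j+1}$.

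The hard part will be proving that the resulting multi-indexed geometric series converges for every $s>1$. After unfolding the tree of preimages, the $s$-Whitney sum reduces to a sum of the form $\sum_{k,(j_i)} \prod_i m_{j_i}\cdot(R_{j_i}/R_{j_i+1})^{s}$ where $(j_i)$ ranges over finite strictly increasing sequences of petal levels visited. The doubly exponential growth $R_{j+1}\geq 4R_j^2$ from \eqref{inequ-1}, together with the logarithmic upper bounds on $m_j$ from \eqref{inequ-19}--\eqref{inequ-20}, mirrors the convergent series already handled in the proof of Lemma \ref{smalldim-1}: the ratio-test argument there (used to establish \eqref{inequ-24}) carries over with $\alpha$ replaced by any $s>1$ and in fact gives much faster convergence. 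Summing the resulting estimates over all $k$ with $R_k\leq r$ then yields $\Sigma_s(r)<\infty$, so $\overline{\mbox{Mdim}}(K_j)\leq 1$, and Lemma \ref{equest-2} delivers $\mbox{Pdim}(\mathcal{J}(f))\leq 1$, completing the proof.
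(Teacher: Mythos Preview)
Your approach is essentially the same as the paper's: reduce via Lemma~\ref{equest-2} to bounding the upper Minkowski dimension of bounded pieces of $\mathcal{J}(f)$, then via Lemma~\ref{equest-3} to Whitney sums over the Fatou set, and estimate these using Lemmas~\ref{equest-5}--\ref{equest-7} together with the Fatou-component geometry and the growth/degree bounds \eqref{inequ-1}, \eqref{inequ-19}--\eqref{inequ-20}. The paper's own proof is in fact just a pointer to the identical argument in \cite[Theorem~20.3]{Bishop2018}, so your sketch is a faithful (and more explicit) rendering of that route.

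One small correction: the step ``$\mathcal{J}(f)$ has empty interior, so $\mathcal{J}(f)$ has zero planar measure locally'' is not a valid implication in general. The zero-measure input needed for Lemma~\ref{equest-3} follows instead from the structural decomposition already established: $\mathcal{J}(f)=\widetilde{E}\cup Y\cup Z$, where $\widetilde{E}$ and $Y$ have Hausdorff dimension $<1$ (Lemma~\ref{smalldim-1} and the choice of $\lambda$) and $Z$ has locally finite $1$-measure (Lemma~\ref{packone}), so $\mathcal{J}(f)$ has zero $2$-dimensional measure. Also note that $f$ restricted to a petal is $l_k$-to-$1$, not conformal; the bi-Lipschitz comparison of Lemma~\ref{equest-6} still applies after passing to suitable simply connected pieces, exactly as in Bishop's argument.
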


\begin{proof}
The packing dimension is given by the upper Minkowski dimension of bounded pieces of the Julia set by Lemma \ref{equest-2}. The upper Minkowski dimension can be estimated by Lemma \ref{equest-3} and the Whitney decomposition.

By applying similar arguments used in the proof of Theorem 20.3 in \cite{Bishop2018},  Lemmas \ref{equest-5}---\ref{equest-7}, and the shape of the Fatou set, we could obtain this conclusion.
\end{proof}

\section*{Acknowledgments}
The authors appreciate Prof. Christopher Bishop, whose continuous encouragement and support made it possible to finish this work. The authors also appreciate the anonymous
reviewer for pointing out some mistakes on the application of the Koebe distortion theorem in the conformal estimates and valuable comments.

\baselineskip=2pt


\begin{thebibliography}{10}

\bibitem{Baker1975}
I. N. Baker, The domains of normality of an entire function, Ann. Acad. Sci. Fenn. Ser. A Math 1 (1975) 277--283.

\bibitem{Baker1976}
I. N. Baker, An entire function which has wandering domains, J. Aust. Math. Soc. A, 22 (1976) 173--176.

\bibitem{BaranskiKarpinskaZdunik2009}
K. Bara\'{n}ski, B. Karpi\'{n}ska, A. Zdunik, Hyperbolic dimension of Julia sets of meromorphic maps with logarithmic tracts, Int. Math. Res. Not. IMRN, 4 (2009) 615--624.

\bibitem{Baumgartner2015} M. Baumgartner, \"{U}er R\"{a}nder von mehrfach zusammenh\"{a}ngenden wandernden Gebieten PhD thesis, Christian-Albrechts-Universit\"{a}t zu Keil, 2015

\bibitem{Bergweiler2012}
W. Bergweiler, On the packing dimension of the Julia set and the escaping set of an entire function, Israel J. Math., 192 (2012) 449--472.

\bibitem{Bergweiler2017}
W. Bergweiler, Lebesgue measure of Julia sets and escaping sets of certain entire functions,

\bibitem{BergweilerHinkkanen1999} W. Bergweiler, A. Hinkkanen, On semiconjugation of entire functions, Math. Proc. Camb. Philos. Soc., 126 (1999) 565--574.

\bibitem{BergweilerKarpinska2010}
W. Bergweiler, B. Karpi\'{n}ska, On the Hausdorff dimension of the Julia set of a regularly growing entire function, Math. Proc. Cambridge Philos. Soc., 148 (2010) 531--551.

\bibitem{BergweilerKarpinskaStallard2009}
W. Bergweiler, B. Karpi\'{n}ska, G. M. Stallard, The growth rate of an entirefunction and the Hausdorff dimension of its Jula set, J. London Math. Soc., 80 (2009) 680--698.

\bibitem{Bishop2018}
C. J. Bishop, A transcendental Julia set of dimension 1, Invent. Math., 212 (2018) 407--460.

\bibitem{BishopPeres2017}
C. J. Bishop, Y. Peres, Fractals in Probability and Analysis, Cambridge Studies in Advanced Mathematics, vol. 162, Cambridge University Press, Cambridge, 2017.

\bibitem{Burkart2019}
J. Burkart, Transcendental Julia sets with fractional packing dimension, Conform. Geom. Dyn., 25 (2021) 200--252.

\bibitem{Eremenko1989}
A. \`{E}. Er\"{e}menko, On the iteration of entire functions. In: Dynamical Systems and Ergodic Theory (Warsaw, 1986), vol 23 of Banach Center Publication. pp 339--345. PWN, Warsaw 1989.

\bibitem{EremenkoLyubich1987}
A. \`{E}. Er\"{e}menko, M. Yu. Lyubich, Examples of entire functions with pathological dynamics, J. Londong Math. Soc., 16 (1987) 458--468.


\bibitem{Falconer1990}
K. J. Falconer, Fractal Geometry. Mathematical Foundations and Applications. John Wiley \& Sons, Chichester, 1990.

\bibitem{GarnettMarshall2005}
 J. B. Garnett, D. E. Marshall, Harmonic measure, volume 2 of New Mathematical Mono-
graphs. Cambridge University Press, Cambridge, 2005.

\bibitem{McMullen1987}
C. McMullen, Area and Hausdorff dimension of Julia sets of entire functions,
Trans. Amer. Math. Soc., 300 (1987) 329--342.

\bibitem{McMullen1994}
C. T. McMullen, Complex Dynamics and Renormalization,
Annals of Mathematics Studies No. 135, Princeton Univ. Press, Princeton, New Jersey, 1994.

\bibitem{McMullen1998}
C.~T. McMullen.
\newblock Self-similarity of {S}iegel disks and {H}ausdorff dimension of
  {J}ulia sets.
\newblock {\em Acta Math.}, 180:247--292, 1998.

\bibitem{Milnor2006}
J.~Milnor.
\newblock {\em Dynamics in One Complex Variable (Third Edition)}.
\newblock Annals of Mathematics Studies No. 160. Princeton Univ. Press,
  Princeton and Oxford, 2006.

\bibitem{Misiurewicz1981}
M. Misiurewicz, On iterates of $e^z$, Ergod. Theory Dyn. Syst., 1 (1981) 103--106.

\bibitem{Morosawa2000}
S. Morosawa, Y. Nishimura, M. Taniguchi, T. Ueda, Holomorphic Dynamics, Cambridge University Press, Cambridge, 2000.

\bibitem{RempeSixsmith2017} 
L. Rempe-Gillen, D. Sixsmith, Hyperbolic entire functions and the Eremenko-Lyubich class Class   $\mathcal{B}$  or not class   $\mathcal{B}$?  Mathematische Zeitschrift,  286 (2017) 783--800.


\bibitem{RempeStallard2010} L. Rempe-Gillen, G. M. Stallard, Hausdorff dimensions of escaping sets of transcendental entire functions, Proc. Amer. Math. Soc., 138 (2010) 1657--1665.




\bibitem{RipponStallard2005}
P. J. Rippon, G. M. Stallard, Dimensions of Julia sets of meromorphic functions, J. Lond. Math. Soc., 71 (2005) 669--683.

\bibitem{RipponStallard2005b}
P. J. Rippon, G. M. Stallard, On questions of Fatou and Eremenko, Proc. Amer. Math. Soc., 133 (2005) 1119--1126.

\bibitem{RipponStallard2013}
P. J. Rippon, G. M. Stallard, Baker's conjecture and Eremenko's conjecture for functions with negative zeros, Journal d'Analyse Math\'{e}matique, 120 (2013) 291--309.

\bibitem{RottRuckRempSchl2011}
G. Rottenfusser, D. J. R\"{u}ckert, L. Rempe, D. Schleicher, Dynamic rays of bounded-type entire functions, Ann. Math., 173 (2011) 77--125.


\bibitem{Shishikura1998}
M.~Shishikura.
\newblock The {H}ausdorff dimension of the boundary of the {M}andelbrot set and
  {J}ulia sets.
\newblock {\em Ann. of Math.}, 147:225--267, 1998.

\bibitem{Sixsmith2015} D. J. Sixsmith, Functions of genus zero for which the fast escaping set has Hausdorff dimension two, Proc. Amer. Math. Soc., 143 (2015) 2597--2612.


\bibitem{Stallard1996} G. M. Stallard, The Hausdorff dimension of Julia sets of entire functions II, Math. Proc. Camb. Philos. Soc., 119 (1996) 513--536.

\bibitem{Stallard1997} G. M. Stallard, The Hausdorff dimension of Julia sets of entire functions III, Math. Proc. Camb. Philcs. Soc., 122 (1997) 223--244.

\bibitem{Stallard2000} G. M. Stallard, The Hausdorff dimension of Julia sets of entire functions IV, J. Lond. Math. Soc., 61 (2000) 471--488.


\bibitem{Stein1970} E. M. Stein, Singular Integrals and Differentiability Properties of Functions, Princeton Univ. Press, Princeton, New Jersey, 1970.





\end{thebibliography}
\end{document}